\newcommand{\R}{\mathbb{R}}
\newcommand{\N}{\mathbb{N}}
\newcommand{\dist}{\text{\rm dist}}
\newcommand{\supp}{\text{\rm supp}}
\newcommand{\id}{\mathrm{Id}}
\newcommand{\ve}{\varepsilon}
\newcommand{\cI}{\mathcal{I}}
\newcommand{\T}{\mathcal{T}}
\renewcommand{\L}{\mathcal{L}}
\newcommand{\CD}{\mathsf{CD}}
\newcommand{\Geo}{{\rm Geo}}
\newcommand{\TGeo}{{\rm TGeo}}
\newcommand{\MCP}{\mathsf{MCP}}
\newcommand{\Ent}{{\rm Ent}}
\newcommand{\Dom}{{\rm Dom}}
\newcommand{\fs}{\mathfrak{s}}
\newcommand{\fc}{\mathfrak{c}}
\newcommand{\fI}{\mathfrak{I}}
\newcommand{\fa}{\mathfrak{a}}
\newcommand{\fb}{\mathfrak{b}}
\newcommand{\mui}{\mu_\infty}
\newcommand{\rmC}{{\mathrm C}}
\newcommand{\Cb}[1]{\rmC_b(#1)}
\newcommand{\Prob}{\mathcal P}
\newcommand{\BB}{\mathscr{B}}
\newcommand{\BorelSets}[1]{\BB(#1)}
\newcommand{\Ric}{{\rm Ric}}
\renewcommand{\L}{\mathcal{L}}
\newcommand{\vol}{\mathrm{Vol}}
\newcommand{\TMCP}{\mathsf{TMCP}}
\newcommand{\TCD}{\mathsf{TCD}}
\newcommand{\wTCD}{\mathsf{wTCD}}
\newcommand{\mm}{\mathfrak m}
\newcommand{\qq}{\mathfrak q}
\newcommand{\ee}{{\rm e}}
\newcommand{\QQ}{\mathfrak Q}
\newcommand{\sfd}{\mathsf d}
\newcommand{\forevery}{\text{for every }}
\theoremstyle{plain}
\newtheorem{lemma}{Lemma}[section]
\newtheorem{theorem}[lemma]{Theorem}
\newtheorem{proposition}[lemma]{Proposition}
\newtheorem{corollary}[lemma]{Corollary}
\newtheorem*{theorem*}{Theorem}
\newtheorem*{maintheorem*}{Main Theorem}
\theoremstyle{definition}
\newtheorem{definition}[lemma]{Definition}
\newtheorem*{definition*}{Definition}
\newtheorem{remark}[lemma]{Remark}
\newtheorem{example}[lemma]{Example}
\numberwithin{equation}{section}
\title[An isoperimetric-type inequality in Lorentzian spaces]{A sharp isoperimetric-type inequality for Lorentzian spaces satisfying timelike Ricci lower bounds}
\author{Fabio Cavalletti}
\address{Department of Mathematics, 
University of Milan, Milan (Italy)}
\email{fabio.cavalletti@unimi.it} 
\author{Andrea Mondino}
\address{Mathematical Institute,  University of Oxford  (UK)} 
\email{andrea.mondino@maths.ox.ac.uk}
\date{\today}     
\begin{document}

\begin{abstract}
The paper establishes a sharp and rigid isoperimetric-type inequality in Lorentzian signature under the assumption of Ricci curvature bounded below in the timelike directions. The inequality is proved in the high generality of Lorentzian pre-length spaces satisfying timelike Ricci lower bounds in a synthetic sense via optimal transport, the so-called $\TCD^e_p(K,N)$ spaces.  The results are new already for smooth Lorentzian manifolds. Applications include an upper bound on the area of  achronal hypersurfaces inside the interior of a black hole (original already in Schwarzschild) and an upper bound on the area of achronal hypersurfaces in cosmological spacetimes.
\end{abstract}

\maketitle

\setcounter{tocdepth}{1}
\tableofcontents

\section{Introduction}

\subsection*{Brief account on the isoperimetric problem in Riemannian signature}

The isoperimetric problem is one of the most classical problems in Mathematics, having its roots in the Greek legend of Dido, queen of Carthage. In \emph{Riemannian} signature, it amounts to answer the following question: 
\begin{equation*}
\text{``What is the maximal volume that can be enclosed by a given area?"}
\end{equation*}
Equivalently, it can be stated as the problem of finding the \emph{maximal} function $I_{(M,g)}(\cdot):[0,\infty)\to [0,\infty)$ such that for every subset $E\subset M$ with smooth boundary $\partial E$ in the $(n+1)$-dimensional Riemannian manifold $(M^{n+1},g)$, then
\begin{equation}\label{eq:IsopIneqRiem}
{\rm Vol}_g^{n}(\partial E)\geq I_{(M,g)}({\rm Vol}_g^{n+1}(E)),
\end{equation}
where ${\rm Vol}_g^{n+1}(E)$ (resp.\;${\rm Vol}_g^{n}(\partial E)$) denotes the $(n+1)$-dimensional measure of $E$ with respect to $g$ (resp. the $n$-dimensional measure of $\partial E$ with respect to the restriction of $g$).

The literature on the isoperimetric problem in Riemannian signature is highly extensive (see, for instance, \cite{Osserman, Ros}). Even in Euclidean spaces, the complete solution is relatively recent and required several significant breakthroughs. In the broader context of sets with finite perimeter, a complete proof was established by De Giorgi \cite{DeGiorgi} (refer to \cite{DeGiorgi2} for the English translation), following a long series of noteworthy intermediate results. It is worth mentioning Steiner \cite{Steiner}, who introduced a now-classical symmetrization technique that now bears his name, with this objective in mind.  Quantitative versions of the Euclidean isoperimetric inequality have been established by Fusco-Maggi-Pratelli \cite{FuMP}, Figalli-Maggi-Pratelli \cite{FiMP}, Cicalese-Leonardi \cite{CiLe}.

In the framework of Riemannian manifolds with Ricci curvature bounded below, an isoperimetric inequality in the form \eqref{eq:IsopIneqRiem} (with the function $I_{(M,g)}(\cdot)$ here depending only on the dimension and on the Ricci lower bound) 
was proved by Gromov \cite{Gromov} in case of positive Ricci lower bound, 
following a previous work by L\'evy \cite{Levy}.
After several contributions,
the case of bounded diameter and a general Ricci lower bound 
was established in a sharp form by E.\;Milman \cite{MilmanJEMS}
to which we refer for a complete account on the bibliography.  
The L\'evy-Gromov-Milman isoperimetric inequality was extended to non-smooth metric measure spaces, together with an analysis of the rigidity and stability questions, by the authors \cite{CM1}; a quantitative version was obtained by the authors with Maggi \cite{CaMaMo}. Moreover, in the unbounded case of complete Riemannian manifolds with non-negative Ricci curvature and positive asymptotic volume ratio, a sharp isoperimetric inequality was proved by Agostiniani-Fogagnolo-Mazzieri \cite{AFM-Inv} in dimension three and by Brendle \cite{Brendle-CPAM} in any dimension. Their isoperimetric inequality was extended to non-smooth metric measure spaces by Balogh-Krist\'aly \cite{BK-MathAnn}; the analysis of the equality case with the associated rigidity was performed by Antonelli-Pasqualetto-Pozzetta-Semola \cite{APPS-MathAnn} and  Manini with the first named author \cite{CaMa-JEMS}.

Isoperimetric bounds of the type \eqref{eq:IsopIneqRiem} have proven to be extremely influential in mathematical general relativity, for initial data sets (hence still in  Riemannian signature). Let us mention the Riemannian Penrose inequality \cite{Huisken-Ilmanen, Bray:PenroseIneq}, and the concept of isoperimetric mass introduced by Huisken \cite{Huisken, HuiskenVideo}.

\subsection*{Brief account on the isoperimetric problem in Lorentzian signature}
If $(M^{n+1},g)$ is a \emph{Lorentzian} manifold, the \emph{maximal} function $I_{(M,g)}(\cdot):[0,\infty)\to [0,\infty)$ satisfying \eqref{eq:IsopIneqRiem} is identically $0$ - at least for small volumes and, in several examples (including Minkowski spacetime), for all volumes. The reason is that the causal diamonds have positive $(n+1)$-volume, but their boundary is a null hypersurface (with singularities of negligible measure) with zero $n$-volume, with respect to the restriction of the ambient Lorentzian metric.

Indeed, due to the different signature, a geometric \emph{minimization} problem in \emph{Riemannian} signature is turned into a \emph{maximization} problem in \emph{Lorentzian} signature. A landmark example is given by geodesics, which locally \emph{minimize} length in \emph{Riemannian} signature, and instead locally \emph{maximize} time separation (i.e.\;Lorentzian length) in \emph{Lorentzian} signature.  An analogous phenomenon appears for the isoperimetric problem which, in a first approximation, should be phrased as:
\begin{equation*}
\text{``What is the maximal  area that can be used to enclose a given volume?"}
\end{equation*}
Equivalently, it can be stated as the problem to find the \emph{minimal} function $J_{(M,g)}(\cdot):[0,\infty)\to [0,\infty)$ such that for every subset $E\subset M$ with smooth boundary $\partial E$ in the $(n+1)$-dimensional Lorentzian manifold $(M^{n+1},g)$, then
\begin{equation}\label{eq:IsopIneqLor}
{\rm Vol}_g^{n}(\partial E)\leq J_{(M,g)}({\rm Vol}_g^{n+1}(E)),
\end{equation}
where ${\rm Vol}_g^{n+1}(E)$ (resp.\;${\rm Vol}_g^{n}(\partial E)$) denotes the $(n+1)$-dimensional measure of $E$ with respect to $|g|$ (resp. the $n$-dimensional measure of $\partial E$ with respect to the restriction of $|g|$).

In sharp contrast to the Riemannian signature, where the literature on the isoperimetric problem is extensive, the literature on the isoperimetric problem in Lorentzian signature is more limited. To the best of our knowledge, the following are all the results in the literature at the moment:
\begin{itemize}
\item Bahn-Ehrlich \cite{BE99} in 1999 provided an upper bound on the area of a compact spacelike achronal hypersurface $S$ contained in the future of a point $O$ in the $(n+1)$-dimensional Minkowski spacetime, in terms of the volume (raised to the appropriate power to obtain scale-invariance) of a suitably constructed past cone $C(S)$ with base point $O$:
\begin{equation}\label{eq:IsopIneqMink}
{\rm Vol}_g^{n}(S) \leq C(n)\; ({\rm Vol}_g^{n+1}(C(S)))^{n/(n+1)}.
\end{equation}
\item Bahn \cite{Bahn99} in 1999 obtained a similar inequality to \cite{BE99} for two-dimensional Lorentzian surfaces with Gaussian curvature bounded from above.
\item Abedin-Corvino-Kapita-Wu \cite{ACKW09} in 2009 generalized the isoperimetric inequality of \cite{BE99} to spacetimes $I \times \mathbb{H}^{n}$ with a warped metric $g = -(dt)^{2} + a(t)^{2}g_{\mathbb{H}^{n}}$ satisfying $a''\leq 0 $. This corresponds to a subclass of Friedman-Robertson-Walker spacetimes satisfying the strong energy condition $\Ric(v,v)\geq 0$ for timelike vectors $v$.
\item Lambert-Scheuer \cite{LambSche} in 2021 extended \cite{ACKW09} to spacetimes $N = (a,b)\times S_{0}$ with metric $g = - dr^{2} + \theta(r)^{2} \hat{g}$ satisfying the null convergence condition and with $(S_{0},\hat{g})$ compact. The relation between the area and the volumes is as follows: if $\Sigma \subset N$ is a spacelike, compact, achronal, and connected hypersurface, then
$$
{\rm Vol}_g^{n}(\Sigma) \leq \varphi({\rm Vol}_g^{n+1}(C(\Sigma))),
$$
where $C(\Sigma)$ is the region between $\Sigma$ and ${a } \times S$ and $\varphi$ is the function which gives equality on the coordinate slices.
\end{itemize}

We also mention the work of Tsai-Wang \cite{TsaiWang} in 2020 which established an isoperimetric-type inequality for maximal, spacelike submanifolds in Minkowski space. With a slightly different perspective, Graf-Sormani in \cite{GrafSormani} have recently improved on Truede-Grant \cite{TreudeGrant} by establishing upper bounds on the Lorentzian area and volume of slices of the time-separation function from a Cauchy hypersurface in terms of its mean curvature.

Arguably, two of the main motivations for such a short bibliography 
compared with the Riemannian case are: 
\begin{itemize}
\item  the a-priori lack of  regularity for the isoperimetric problem for general domains, due to the failure of  ellipticity caused by the Lorentzian signature of the ambient metric; nevertheless, let us mention the deep work by Bartnik \cite{Bartnik-ACTA} on the regularity of variational maximal surfaces. 
We bypass the regularity issues by adopting an optimal transport approach which does not assume any regularity of the subsets;
\item there are deep geometric differences between the isoperimetric problem in Lorentzian and in Riemannian signature, in addition to the aforementioned maximization versus minimization nature. In the Riemannian case, the isoperimetric profile function $I_{(M^{n+1},g)}$ as in \eqref{eq:IsopIneqRiem} typically satisfies 
\begin{equation}\label{eq:IsopRiemVto0}
I_{(M^{n+1},g)}(v)=O(v^{n/(n+1)})\to 0,\quad \text{ as $v\to 0$},
\end{equation}
as it easily follows by choosing small geodesic spheres as competitors.
For instance, the Euclidean isoperimetric inequality states that
\begin{equation}\label{eq:IsopRn}
I_{\R^{n+1}}(v)= C(n)\,  v^{n/(n+1)}, \quad \text{for all }v>0.
\end{equation}
In sharp contrast, in Lorentzian signature, one cannot expect the profile function $J$ as in \eqref{eq:IsopIneqLor} to have a power-like expression as in \eqref{eq:IsopRn} for general subsets even in very symmetric spaces,  \emph{without further assumptions on the geometry of the subsets} (for instance, in  \eqref{eq:IsopIneqMink} only conical subsets in Minkowski spacetime are considered).
Even the mild asymptotic property \eqref{eq:IsopRiemVto0} for small volumes fails dramatically for general subsets. Indeed, in general, $J(v)$ \emph{does not tend to zero}, as $v\to 0$; see  Example \ref{Example:1} below.

Compared with the aforementioned Lorentzian references, our approach will be markedly different: instead of constraining the geometry of the subsets, we will consider an extra term in the Lorentzian isoperimetric inequality, keeping track of the ``time-separation" of the subset.
\end{itemize}

\begin{example}\label{Example:1}
    Let $(M^{n+1},g)$ be a globally hyperbolic spacetime admitting compact Cauchy hypersurfaces (this is not strictly needed, but it simplifies the discussion for the sake of an example).  Let $V\subset M$ be a spacelike compact Cauchy hypersurface, of positive $n$-dimensional volume (with respect to the  Riemannian metric given by the restriction of $g$ to $V$). Let $S_j$ be a sequence of spacelike compact Cauchy hypersurfaces in the future of $V$, and converging smoothly to $V$ as $j\to \infty$. Denote by $C(V, S_j)$ the region between $V$ and $S_j$. Then it is readily seen that 
$${\rm Vol}_g^{n}(S_j) \to {\rm Vol}_g^{n}(V)>0, \quad   {\rm Vol}_g^{n+1}(C(V,S_j))\to 0, \quad \text{as } j\to \infty.$$
In particular,  $J(v)\not\to 0$ as $v\to 0$.
\end{example}

\subsection*{Main results of the paper}

The results will be proved under very low regularity assumptions both on the spacetime and on the subsets, namely in the framework of Lorentzian pre-length spaces satisfying timelike Ricci curvature lower bounds in a synthetic sense via optimal transport, the so-called $\TCD^e_p(K,N)$ spaces. 
\\The setting of Lorentzian pre-length spaces was introduced by Kunzinger-S\"amann \cite{KS} building on the notion of causal spaces pioneered by Kronheimer-Penrose \cite{CausalSpace}. The framework comprises Lorentzian manifolds with metrics of low regularity; namely, locally Lipschitz Lorentzian metrics and, more generally, continuous causally plain Lorentzian metrics, studied by Chru\'sciel-Grant \cite{CG}.
An optimal transport characterization of Ricci curvature lower bounds in the timelike directions for smooth Lorentzian manifolds was obtained by McCann \cite{McCann} and by Mondino-Suhr \cite{MoSu}. The theory of  $\TCD^e_p(K,N)$ spaces has been developed by the authors of the present paper in \cite{CaMo:20}; see also the related work by Braun \cite{Braun} and the survey \cite{CaMo:22}. 
\smallskip

For the sake of the introduction, the statements will be presented in a simplified setting (both on the ambient spacetime and on the subsets in consideration), referring to the body of the paper for the more general case. Let us stress that the results seem to be original already in the simplified form below.
\smallskip

Let $(M^{n+1},g)$ be a smooth, globally hyperbolic,  Lorentzian manifold. Denote by $\ll$ the chronological relation: for $x,y\in M$, we say that $x\ll y$ if there exists a Lipschitz timelike curve from $x$ to $y$.
Let $\tau:M\times M\to [0,\infty]$ be the time-separation function on $M$ defined by
$$
\tau(x,y):=
\begin{cases} & \sup\{ L_g(\gamma) \mid \gamma:I\to M \; \text{ timelike Lipschitz curve}\}, \quad \text{if } x\ll y, \\
& 0 \qquad \text{otherwise},
\end{cases}
$$
where 
$$
{\rm L}_g(\gamma):=\int_I \sqrt{|g(\dot{\gamma}_t, \dot{\gamma}_t)|} \, {\rm d}t
$$
is the length of the timelike Lipschitz curve $\gamma:I\to M$.

The time-separation function satisfies the reverse triangle inequality (on timelike triples) and should be thought of as a Lorentzian counterpart of the distance function in Riemannian geometry.
 Given a subset $V\subset M$, we denote its chronological future by $I^{+}(V)$:
$$I^+(V):=\{y\in M\mid \exists x\in V \text{ such that } x\ll y\}.$$
The time-separation function from $V$ is defined by
$$
\tau_V:I^+(V)\to (0,\infty], \quad \tau_V(x):=\sup_{y\in V} \tau(y,x),
$$
and it should be thought as the Lorentzian distance from $V$. 
In the body of the paper $\tau_V$ will also be defined, adopting the analogous notation, over the chronological past of $V$ as well.

Let $V\subset M$ be a Cauchy hypersurface in $M$, and let $S$ be a compact acausal (i.e., no pair of points in $S$ is causally related) Borel subset contained in the chronological future of $V$, i.e.\;$S\subset I^+(V)$.

Define the ``distance" from $V$ to $S$ by
$$
\dist(V,S):=\inf_{x\in S} \tau_V(x)
$$
and consider the geodesically conical region from $V$ to $S$ defined by
\begin{equation*}
C(V,S):=\{\gamma_t\mid t\in [0,1], \text{ such that } \gamma_0\in V,\, \gamma_1\in S,\, L_g(\gamma)= \tau_{V}(\gamma_1)\}, 
\end{equation*}
i.e., $C(V,S)$ is the region spanned by timelike geodesics from $V$ to $S$, realizing $\tau_{V}$.
We can now state the main result of the paper (in a simplified form, for the sake of the introduction).

\begin{theorem}[A sharp isoperimetric-type inequality]\label{Thm:IsopIneqIntro}
Let $(M^{n+1},g)$ be a globally hyperbolic Lorentzian manifold satisfying Hawking-Penrose's strong energy condition (i.e., $\Ric\geq 0$ on timelike vectors) and  let $V \subset M$ be  Cauchy hypersurface. 
Then for any compact and acausal hypersurface $S\subset I^+(V)$  the following inequality is valid
$$
{\rm Vol}_g^n(S) \; \dist(V,S)\leq (n+1) \; {\rm Vol}_g^{n+1}(C(V,S)).
$$
\end{theorem}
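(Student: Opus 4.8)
The plan is to foliate the conical region $C(V,S)$ by the maximizing timelike geodesics joining $V$ to $S$, reduce the inequality to a one-dimensional convexity estimate along each geodesic ``needle'' — the estimate encoding the strong energy condition via the Raychaudhuri equation — and reassemble by the area formula. If $\dist(V,S)=0$ the inequality is trivial, so assume $\dist(V,S)>0$; then each $x\in S$ satisfies $\tau_V(x)\ge\dist(V,S)>0$ and, by global hyperbolicity (compactness of $J^+(V)\cap J^-(x)$), $\tau_V(x)$ is finite and realized by a past-directed unit-speed maximizing timelike geodesic $\gamma_x$ from $x$ to a point of $V$, of length $\ell_x:=\tau_V(x)$. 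Let $S'\subseteq S$ be the set of $x$ admitting a \emph{unique} such maximizer; its complement is ${\rm Vol}_g^n$-negligible. Set $\Psi(x,r):=\gamma_x(r)$ for $x\in S'$, $r\in[0,\ell_x]$: since a maximizing geodesic from $V$ to $S$ is determined by any of its interior points, $\Psi$ is injective, and one checks it parametrizes $C(V,S)$ up to a ${\rm Vol}_g^{n+1}$-null set. Denoting by $\mathfrak J(x,r)\ge 0$ the Jacobian of $\Psi$ (with respect to $d{\rm Vol}_g^n\llcorner S$ in $x$ and Lebesgue measure in $r$), the area formula gives
$$
{\rm Vol}_g^{n+1}(C(V,S))=\int_{S'}\int_0^{\ell_x}\mathfrak J(x,r)\,dr\,d{\rm Vol}_g^n(x).
$$

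Next I would analyze $\mathfrak J$ along each ray. At $r=0$ the differential of $\Psi$ is the identity on $T_xS$ and sends $\partial_r$ to $\dot\gamma_x(0)=-\nabla\tau_V(x)$, whence $\mathfrak J(x,0)=|\langle\nabla\tau_V(x),\nu(x)\rangle_g|=:c(x)$ with $\nu(x)$ the future unit normal of $S$; since $\nabla\tau_V(x)$ and $\nu(x)$ are both unit timelike, the reverse Cauchy--Schwarz inequality gives $c(x)\ge 1$. Along each ray the congruence $\{\gamma_{x'}\}_{x'\in S'}$ is hypersurface-orthogonal — it is a gradient flow of $\tau_V$ — hence vorticity-free, and the Raychaudhuri equation together with $\Ric\ge 0$ on timelike vectors yields $\theta'\le-\theta^2/n$ for the expansion $\theta(r)=\partial_r\log\mathfrak J(x,r)$, i.e. $r\mapsto\mathfrak J(x,r)^{1/n}$ is concave on $[0,\ell_x]$. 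Therefore $\hat h_x(r):=\mathfrak J(x,r)/c(x)$ is nonnegative, $\hat h_x(0)=1$, and $\hat h_x^{1/n}$ is concave.

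The one-dimensional estimate is then elementary: writing $f:=\hat h_x^{1/n}$, concavity together with $f\ge 0$ and $f(0)=1$ gives $f(r)\ge\frac{\ell_x-r}{\ell_x}f(0)+\frac{r}{\ell_x}f(\ell_x)\ge 1-\frac{r}{\ell_x}$ on $[0,\ell_x]$, so $\int_0^{\ell_x}\hat h_x(r)\,dr\ge\int_0^{\ell_x}(1-r/\ell_x)^n\,dr=\frac{\ell_x}{n+1}$. Substituting into the volume formula and using $c(x)\ge 1$ and $\ell_x=\tau_V(x)\ge\dist(V,S)$,
$$
(n+1)\,{\rm Vol}_g^{n+1}(C(V,S))=\int_{S'}c(x)\,\bigl((n+1)\!\int_0^{\ell_x}\hat h_x(r)\,dr\bigr)\,d{\rm Vol}_g^n(x)\ \ge\ \int_{S'}c(x)\,\ell_x\,d{\rm Vol}_g^n(x)\ \ge\ \dist(V,S)\,{\rm Vol}_g^n(S),
$$
which is the assertion. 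Tracing equality forces $c\equiv1$ (the rays meet $S$ orthogonally), $\tau_V\equiv\dist(V,S)$ on $S$ (so $S$ is a ``geodesic sphere''), and $\hat h_x(r)=(1-r/\ell_x)^n$ (the rays focus at their foot points on $V$) — a cone over a geodesic sphere — which shows the constant $n+1$ is sharp.

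The main obstacle — and the reason the paper is phrased for $\TCD^e_p(K,N)$ Lorentzian pre-length spaces — is that none of the smooth objects above are available there: the differentiable foliation by $\gamma_x$ and the Jacobian $\mathfrak J$ must be replaced by the one-dimensional \emph{localization} of ${\rm Vol}_g^{n+1}\llcorner C(V,S)$ along the $\tau_V$-maximizing rays, the synthetic timelike Ricci bound supplying, after localization, the concavity of the $(N-1)$-st root of the conditional densities on the needles (in the smooth case $N=n+1$, matching the exponent above), and in place of the elementary bound $c(x)\ge 1$ one must compare the quotient measure of this disintegration with ${\rm Vol}_g^n\llcorner S$. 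Setting up the measurable structure of the foliation and controlling the various negligible sets (non-uniqueness of maximizers, the cut locus, branching needles, and the part of $S$ where it fails to be a spacelike $C^1$ hypersurface) is where essentially all of the work lies; the one-dimensional comparison at the heart of the argument is precisely the computation above.
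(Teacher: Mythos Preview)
Your argument is correct and is the smooth-setting version of the paper's proof: the paper's disintegration of $\mm$ along $\tau_V$-rays and the localized $\CD(0,N)$ density bound $h(\alpha,s)\ge (s/b_\alpha)^{N-1}h(\alpha,b_\alpha)$ play exactly the role of your Jacobi-field foliation and Raychaudhuri concavity of $\mathfrak J^{1/n}$, and your reverse Cauchy--Schwarz estimate $c(x)\ge1$ is the smooth content of the paper's comparison $\mm^+(S)\le\int_Q h(\alpha,b_\alpha)\,\qq(d\alpha)$. You also correctly identify where the work lies in the synthetic generality.
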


\begin{remark}
\begin{itemize}
\item As we have reported, the existing literature about isoperimetric-type inequalities in Lorentzian manifolds (or in Riemannian spacelike slices) assumes the metric $g$ to be a warped product.  Recall that  global hyperbolicity implies the existence of a global time function and thus a product structure \emph{as a differentiable manifold}; however, let us stress that  there is no symmetry assumption on the Lorentzian metric in \cref{Thm:IsopIneqIntro}, but merely a lower bound on the Ricci curvature in the timelike directions.
\item \cref{Thm:IsopIneqIntro} is stated for non-negative Ricci curvature just for the sake of simplicity. A completely analogous statement holds for Ricci curvature bounded below by $K\in \R$ in the timelike directions. Also
the assumptions on $V$ and $S$ can be relaxed considerably: it is enough to assume that $V$ is a Borel, achronal, timelike complete subset and that 
\begin{itemize}
\item Either $S$ is a compact acausal Borel set, disjoint from the boundary of $X$ (see \cref{T:isop1} and \cref{cor:IsopSmooth} for the precise statements);
\item Or $S$ is a Borel, achronal (possibly unbounded) subset with empty future $V$-boundary (in the sense of \cref{D:V-boundary}), see \cref{R:otherinequalities}.
\end{itemize}

\item The isoperimetric-type inequality in \cref{Thm:IsopIneqIntro} is sharp (see \cref{prop:SharpnessIsop}) and rigid (see \cref{prop:RigidityIsop}): the equality is attained if and only if the spacetime is conical. 
\end{itemize}
\end{remark}

\noindent
As applications we will establish:
\begin{itemize}
    \item An upper bound on the area of  compact acausal  (or, possibly unbounded, Cauchy) hypersurfaces inside the interior of a black hole, see \cref{rem:AreaBoundCauchyBlackHole}. The bound seems to be new already in the interior of the Schwarzschild black hole, see \cref{Example:SchwInterior}.
    \item An upper bound on the area of compact acausal  (or, possibly unbounded, Cauchy) hypersurfaces in cosmological spacetimes. The novelty with respect to previous results (see for instance \cite{ACKW09, Flaim}) is that no symmetry is assumed; this higher generality seems to have advantages also for applications (see  for instance \cite{ESA}). We refer to \cref{rem:CauchyCosmological} for more details.
\end{itemize}
Let us also mention the next result, establishing a monotonicity formula for the area of the level sets of the distance function from a Cauchy hypersurface.

\begin{theorem}[Area Monotonicity]\label{T:introAreaMon}
Let $(M^{n+1},g)$ be a globally hyperbolic Lorentzian manifold satisfying Hawking-Penrose's strong energy condition (i.e. $\Ric\geq 0$ on timelike vectors). Let $V \subset M$ be a Cauchy hypersurface and let $V_{t}: = \{\tau_{V} = t\}$ be the achronal slice at distance $t>0$ from $V$.
Then the map
\begin{equation}\label{E:monotonicity1intro}
(0,\infty) \ni t \longmapsto \frac{{\rm Vol}_g^n(V_{t})}{t^{n-1}}
\end{equation}
is monotonically non-increasing.
\end{theorem}

\begin{remark}
\begin{itemize}
    \item  In the setting of CMC Einstein flows, a pointwise monotonicity formula 
similar to \eqref{E:monotonicity1intro}  goes back to \cite{FischerMoncrief} and \cite{Anderson}.
There  the spacelike hypersurfaces $\Sigma_{t}$ considered are constant mean curvature compact surfaces parametrized by the Hubble time 
$t = -n/H$. Such monotonicity has then been used to study the convergence as $t \to \infty$ of the metric; we refer to \cite{LottCollapsing}
for more details (see also \cite{LottInitial} for similar result when $t \to 0$).

\item
    \cref{T:introAreaMon} is stated for non-negative Ricci curvature just for the sake of simplicity. A completely analogous statement holds for Ricci curvature bounded below by $K\in \R$ in the timelike directions.
Also the assumption on $V$ can be relaxed considerably: it is enough to assume that $V$ is a Borel, achronal, timelike complete subset. For the general statement, refer to \cref{T:monotonicityVolume}.   

    \item The monotonicity formula for the area \eqref{E:monotonicity1intro} is sharp (see \cref{Rem:SharpMononot}): the equality is attained if and only if the spacetime is conical.
\end{itemize}
\end{remark}

\subsection*{Challenges and new ideas}
It is tempting to compare \cref{Thm:IsopIneqIntro} (and its proof) with the L\'evy-Gromov inequality and, in particular, its proof obtained by the authors \cite{CM1} for non-smooth metric measure spaces with lower Ricci bounds in a synthetic sense, the so-called $\CD^*(K,N)$ spaces. Indeed, both proofs are based on a dimension reduction argument, reducing the proof to $1$-dimensional problems thanks to a disintegration theorem (see \cref{T:disint}) and a localization of the curvature-dimension conditions (see \cref{T:local}). However, the Lorentzian signature poses serious challenges, yielding major differences in the proofs.

Indeed, a key point in the proof of the L\'evy-Gromov inequality in $\CD^*(K,N)$ spaces obtained in \cite{CM1}, was to perform an $L^1$-optimal transportation of mass from a Borel set $E$ of finite measure to its complement (or, more precisely, between their normalized characteristic measures). Such an $L^1$-optimal transportation induces a partition of the ambient space into geodesics (up to a set of measure zero), called \emph{rays}; moreover, thanks to the structure of $L^1$-optimal transportation, all the intersections of such rays with $E$ have the same measure (with respect to suitable weighted 1-dimensional measures, induced by the partition via the disintegration theorem).
This allows to show a lower bound on the perimeter of the Borel set $E$, by reducing the proof to the one dimensional problems obtained by intersecting $E$ with the optimal transport rays.

A major challenge to implement such a strategy in the Lorentzian setting is that one would need a timelike optimal transport from $E$ to its complement. Except from extremely symmetric situations (e.g., slabs in product spaces) it is highly unclear how general such an assumption would be; actually, it is immediate to build examples of sets $E$ where such a timelike optimal transport to the complement does not exist.
\\

Motivated by the above discussion, in the present paper, we adopt a different approach. 
Let $V$ be a Cauchy hypersurface  (actually, $V$ Borel achronal, timelike complete would suffice) in the ambient (possibly non-smooth) Lorentzian space $X$. Consider the partition of $I^+(V)$ induced by the flow lines of $\nabla \tau_V$, called \emph{$V$-rays}; this can be made rigorous even without differentiability assumptions on $\tau_V$, by studying the induced transport relation (see \cref{Ss:transportrelation}). By construction, such $V$-rays are Lorentzian geodesics. One can use the  disintegration theorem (see \cref{T:disint}) and localize the curvature-dimension conditions $\TCD^e_p(K,N)$ (see \cref{T:local}). Up to here, i.e., up to the end of \cref{S:localization}, the paper is an improvement of our previous \cite{CaMo:20}, where we localized the weaker $\TMCP(K,N)$ along the same partition.

The real technical novelty  of the present work is \cref{S:isoperimetric}, where we define and study a Lorentzian analog of the Minkowski content, subordinated to $\tau_V$, for Borel sets $A\subset I^+(V)$. In sharp contrast with the metric setting, where the sub-level sets of the distance function from a compact subset are compact and thus with finite measure, in the Lorentzian signature the sub-level sets $\{\tau_V\leq C\}$ are a-priori non-compact and with infinite measure. This poses serious technical challenges. To overcome these, we introduce new ideas:   to handle the case of possibly unbounded subsets $A$, we introduce the condition of \emph{empty future $V$-boundary} (see \cref{D:V-boundary}), we show that it is satisfied by Cauchy hypersurfaces (see \cref{L:Cauchyboundary}) and that it allows a reduction to 1-dimensional estimates of the Lorentzian Minkowski content (see \cref{P:main1} and \cref{P:identityslice}). In order to simplify the statements, and motivated by applications, in \cref{SSec:MinkBounded} we study the case when $A\subset I^+(V)$ is a compact acausal subset and we obtain the same 1-dimension reduction as above, under the assumption that $A$ does not intersect the boundary of $X$ (see \cref{P:main1_bounded},  \cref{C:ineqCompactAcausal} and \cref{rem:AI-B} for the precise -- and more general -- statements).

Building on top of the 1-dimensional reduction of the Lorentzian Minkowski content obtained in \cref{S:isoperimetric} and the 1-dimensional localization of the curvature-dimension condition established in \cref{T:local}, we prove the new Lorentzian isoperimetric-type inequality  in \cref{T:isop1}. This is the main result of the work. When compared with the L\'evy-Gromov inequality, a novelty in the statement is the appearance of the ``Lorentzian distance" from $V$ to $S$, denoted by $\dist(V,S)$. The reason for such a difference is both geometric (one cannot hope to prove an exact Lorentzian counterpart of the L\'evy-Gromov inequality, due to spaces such as in Example \ref{Example:1}) and technical, at the level of the proof -- as already discussed above.

\subsection*{Note added after completion}
The main results of the present work, with their proofs, were presented at various conferences since March 2023. After completion of  a first version of the preprint in fall 2023, we were informed of the independent work \cite{BraunMcCann} by Braun-McCann where the authors develop a synthetic framework for variable timelike Ricci lower bounds. Though the papers have different scopes and disjoint main results, there is non-empty overlapping about some of the techniques, in particular on some of the content of  \cref{S:localization}.

\subsection*{ Acknowledgments}  
A.\,M.\;acknowledges support from the European Research Council (ERC) under the European Union's Horizon 2020 research and innovation programme, grant agreement No.\;802689 ``CURVATURE''.  For the purpose of Open Access, he has applied a CC BY public copyright licence to any Author Accepted Manuscript (AAM) version arising from this submission.

Part of this research was carried out at the Fields Institute of Toronto, during the Thematic Program  ``Nonsmooth Riemannian and Lorentzian Geometry'', and at the Erwin Schr\"odinger International Institute for Mathematics and Physics (ESI) in Vienna, during the workshop ``Non-regular Spacetime Geometry''. The authors wish to express their appreciation to the two institutions and to the organisers of the corresponding events for the stimulating atmosphere and the excellent working conditions.

\section{Preliminaries}\label{S:Basics}

\subsection{Lorentzian length spaces}
 \label{Ss:Lorentzlength}

Following the work of Kunzinger-S\"amann \cite{KS},
in this section we briefly recall some basic notions 
from the theory of Lorentzian 
(pre-)length spaces.
We refer to \cite{KS} and to \cite{CaMo:20} for further details and for the proofs.

\begin{definition}[Causal space,  Kronheimer-Penrose \cite{CausalSpace}]
A \emph{causal space}  $(X,\ll,\leq)$ is a set $X$ endowed with a preorder $\leq$ and a transitive relation $\ll$ contained in $\leq$.
\end{definition}

We write $x<y$ when $x\leq y, x\neq y$. We say that $x$ and $y$ are timelike (resp. causally) related if $x\ll y$  (resp. $x\leq y$).  Let $A\subset X$ be an arbitrary subset of $X$. We define the chronological (resp. causal) future of $A$ the set
\begin{align*}
I^{+}(A)&:=\{y\in X\,:\, \exists x\in A \text{ such that } x\ll y\},\\
J^{+}(A) &:=\{y\in X\,:\, \exists x\in A \text{ such that } x\leq y\},
\end{align*}
respectively. 
Analogously, we define the   chronological (resp. causal) past of $A$. 
In case $A=\{x\}$ is a singleton, with a slight abuse of notation, 
we will write $I^{\pm}(x)$ (resp. $J^{\pm}(x)$) instead of  $I^{\pm}(\{x\})$ (resp. $J^{\pm}(\{x\})$).
Moreover 
we also introduce the following notations 
$$
X^{2}_{\leq}:=\{(x,y) \in X\times X\,:\, x\leq y \} , \qquad X^{2}_{\ll}:=\{(x,y) \in X\times X\,:\, x\ll y \}.
$$

\begin{definition}[Lorentzian pre-length space $(X,\sfd, \ll, \leq, \tau)$]
A \emph{Lorentzian pre-length space} $(X,\sfd, \ll, \leq, \tau)$ is a  causal space $(X,\ll,\leq)$ additionally  equipped with a proper metric $\sfd$  (i.e. closed and bounded subsets are compact) and a lower semicontinuous function $\tau: X\times X\to [0,\infty]$,  called \emph{time-separation function}, satisfying
\begin{equation}\label{eq:deftau}
\begin{split}
\tau(x,y)+\tau(y,z)\leq \tau (x,z) &\quad\forall x\leq y\leq z \quad \text{reverse triangle inequality} \\
\tau(x,y)=0, \; \text{if } x\not\leq y, & \quad  \tau(x,y)>0 \Leftrightarrow x\ll y.
\end{split}
\end{equation}
\end{definition}

\noindent
The lower semicontinuity of $\tau$ implies that $I^{\pm}(x)$ is open, for any $x\in X$.\\
The set $X$ is endowed with the metric topology induced by $\sfd$. All the topological concepts on $X$ will be formulated in terms of such metric topology.
For instance, we will denote by $\overline{C}$ the topological closure (with respect to $\sfd$) of a subset $C\subset X$.

\begin{definition}[Causal/timelike curves]
If $I\subset \R$ is an interval, a non-constant curve $\gamma:I\to X$ is called (future-directed) \emph{timelike} (resp. \emph{causal}) if $\gamma$ is locally Lipschitz continuous (with respect to $\sfd$) and if for all $t_{1}, t_{2}\in I$, with $t_{1}<t_{2}$, then $\gamma_{t_{1}}\ll \gamma_{t_{2}}$ (resp. $\gamma_{t_{1}}\leq \gamma_{t_{2}}$). We say that $\gamma$ is a \emph{null} curve if, in addition to being causal, no two points on $\gamma(I)$ are related with respect to $\ll$. 
\end{definition}

\smallskip
The length of a causal curve is defined via the time separation function, in analogy to the theory of length  metric spaces:
for $\gamma:[a,b]\to X$ future-directed causal we set 
\begin{equation*}
{\rm L}_{\tau}(\gamma):=\inf\left\{ \sum_{i=0}^{N-1} \tau(\gamma_{t_{i}}, \gamma_{t_{i+1}})  \,:\, a=t_{0}<t_{1}<\ldots<t_{N}=b, \; N\in \N \right\}.
\end{equation*}
In case the interval is half-open, say $I=[a,b)$, then the infimum is taken over all partitions with $a=t_{0}<t_{1}<\ldots<t_{N}<b$ (and analogously for the other cases).

Under fairly general assumptions, these definitions coincide with the classical ones in the smooth setting, 
see 
\cite[Prop.\;2.32, Prop.\;5.9]{KS}.

A future-directed causal curve $\gamma:[a,b]\to X$ is \emph{maximal} if  it realises the
time separation, i.e. if ${\rm L}_{\tau}(\gamma)=\tau(\gamma_{a}, \gamma_{b})$.
\\In case the time separation function is continuous with $\tau(x,x)=0$ for every $x\in X$, then any maximal timelike curve $\gamma$ with finite $\tau$-length has a (continuous, monotonically strictly increasing, not necessarily Lipschitz) 
reparametrization $\lambda$ by $\tau$-arc-length, i.e.
$\tau( \gamma_{\lambda(s_{1})}, \gamma_{\lambda(s_{2})})=s_{2}-s_{1}$ for all $s_{2}\leq s_{1}$ in the corresponding interval (see \cite[Cor.\;3.35]{KS}).

We therefore adopt the following convention:
a curve $\gamma$ 
will be called \emph{(causal) geodesic} if it is maximal and continuous 
when parametrized by $\tau$-arc-length. In other words, the set of (causal) geodesics is 
\begin{equation}\label{E:geodesic}
 \Geo(X):=\{ \gamma\in C([0,1], X)\,:  \, \tau(\gamma_{s}, \gamma_{t})=(t-s)\, \tau(\gamma_{0}, \gamma_{1})\, \forall s<t\}.   
\end{equation}
The set of \emph{timelike geodesic} is 
defined as follows: 
\begin{equation}\label{E:timegeo}
\TGeo(X):=\{ \gamma \in \Geo(X):  \, \tau(\gamma_{0}, \gamma_{1})>0\}.
\end{equation}
Given $x\leq y\in X$ we set
\begin{align}
\Geo(x,y)&:=\{ \gamma\in \Geo(X)\,:\, \gamma_{0}=x, \, \gamma_{1}=y\}  \label{eq:defGeo(x,y)} \\
\fI(x,y,t)&:=\{\gamma_{t}\,:\, \gamma\in \Geo(x,y)\}  \label{eq:defI(x,y,t)} 
\end{align}
respectively the space of geodesics, and the set of $t$-intermediate points  from $x$ to $y$.
\\ If $x\ll y\in X$, we call 
$$\TGeo(x,y):=\{ \gamma\in \TGeo(X)\,:\, \gamma_{0}=x, \, \gamma_{1}=y\}. $$
Given two subsets $A,B\subset X$, we denote
\begin{equation}\label{eq:defI(A,B,t)}
\fI(A,B,t):=\bigcup_{x\in A, y\in B} \, \fI(x,y,t) 
\end{equation}
the subset of $t$-intermediate points of geodesics from points in $A$ to points in $B$.

\begin{definition}[Timelike non-branching]\label{def:TNB}
A  Lorentzian pre-length space $(X,\sfd, \ll, \leq, \tau)$ is said to be \emph{forward timelike non-branching}  if and only if for any $\gamma^{1},\gamma^{2} \in \TGeo(X)$, the following holds:
$$
\exists \;  \bar t\in (0,1) \text{ such that } \ \forall t \in [0, \bar t\,] \quad  \gamma_{ t}^{1} = \gamma_{t}^{2}   
\quad 
\Longrightarrow 
\quad 
\gamma^{1}_{s} = \gamma^{2}_{s}, \quad \forall s \in [0,1].
$$
It is said to be \emph{backward timelike non-branching} if the reversed causal structure is forward timelike non-branching. In case it is both forward and backward timelike non-branching it is said \emph{timelike non-branching}.
\end{definition}  

By Cauchy Theorem, it is clear that if $(M,g)$ is a spacetime whose Christoffel symbols are locally-Lipschitz (e.g. in case $g\in C^{1,1}$) then the associated synthetic structure is timelike non-branching. 
For spacetimes with a metric of lower regularity (e.g. $g\in C^{1}$ or $g\in C^{0}$) timelike branching may occur.

\subsubsection{Causal Ladder}

Concerning the causal ladder, we 
follow \cite{Minguzzi:23}. In order 
to streamline the presentation we 
will only consider \emph{Lorentzian geodesic spaces,} i.e. 
 Lorentzian pre-length spaces $(X,\sfd, \ll, \leq, \tau)$ that  additionally are: 
\begin{itemize}
\item \emph{$\sfd$-Compatible:} every $x\in X$ admits a neighbourhood $U$ and a constant $C$ such that $L_{\sfd}(\gamma)\leq C$ for every causal curve $\gamma$ contained in $U$;
\item  \emph{Geodesic:} for all $x,y\in X$ with $x<y$ there is a future-directed causal curve $\gamma$ from $x$ to $y$ with $\tau(x,y)= {\rm L}_{\tau}(\gamma)$.
\end{itemize}
A Lorentzian geodesic space is in particular a Lorentzian length space, see  \cite[Def.\;3.22]{KS}. 

Hence from \cite[Cor.\;3.8]{Minguzzi:23} we can consider the following  version of global hyperbolicity that fits with the previous literature. 
A Lorentzian geodesic space $(X,\sfd, \ll, \leq,\tau)$ is called
\begin{itemize}
\item \emph{Causal}: if $\leq$  is also antisymmetric, i.e. $\leq$ is a partial order; 
\item \emph{Globally hyperbolic}: if it is causal and for every $x,y\in X$ the causal diamond $J^{+}(x)\cap J^{-}(y)$ is compact in $X$.
\end{itemize}

From \cite[Thm.\;3.7]{Minguzzi:23} this definition of global hyperbolicity is equivalent with the one adopted in \cite{KS} (that we omit). 
Also global hyperbolicity implies that the relation $\leq$ is a closed subset of 
$X\times X$.
It was proved in \cite[Thm.\;3.28]{KS} that for a globally hyperbolic  Lorentzian geodesic  space $(X,\sfd, \ll, \leq,\tau)$, the time-separation function $\tau$ is finite and continuous: in particular 
the previous remark on the existence of constant $\tau$-speed parametrizations for maximal causal curves applies, thus any two distinct causally related points are joined by a causal geodesic. 
\\From \cite{Minguzzi:23}
it also follows that if  $X$ is globally hyperbolic and $K_{1}, K_{2}\Subset X$ are compact subsets  then 
$$
\fI(K_{1},K_{2},t)\Subset \bigcup_{t\in [0,1]}  \fI(K_{1},K_{2},t) \Subset X, \quad \forall t\in [0,1].
$$

%

%

%

\subsection{Optimal transport in Lorentzian geodesic spaces}
\label{Ss:OTLorentz}

We start by briefly recalling some notation about convergence of probability measures.

Given a complete and separable (in particular, everything hold for proper) metric space $(X,\sfd)$, we denote by
$\BorelSets X$ the collection of all Borel subsets of $X$ and 
by $\mathcal P(X)$ (resp.  $\mathcal{P}_{c}(X)$)  the  collection of all Borel probability
measures (resp. with compact support).
We say that $(\mu_{k})\subset \mathcal P(X)$ \emph{narrowly converges} to $\mu_{\infty}\in \mathcal P(X)$ if
\begin{equation}\label{eq:defNarrowConv}
\lim_{k\to\infty}\int f\,\mu_k=\int f\,\mui\qquad\forevery f\in \Cb X
\end{equation}
where $\Cb X$ denotes the space of bounded and continuous functions.

We next review some basics on optimal transport
in the Lorentzian synthetic setting. 
For simplicity of presentation, we will assume that
$(X,\sfd, \ll, \leq,\tau)$
is a globally hyperbolic  Lorentzian geodesic space; 
we refer to \cite{CaMo:20} for more general results.

Given $\mu,\nu\in \mathcal{P}(X)$,  the set of \emph{transport plans} is 
$$\Pi(\mu,\nu):=\{\pi\in  \mathcal{P}(X\times X) \,:\, (P_{1})_{\sharp}\pi=\mu, \, (P_{2})_{\sharp}\pi=\nu \}.$$
The set of \emph{causal} and \emph{timelike transport plans} are defined by
\begin{align*}
 \Pi_{\leq}(\mu,\nu)&:=\{\pi\in  \Pi(\mu,\nu) \,:\,  \pi(X^{2}_{\leq})=1 \}, \\
  \Pi_{\ll}(\mu,\nu)&:=\{\pi\in  \Pi(\mu,\nu) \,:\,  \pi(X^{2}_{\ll})=1 \}. 
\end{align*}
As
$X^{2}_{\leq}\subset X^{2}$ is a closed subset, 
$\pi\in  \Pi_{\leq}(\mu,\nu)$ if and only if $\supp\, \pi \subset X^{2}_{\leq}$.
For $p\in (0,1]$, given $\mu,\nu\in \mathcal{P}(X)$, the $p$-Lorentz-Wasserstein distance is defined by
\begin{equation}\label{eq:defWp}
\ell_{p}(\mu,\nu):= \sup_{\pi \in \Pi_{\leq}(\mu,\nu)} \left(  \int_{X\times X}  \tau(x,y)^{p} \, \pi(dxdy)\right)^{1/p}.
\end{equation}
If $\Pi_{\leq}(\mu,\nu)=\emptyset$ we set $\ell_{p}(\mu,\nu):=-\infty$.
A plan  $\pi\in  \Pi_{\leq}(\mu,\nu)$ maximising in \eqref{eq:defWp} is said \emph{$\ell_{p}$-optimal}. The set of \emph{$\ell_{p}$-optimal} plans from $\mu$ to $\nu$ is denoted by $  \Pi_{\leq}^{p\text{-opt}}(\mu,\nu)$.

An alternative formulation of \eqref{eq:defWp} can be obtained by 
using the following function: 
\begin{equation}\label{eq:defell}
\ell(x,y)^{p}:=
\begin{cases}
\tau(x,y)^{p} \quad &  \text{if } x\leq y \\
-\infty \quad & \text{otherwise}.
\end{cases}
\end{equation}
Clearly, if $\pi\in \Pi_{\leq}(\mu,\nu)$, then $\pi$-a.e. one has $\tau(x,y)= \ell(x,y)$.  
Moreover, using the convention that $\infty-\infty=-\infty$, if $\pi\in \Pi(\mu,\nu)$ satisfies $ \int_{X\times X}  \ell(x,y)^{p} \, \pi(dxdy)>-\infty$ then $\pi\in \Pi_{\leq}(\mu,\nu)$.
Thus the maximization problem  \eqref{eq:defWp} is equivalent (i.e. the $\sup$ and the set of maximisers coincide) to the  maximisation problem
\begin{equation}\label{eq:supell}
\sup_{\pi \in \Pi(\mu,\nu)} \left(  \int_{X\times X}  \ell(x,y)^{p} \, \pi(dxdy)\right)^{1/p}.
\end{equation}
The advantage of the formulation \eqref{eq:supell} is that $\ell^p$ is upper semi-continuous on $X\times X$.
One can therefore invoke standard optimal transport techniques (e.g.\;\cite{villani:oldandnew}) to 
ensure the existence of a solution of the Monge-Kantorovich problem \eqref{eq:supell}. 

\begin{proposition}\label{prop:ExMaxellp}
Let  $(X,\sfd, \ll, \leq, \tau)$ be a  globally hyperbolic Lorentzian geodesic space and let $\mu,\nu\in \mathcal{P}(X)$. 
If  $ \Pi_{\leq}(\mu,\nu)\neq \emptyset$ and if  there exist measurable functions $a,b:X\to \R$, with $a\oplus b \in L^{1}(\mu\otimes \nu)$ such that $\ell^{p}\leq a\oplus b$  on $\supp \, \mu \times \supp \, \nu$ (e.g. when $\mu$ and $\nu$ are compactly supported) then the $\sup$ in \eqref{eq:defWp} is attained and finite. 
\end{proposition}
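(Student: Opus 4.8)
\textbf{Proof proposal for \cref{prop:ExMaxellp}.}

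The plan is to reduce the maximization problem \eqref{eq:defWp} for $\ell_p$ to the maximization problem \eqref{eq:supell} for $\ell^p$ — which, as observed in the excerpt, has the same supremum and the same set of maximizers — and then to run the classical direct method of the calculus of variations on \eqref{eq:supell}. The single ingredient that makes this work in the Lorentzian setting is that $\ell^p$ is \emph{upper} semicontinuous on $X \times X$ (rather than lower semicontinuous, as costs are in the usual optimal transport problem), so we are maximizing an u.s.c.\ functional over a compact set, and the supremum is attained.

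First I would note that $\Pi(\mu,\nu)$ is nonempty (it contains $\mu \otimes \nu$) and, since $\sfd$ is proper and $X$ is globally hyperbolic, $\Pi(\mu,\nu)$ is tight: indeed for any $\eps>0$ there are compact sets $K_\mu, K_\nu$ with $\mu(X\setminus K_\mu), \nu(X\setminus K_\nu) < \eps$, and then every $\pi \in \Pi(\mu,\nu)$ satisfies $\pi\big((X\times X)\setminus (K_\mu \times K_\nu)\big) < 2\eps$; hence by Prokhorov $\Pi(\mu,\nu)$ is relatively compact in the weak topology, and it is weakly closed since the marginal constraints pass to weak limits, so it is weakly compact. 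Next, using the hypothesis $a \oplus b \in L^1(\mu \otimes \nu)$ with $\ell^p \le a \oplus b$ on $\supp\mu \times \supp\nu$ (which in particular holds when $\mu,\nu$ are compactly supported, by continuity of $\tau$ on the globally hyperbolic space and boundedness of $\tau$ on compact causal diamonds, so that one may take $a,b$ bounded), the functional $\pi \mapsto \int \ell^p\, d\pi$ is well defined in $[-\infty, +\infty)$ and, by subtracting $a\oplus b$, we may write $\int \ell^p\, d\pi = \int (a\oplus b)\, d(\mu\otimes\nu) + \int (\ell^p - a\oplus b)\, d\pi$ where the integrand $\ell^p - a\oplus b$ is u.s.c.\ and bounded above by $0$. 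Standard portmanteau-type arguments (e.g.\ writing an u.s.c.\ function bounded above as a decreasing limit of bounded continuous functions) give that $\pi \mapsto \int (\ell^p - a\oplus b)\, d\pi$ is u.s.c.\ with respect to weak convergence; hence $\pi \mapsto \int \ell^p\, d\pi$ is u.s.c.\ on $\Pi(\mu,\nu)$.

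Then I would take a maximizing sequence $\pi_k \in \Pi(\mu,\nu)$ for \eqref{eq:supell}; note the supremum is $> -\infty$ because $\Pi_{\le}(\mu,\nu) \neq \emptyset$ provides at least one plan with finite, real integral (using again $\ell^p \le a\oplus b \in L^1$ to bound the integral from above, and the lower bound being automatic since $\ell^p > -\infty$ on $X^2_{\le}$). By weak compactness, pass to a subsequence $\pi_k \weak \pi_\infty \in \Pi(\mu,\nu)$; by upper semicontinuity of the functional, $\int \ell^p\, d\pi_\infty \ge \limsup_k \int \ell^p\, d\pi_k = \sup$, so $\pi_\infty$ is a maximizer and the sup is attained. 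Finiteness of the sup follows from $\int \ell^p\, d\pi_\infty \le \int (a\oplus b)\, d(\mu\otimes\nu) < \infty$. Finally, since the maximizer $\pi_\infty$ has $\int \ell^p\, d\pi_\infty > -\infty$, the remark preceding the proposition (``if $\pi \in \Pi(\mu,\nu)$ satisfies $\int \ell^p\, d\pi > -\infty$ then $\pi \in \Pi_\le(\mu,\nu)$'') forces $\pi_\infty \in \Pi_\le(\mu,\nu)$, and on $\Pi_\le$ the two functionals agree; thus $\pi_\infty$ attains the sup in \eqref{eq:defWp} as well.

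I expect the only genuinely delicate point to be the upper semicontinuity of $\pi \mapsto \int \ell^p\, d\pi$ under weak convergence, and more precisely the bookkeeping needed because $\ell^p$ takes the value $-\infty$: one must first reduce to the bounded-above u.s.c.\ function $\ell^p - a\oplus b$ (legitimate thanks to the domination hypothesis and $a\oplus b \in L^1(\mu\otimes\nu)$, so the subtracted term contributes a finite constant independent of $\pi$), after which the classical fact ``u.s.c.\ and bounded above $\Rightarrow$ $\int \cdot\, d\pi$ is weakly u.s.c.'' applies verbatim. Everything else — tightness/weak compactness of $\Pi(\mu,\nu)$, existence of a maximizing sequence, and the transfer between \eqref{eq:defWp} and \eqref{eq:supell} — is routine and already prepared in the text preceding the statement.
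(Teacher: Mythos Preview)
Your argument is correct and is precisely the direct-method computation the paper gestures at when it says one can ``invoke standard optimal transport techniques (e.g.\ Villani)'' --- the paper gives no further proof of this proposition beyond that citation. One small caveat worth tightening: since $a,b$ are only assumed measurable, the integrand $\ell^p - a\oplus b$ need not itself be upper semicontinuous; the clean route is to truncate $\ell^p_M := \min(\ell^p, M)$ (u.s.c.\ and bounded above, so $\pi\mapsto\int \ell^p_M\, d\pi$ is weakly u.s.c.) and control the remainder uniformly over $\Pi(\mu,\nu)$ via $\int(\ell^p - M)_+\, d\pi \le \int(a - M/2)_+\, d\mu + \int(b - M/2)_+\, d\nu \to 0$, the right-hand side depending only on the fixed marginals.
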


In Proposition \ref{prop:ExMaxellp} we used the following standard notation: given $\mu,\nu\in \Prob(X)$,  $\mu \otimes \nu\in \Prob(X^{2})$ is the product measure; given $u,v:X\to \R\cup \{+\infty\} $, the function $u\oplus v: X^{2}\to \R\cup \{+\infty\}$ is defined by $u\oplus v(x,y):=u(x)+v(y)$.

The Lorentzian-Wasserstein distance $\ell_{p}$ satisfies the reverse triangle inequality:
\begin{equation}\label{eq:RTIellq}
\ell_{p}(\mu_{0},\mu_{1})+ \ell_{p}(\mu_{1},\mu_{2})
\leq \ell_{p}(\mu_{0}, \mu_{2}), 
\quad \forall \mu_{0},\mu_{1},\mu_{2}\in \mathcal{P}(X),
\end{equation}
where we adopt the convention that $\infty-\infty=-\infty$ to interpret the left hand side of \eqref{eq:RTIellq}.

We also recall two relevant notions of cyclical monotonicity. 
\begin{definition}[$\tau^{p}$-cyclical monotonicity and $\ell^{p}$-cyclical monotonicity]\label{D:monotonicity}
A subset $\Gamma\subset X^{2}_{\leq}$ is said to be $\tau^{p}$-cyclically monotone (resp. $\ell^{p}$-cyclically monotone) if, for any $N\in \N$ and any family $(x_{1}, y_{1}), \ldots, (x_{N}, y_{N})$ of points in $\Gamma$,  it satisfies
\begin{equation}\label{eq:taupcyclmon}
\sum_{i=1}^{N}\tau(x_{i}, y_{i})^{p} \geq \sum_{i=1}^{N}\tau(x_{i+1}, y_{i})^{p},
\end{equation}
(resp. $ \sum_{i=1}^{N}\ell^{p}(x_{i}, y_{i}) \geq \sum_{i=1}^{N}\ell^{p}(x_{i+1}, y_{i}))$ with the convention $x_{N+1}=x_{1}$. 
\end{definition}
Accordingly, a transport plan $\pi$
 is said to be $\tau^{p}$-cyclically monotone (resp. $\ell^{p}$-cyclically monotone) if there exists a 
 $\tau^{p}$-cyclically monotone set (resp. $\ell^{p}$-cyclically monotone set) $\Gamma$ such that $\pi(\Gamma) = 1$.
It is straightforward to check that 
$\tau^{p}$-cyclical monotonicity implies $\ell^{p}$-cyclical monotonicity. 
Moreover,  denoting by $P_i : X\times X \to X$ the projection on the $i$-th component,
if $P_{1}(\Gamma)\times P_{2}(\Gamma) \subset X^{2}_{\leq}$ then $\ell^{p}$-cyclical monotonicity is equivalent to $\tau^{p}$-cyclical monotonicity

Under fairly general assumptions, cyclical monotonicity and optimality are equivalent. Indeed:
\begin{proposition}[Prop.\;2.8 and Thm\;2.26 in \cite{CaMo:20}]\label{prop:cicmon<->opt}
If $(X,\sfd, \ll, \leq, \tau)$ is a  globally hyperbolic Lorentzian geodesic space, $p\in (0,1]$, $\mu,\nu\in \mathcal{P}(X)$  with $\ell_{p}(\mu,\nu) \in (0,\infty)$, then  for any $\pi\in \Pi_{\leq} (\mu,\nu)$ the following holds:
\begin{enumerate}
\item If $\pi$ is  $\ell_{p}$-optimal then $\pi$ is  $\ell^{p}$-cyclically monotone. 
\item If $\pi(X^{2}_{\ll}) = 1$ and $\pi$ is $\ell^{p}$-cyclically monotone then $\pi$ is $\ell_{p}$-optimal. 
\item If $\pi$ is  $\tau^{p}$-cyclically monotone then $\pi$ is $\ell_{p}$-optimal.
\end{enumerate}
\end{proposition}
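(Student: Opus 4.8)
The statement to prove is Proposition \ref{prop:cicmon<->opt}, which asserts the equivalence of optimality and cyclical monotonicity for causal transport plans in globally hyperbolic Lorentzian geodesic spaces. Here is how I would approach its proof.

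\smallskip

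\textbf{Overall strategy.} The plan is to mimic the classical theory of cyclical monotonicity in optimal transport (as in \cite{villani:oldandnew}), but working with the upper semicontinuous cost $\ell^p$ from \eqref{eq:defell} rather than with $\tau^p$ directly, precisely because the formulations \eqref{eq:defWp} and \eqref{eq:supell} have the same suprema and the same maximizers while $\ell^p$ has the good topological property of upper semicontinuity. The three items are proved in a different logical order than they are stated: first I would establish item (1) (optimality $\Rightarrow$ $\ell^p$-cyclical monotonicity), which is the ``easy'' direction and uses only a finite rearrangement argument; then item (3) ($\tau^p$-cyclical monotonicity $\Rightarrow$ optimality), which upgrades to optimality by building a Kantorovich potential; and finally item (2), which is the special case of the converse valid under the extra assumption $\pi(X^2_{\ll})=1$, where $\ell^p$- and $\tau^p$-cyclical monotonicity essentially coincide because all the relevant pairs are timelike.

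\smallskip

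\textbf{Step 1 (optimality implies $\ell^p$-cyclical monotonicity).} Suppose $\pi \in \Pi_{\leq}(\mu,\nu)$ is $\ell_p$-optimal but not $\ell^p$-cyclically monotone. Then there are points $(x_1,y_1),\dots,(x_N,y_N)$ in $\supp\pi$ violating \eqref{eq:taupcyclmon} (for $\ell^p$) with a strict deficit, hence by upper semicontinuity of $\ell^p$ the same strict inequality persists on small neighborhoods of these pairs. One then performs the standard surgery: localize $\pi$ on these neighborhoods, disintegrate, and reroute an $\varepsilon$ amount of mass along the cyclic permutation $y_i \mapsto x_{i+1}$, producing a competitor $\tilde\pi \in \Pi(\mu,\nu)$ with strictly larger $\ell^p$-cost. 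Because the cost increased and was finite (so the new pairs must be causal), $\tilde\pi \in \Pi_{\leq}(\mu,\nu)$, contradicting optimality in \eqref{eq:supell}. This is where one uses that $\mu,\nu$ and the construction keep us within the regime where Proposition \ref{prop:ExMaxellp} guarantees finiteness; since $\ell_p(\mu,\nu)\in(0,\infty)$ this is in hand.

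\smallskip

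\textbf{Step 2 ($\tau^p$-cyclical monotonicity implies optimality, and the timelike converse).} Given a $\tau^p$-cyclically monotone set $\Gamma$ with $\pi(\Gamma)=1$, one constructs a Rockafellar-type potential: fix $(x_0,y_0)\in\Gamma$ and set
\begin{equation*}
\varphi(x):=\inf\left\{\sum_{i=1}^{m}\left[\ell^p(x_{i},y_{i})-\ell^p(x_{i+1},y_{i})\right]: m\in\N,\ (x_i,y_i)\in\Gamma,\ x_{m+1}=x\right\},
\end{equation*}
which by $\tau^p$-cyclical monotonicity is not identically $-\infty$, and check that $\varphi(x)+\ell^p(x,y)\leq \varphi(x')$ whenever $(x',y)\in\Gamma$, with equality along $\Gamma$. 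This yields a pair of potentials certifying optimality against any competitor in $\Pi_{\leq}(\mu,\nu)$ by the usual integration-and-duality argument; measurability of $\varphi$ is handled as in the classical case (it is an infimum of continuous-up-to-sign functions, hence can be taken measurable on the support). For item (2), if additionally $\pi(X^2_{\ll})=1$ then $\pi$ is concentrated on a set $\Gamma\subset X^2_{\ll}$ on which $\ell^p=\tau^p$ and, crucially, one can arrange $P_1(\Gamma)\times P_2(\Gamma)\subset X^2_{\leq}$ (using that $\leq$ is closed and some care with the chronological relation), so that by the remark after Definition \ref{D:monotonicity} the assumed $\ell^p$-cyclical monotonicity is equivalent to $\tau^p$-cyclical monotonicity, and item (3) applies.

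\smallskip

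\textbf{Main obstacle.} The delicate point is not the abstract rearrangement but the interaction between the two cost functions $\tau^p$ and $\ell^p$ and the requirement that rerouted mass stays causal: $\ell^p$ takes the value $-\infty$ off $X^2_{\leq}$, so one must be sure that the measurable selections and the neighborhoods in the surgery of Step 1 do not accidentally create non-causal pairs on a set of positive mass — this is exactly what the finiteness ($\ell_p(\mu,\nu)<\infty$) and the a priori causality of the competitor buy us. The second subtlety is, in item (2), verifying that one may assume $P_1(\Gamma)\times P_2(\Gamma)\subset X^2_{\leq}$: this uses the positivity $\ell_p(\mu,\nu)>0$ together with the openness of $I^{\pm}$ and closedness of $\leq$ in a globally hyperbolic space, and is the place where the hypothesis $\pi(X^2_{\ll})=1$ is genuinely needed rather than merely convenient.
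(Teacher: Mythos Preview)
The paper does not give a proof of this proposition: it is stated as a citation of Prop.\;2.8 and Thm.\;2.26 in \cite{CaMo:20}, so there is no ``paper's own proof'' to compare against here. That said, your outline for items (1) and (3) follows the standard optimal-transport template and is essentially correct in spirit: the surgery argument for (1) and a Rockafellar-type potential for (3) are exactly what one expects, with the only real care being the handling of the value $-\infty$ taken by $\ell^p$.

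Your argument for item (2), however, has a genuine gap. You claim that from $\pi(X^2_{\ll})=1$ one can ``arrange $P_1(\Gamma)\times P_2(\Gamma)\subset X^2_{\leq}$'' and then invoke the remark after Definition~\ref{D:monotonicity} to upgrade $\ell^p$-cyclical monotonicity to $\tau^p$-cyclical monotonicity. This is not true in general: take in Minkowski space $\mu=\tfrac12(\delta_a+\delta_b)$ and $\nu=\tfrac12(\delta_{a'}+\delta_{b'})$ with $a\ll a'$, $b\ll b'$, but $a$ and $b'$ spacelike separated. The plan $\pi=\tfrac12(\delta_{(a,a')}+\delta_{(b,b')})$ satisfies $\pi(X^2_{\ll})=1$, yet for any $\Gamma$ carrying $\pi$ one has $(a,b')\in P_1(\Gamma)\times P_2(\Gamma)\setminus X^2_{\leq}$. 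So the reduction to item (3) via that remark fails, and the appeal to ``openness of $I^\pm$ and closedness of $\leq$'' does not rescue it. The actual proof in \cite{CaMo:20} proceeds differently: one works directly with the $\ell^p$-potential and the associated (Lorentzian) Kantorovich duality, carefully tracking where the potential is finite and where the $-\infty$ values of $\ell^p$ enter; the hypothesis $\pi(X^2_{\ll})=1$ is used to ensure the potential pair is finite $\mu$-a.e.\ and $\nu$-a.e.\ and that the duality inequality is saturated on $\Gamma$, not to force a product-type causality of the supports.
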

\noindent
Finally, we recall from \cite{CaMo:20} the definition of (strongly) timelike $p$-dualisable probability measures.

\begin{definition}[(Strongly) Timelike $p$-dualisable measures]\label{D:dualisable}
 Let  $(X,\sfd, \ll, \leq, \tau)$ be a Lorentzian pre-length space and let $p\in (0,1]$. We say that $(\mu,\nu)\in \mathcal{P}(X)^{2}$ is \emph{timelike $p$-dualisable (by $\pi\in \Pi_{\ll}(\mu,\nu)$)}  if 
 \begin{enumerate}
\item  $\ell_{p}(\mu,\nu)\in (0,\infty)$;
\item  $\pi\in  \Pi_{\leq}^{p\text{-opt}}(\mu,\nu)$ and $\pi(X^{2}_{\ll})=1$;
\item there exist measurable functions $a,b:X\to \R$, with $a\oplus b \in L^{1}(\mu\otimes \nu)$ such that  $\ell^{p}\leq a\oplus b$ on $\supp \, \mu \times  \supp \, \nu $.
\end{enumerate}
We say that $(\mu,\nu)\in \mathcal{P}(X)^{2}$ is \emph{strongly timelike $p$-dualisable} if, in addition: 
\begin{enumerate}
\item [4.]  there exists a measurable $\ell^{p}$-cyclically monotone set $\Gamma\subset X^{2}_{\ll} \cap (\supp \, \mu \times \supp \,\nu)$ such that a coupling  $\pi\in \Pi_{\leq}(\mu,\nu)$ is $\ell_{p}$-optimal if  and only if $\pi$ is concentrated on $\Gamma$, i.e. $\pi(\Gamma)=1$.
\end{enumerate}
 \end{definition} 
 
The above notions are connected with the validity of Kantorovich duality (see \cite[Sec.\;2.4]{CaMo:20}). 
The notion of strongly timelike $p$-dualisability is non-vacuous:

\begin{lemma}[Cor.\;2.29 in \cite{CaMo:20}]\label{lem:q-dualPcX}
Fix $p\in (0,1]$. Let  $(X,\sfd, \ll, \leq, \tau)$ be a  globally hyperbolic Lorentzian geodesic space and let $\mu,\nu\in \mathcal{P}(X)$ satisfy that:
\begin{enumerate}
\item there exist measurable functions $a,b:X\to \R$  with $a\oplus b \in L^{1}(\mu\otimes \nu)$ such that $\tau^{p}\leq a \oplus b$ on $\supp \, \mu \times  \supp \, \nu $;
\item $\supp \, \mu \times \supp \, \nu \subset X^{2}_{\ll}$.
\end{enumerate}
Then $(\mu,\nu)$  is strongly timelike $p$-dualisable.
\end{lemma}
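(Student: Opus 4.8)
The plan is to verify the four requirements in \cref{D:dualisable} in turn; the first three are a bookkeeping exercise on top of \cref{prop:ExMaxellp}, while the fourth — the rigidity of $\ell_{p}$-optimality — is the substantive point and will rest on Kantorovich duality for an upper semicontinuous cost dominated from above by an $L^{1}$ function. First I would record the structural observation that makes everything run smoothly: since $I^{\pm}(x)$ is open for every $x$, the set $X^{2}_{\ll}$ is open in $X\times X$, so by assumption (2) the closed set $\supp\,\mu\times\supp\,\nu$ is contained in it. Hence every $\pi\in\Pi(\mu,\nu)$ satisfies $\supp\,\pi\subset\supp\,\mu\times\supp\,\nu\subset X^{2}_{\ll}$, so that $\Pi(\mu,\nu)=\Pi_{\leq}(\mu,\nu)=\Pi_{\ll}(\mu,\nu)\neq\emptyset$ (it contains $\mu\otimes\nu$), every coupling between $\mu$ and $\nu$ is automatically timelike, and $\ell^{p}=\tau^{p}\geq 0$ on $\supp\,\mu\times\supp\,\nu$.

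Next I would dispatch conditions (1)--(3). Finiteness of $\ell_{p}(\mu,\nu)$ and attainment of the supremum in \eqref{eq:defWp} by some $\pi$ come directly from \cref{prop:ExMaxellp}, whose hypotheses are precisely $\Pi_{\leq}(\mu,\nu)\neq\emptyset$ together with assumption (1). For strict positivity I would fix $(x_{0},y_{0})\in\supp\,\mu\times\supp\,\nu$: then $x_{0}\ll y_{0}$, hence $\tau(x_{0},y_{0})>0$, and lower semicontinuity of $\tau$ yields open neighbourhoods $U\ni x_{0}$, $W\ni y_{0}$ and $\delta>0$ with $\tau\geq\delta$ on $U\times W$; since $\mu(U)\nu(W)>0$, testing \eqref{eq:defWp} against $\mu\otimes\nu$ gives $\ell_{p}(\mu,\nu)^{p}\geq\delta^{p}\mu(U)\nu(W)>0$. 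This is (1); the optimal plan just obtained is concentrated on $X^{2}_{\ll}$ by the first paragraph, which is (2); and (3) is assumption (1) verbatim. So $(\mu,\nu)$ is timelike $p$-dualisable.

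The substantive step is condition (4). Restricted to $\supp\,\mu\times\supp\,\nu$, the cost $\ell^{p}$ is upper semicontinuous and satisfies $0\leq\ell^{p}\leq a\oplus b\in L^{1}(\mu\otimes\nu)$, which puts us in the classical setting where Kantorovich duality for the maximisation problem \eqref{eq:supell} holds with no duality gap and with a maximising pair of potentials $(\varphi,\psi)$ — an $\ell^{p}$-conjugate couple — satisfying $\varphi\oplus\psi\geq\ell^{p}$ on $\supp\,\mu\times\supp\,\nu$ and $\int\varphi\,d\mu+\int\psi\,d\nu=\ell_{p}(\mu,\nu)^{p}$ (see e.g.\;\cite{villani:oldandnew}). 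I would then take
\[
\Gamma:=\bigl\{(x,y)\in\supp\,\mu\times\supp\,\nu\,:\,\varphi(x)+\psi(y)=\ell^{p}(x,y)\bigr\},
\]
which is Borel, $\ell^{p}$-cyclically monotone, and contained in $\supp\,\mu\times\supp\,\nu\subset X^{2}_{\ll}$. The characterisation of optimality is then the standard two-line argument: if $\pi\in\Pi_{\leq}(\mu,\nu)=\Pi(\mu,\nu)$ has $\pi(\Gamma)=1$ then, $\pi$ being concentrated on $\supp\,\mu\times\supp\,\nu$, $\int\ell^{p}\,d\pi=\int(\varphi\oplus\psi)\,d\pi=\int\varphi\,d\mu+\int\psi\,d\nu=\ell_{p}(\mu,\nu)^{p}$, so $\pi$ is $\ell_{p}$-optimal; conversely if $\pi$ is $\ell_{p}$-optimal then $\int(\varphi\oplus\psi-\ell^{p})\,d\pi=0$ with non-negative integrand, whence $\pi(\Gamma)=1$. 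This is exactly condition (4), so $(\mu,\nu)$ is strongly timelike $p$-dualisable; since $P_{1}(\Gamma)\times P_{2}(\Gamma)\subset X^{2}_{\ll}\subset X^{2}_{\leq}$, $\Gamma$ is moreover $\tau^{p}$-cyclically monotone, by the remark following \cref{D:monotonicity}.

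The one ingredient that is not a mechanical unwinding of definitions is the Kantorovich duality with dual attainment for an upper semicontinuous, $L^{1}$-dominated cost invoked in the last step, and that will be the main obstacle — in particular one has to be careful about integrability of $\varphi$ and $\psi$. The rest reduces to the single observation that, under assumption (2), all couplings between $\mu$ and $\nu$ are timelike, so that on the relevant set $\Pi$, $\Pi_{\leq}$, $\Pi_{\ll}$ coincide and $\ell^{p}$- and $\tau^{p}$-cyclical monotonicity coincide as well.
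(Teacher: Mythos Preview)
The paper does not supply its own proof of this lemma: it is stated verbatim as Cor.\;2.29 of \cite{CaMo:20} and used as a black box. So there is no in-paper argument to compare against; I can only assess your proposal on its own merits.

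Your argument is correct and is essentially the standard route (and, in outline, the one taken in \cite{CaMo:20}). The structural observation that assumption (2) forces $\Pi(\mu,\nu)=\Pi_{\leq}(\mu,\nu)=\Pi_{\ll}(\mu,\nu)$ is exactly the right first move, and your verification of (1)--(3) via \cref{prop:ExMaxellp} is clean. For (4), note that in the present setting $\tau$ is actually \emph{continuous} (this is recalled in the paper just after the definition of global hyperbolicity), so on $\supp\,\mu\times\supp\,\nu$ the cost $\ell^{p}=\tau^{p}$ is continuous, non-negative, and bounded above by $a\oplus b\in L^{1}(\mu\otimes\nu)$; this places you squarely in the hypotheses of the Kantorovich duality theorem with dual attainment (e.g.\ \cite[Thm.\;5.10]{villani:oldandnew}), which resolves the integrability caveat you flagged. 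The contact-set $\Gamma$ then does exactly what you claim. Your closing remark that $\Gamma$ is also $\tau^{p}$-cyclically monotone is correct but not needed for the statement.
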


\subsubsection{Geodesics of probability measures in the Lorentz-Wasserstein space}
\label{Ss:geodesicstructure}

Let us start by introducing some classical notation.
The evaluation map is defined by 
\begin{equation}\label{def:eet}
\ee_{t}: C([0,1], X) \to X, \quad \gamma\mapsto \ee_{t}(\gamma):=\gamma_{t}, \quad \forall t\in [0,1].
\end{equation}
The stretching/restriction operator ${\rm restr}_{s_{1}}^{s_{2}}: C([0,1], X) \to C([0,1], X)$ is defined by
\begin{equation}\label{def:restr}
({\rm restr}_{s_{1}}^{s_{2}} \gamma)_{t}:= \gamma_{(1-t)s_{1}+t s_{2}}, \quad \forall s_{1},s_{2}\in [0,1], s_{1}<s_{2}, \, \forall t\in [0,1].
\end{equation}

\begin{definition}[$\ell_p$-optimal dynamical plans and $\ell_p$-geodesics]\label{def:ellp-DOP}
Let  $(X,\sfd, \ll, \leq, \tau)$ be a Lorentzian pre-length space and let $p\in (0,1]$. We say that $\eta\in \mathcal{P}(\Geo(X))$ is an  \emph{$\ell_p$-optimal dynamical plan} from $\mu_0\in \mathcal{P}(X)$ to $\mu_1\in \mathcal{P}(X)$ if $(\ee_0)_\sharp \eta=\mu_0, (\ee_1)_\sharp \eta=\mu_1$ and 
\begin{equation}\label{eq:defODP}
(\ee_0, \ee_1)_{\sharp} \eta \quad \text{ belongs to }\Pi^{p\text{-opt}}_{\leq} ((\ee_0)_\sharp \eta, (\ee_1)_\sharp \eta).
\end{equation}
The set of $\ell_p$-optimal dynamical plans from $\mu_{0}$ to $\mu_{1}$ is denoted by ${\rm OptGeo}_{\ell_{p}}(\mu_{0}, \mu_{1})$.
We say that a curve $[0,1] \ni t \mapsto  \mu_{t} \in \mathcal{P}(X)$ 
is an $\ell_{p}$-geodesic if there exists an $\ell_p$-optimal dynamical plan $\eta$ from $\mu_0$ to $\mu_1$ such that $\mu_t=(\ee_t)_\sharp \eta$, for all $t\in [0,1]$.
\end{definition}
Notice that if  $\eta\in {\rm OptGeo}_{\ell_{p}}(\mu_{0}, \mu_{1})$, then the $\ell_p$-geodesic 
$$
\mu_t:=(\ee_t)_\sharp \eta, \quad \forall t\in [0,1],
$$
is continuous in narrow topology and satisfies
$
\ell_{p}(\mu_{s}, \mu_{t})=(t-s) \ell_{p}(\mu_{0}, \mu_{1})$, for all $s ,t  \in [0,1]$.

Let us recall that if $\mu_{0},\mu_{1}\in \Prob(X)$ have compact support, then there always exists an  $\ell_p$-optimal dynamical plan $\eta\in {\rm OptGeo}_{\ell_{p}}(\mu_{0}, \mu_{1})$ (and thus an $\ell_{p}$-geodesic) from $\mu_{0}$ to $\mu_{1}$, see \cite[Prop.\;2.33]{CaMo:20} for the proof and for other properties of $\ell_p$-optimal dynamical plans.

\subsubsection{Time-separation functions from sets and their transport relations}
\label{Ss:transportrelation}
A subset $V\subset X$ is called \emph{achronal} if $x\not \ll y$ for every $x,y\in V$. In particular, if $V$ is achronal, then $I^{+}(V)\cap I^{-}(V)= \emptyset$, so we can define the \emph{signed time-separation} to $V$, $\tau_{V}:X\to [-\infty, +\infty]$, by
\begin{equation}\label{eq:deftauV}
\tau_{V}(x):=
\begin{cases}
\sup_{y\in V} \tau(y,x), &\quad \text{ for }x\in I^{+}(V)\\
-\sup_{y\in V} \tau(x,y),& \quad \text{ for }x\in I^{-}(V) \\
0 &\quad \text{ otherwise}
\end{cases}.
\end{equation}
Note that $\tau_{V}$ is lower semi-continuous on $I^{+}(V)$ as supremum of continuous functions, 
and is upper semi-continuous on  $I^{-}(V)$.
\\In order for these suprema to be attained, global hyperbolicity and geodesic property of $X$ alone are not sufficient. One should rather demand additional compactness properties of the set $V$. The following notion, introduced by Galloway \cite{Ga} in the smooth setting, is well suited to this aim.

\begin{definition}[Future timelike complete (FTC) subsets]\label{def:FTC}
A subset $V\subset X$ is \emph{future timelike complete} (FTC), if for each point  $x\in I^{+}(V)$, the intersection $J^{-}(x)\cap V \subset V$ has compact closure (w.r.t. $\sfd$) in $V$. Analogously, one defines \emph{past timelike completeness} (PTC). A subset that is both   FTC and PTC is called \emph{timelike complete}.
\end{definition}

\begin{lemma}[Lemma 4.1, \cite{CaMo:20}]\label{L:initialpoint} 
Let $(X,\sfd, \ll, \leq, \tau)$ be a globally hyperbolic Lorentzian geodesic  space and let $V\subset X$ be an achronal FTC (resp. PTC) subset. Then  for each $x\in I^{+}(V)$ (resp. $x\in I^{-}(V)$) there exists a point $y_{x}\in V$ with $\tau_{V}(y_{x})=\tau(y_{x},x)>0$ (resp. $\tau_{V}(y_{x})=-\tau(x,y_{x})<0$).

Moreover for all $x,z\in I^{+}(V)\cup V$,
\begin{equation}\label{eq:tauvzxtau}
\tau_{V}(z) - \tau_{V}(x) \geq \tau(y_{x},z)-\tau(y_{x},x)  \geq  \tau(x,z), 
\end{equation}
provided $(x,z) \in X^{2}_{\leq}$. An analogous statement is valid for 
$x\in I^{-}(V)$.
\end{lemma}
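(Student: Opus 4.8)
The plan is to prove the three displayed inequalities in order, relying on the reverse triangle inequality for $\tau$ and on the FTC assumption to produce the ``foot points'' $y_x\in V$ realizing $\tau_V$.

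\medskip

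\textbf{Step 1: existence of the realizing point $y_x$.} Fix $x\in I^+(V)$. By definition, $\tau_V(x)=\sup_{y\in V}\tau(y,x)>0$, so there is a maximizing sequence $y_k\in V$ with $\tau(y_k,x)\to\tau_V(x)$ and, passing to a tail, $y_k\ll x$, i.e. $y_k\in J^-(x)\cap V$. By the FTC hypothesis, $J^-(x)\cap V$ has compact closure in $V$, so up to a subsequence $y_k\to y_x\in V$ (using that $V$, with the induced metric topology, is closed in the relevant compact set; more precisely the closure is taken inside $V$). Global hyperbolicity guarantees $\leq$ is closed and $\tau$ is continuous, hence $y_x\leq x$ and $\tau(y_x,x)=\lim_k\tau(y_k,x)=\tau_V(x)$. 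Since $\tau_V(x)>0$ this forces $y_x\ll x$, i.e. $\tau(y_x,x)=\tau_V(x)>0$. The PTC case is symmetric, replacing futures by pasts and $\tau(y,x)$ by $\tau(x,y)$.

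\medskip

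\textbf{Step 2: the chain of inequalities \eqref{eq:tauvzxtau}.} Take $x,z\in I^+(V)\cup V$ with $x\leq z$. Let $y_x\in V$ be the point from Step 1 realizing $\tau_V(x)$ (if $x\in V$ is on the boundary one may simply take $y_x=x$, since then $\tau_V(x)=0=\tau(x,x)$). For the \emph{left} inequality: $y_x\leq x\leq z$, so by the reverse triangle inequality $\tau(y_x,z)\geq \tau(y_x,x)+\tau(x,z)$, and because $\tau(y_x,z)\leq \sup_{y\in V}\tau(y,z)=\tau_V(z)$ while $\tau(y_x,x)=\tau_V(x)$, we get
\begin{equation*}
\tau_V(z)-\tau_V(x)\;\geq\;\tau(y_x,z)-\tau(y_x,x)\;\geq\;\tau(x,z).
\end{equation*}
This is exactly the displayed double inequality: the middle quantity is sandwiched, the first ``$\geq$'' being the estimate $\tau_V(z)\geq\tau(y_x,z)$ together with $\tau_V(x)=\tau(y_x,x)$, and the second ``$\geq$'' being the reverse triangle inequality. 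The case $x\in I^-(V)$ is handled identically after reversing the causal structure, which swaps FTC with PTC and the roles of future/past realizing points; one has to be slightly careful with signs since $\tau_V$ is then negative, but the monotonicity statement is the mirror image.

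\medskip

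\textbf{Main obstacle.} The only genuinely delicate point is the compactness argument in Step 1: one must check that the FTC condition, stated as ``$J^-(x)\cap V$ has compact closure \emph{in} $V$'', combined with the closedness of the relation $\leq$ (from global hyperbolicity) and continuity of $\tau$ on the globally hyperbolic space, really does deliver a limit point $y_x\in V$ with $y_x\leq x$ and the sup attained; and then that $\tau_V(x)>0$ upgrades $y_x\leq x$ to $y_x\ll x$. Everything else is a direct application of the reverse triangle inequality \eqref{eq:deftau} and the definition \eqref{eq:deftauV} of $\tau_V$. I would also note that this is essentially Lemma 4.1 of \cite{CaMo:20}, so the argument can be kept brief and cite that reference for the routine verifications.
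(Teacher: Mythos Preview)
Your proof is correct and follows essentially the same approach as the paper's own (commented-out) argument: compactness from the FTC hypothesis together with continuity of $\tau$ (ensured by global hyperbolicity) to realize the supremum, then the reverse triangle inequality for \eqref{eq:tauvzxtau}. The only cosmetic difference is that the paper applies the extreme value theorem directly to the continuous function $\tau(\cdot,x)$ on the compact set $\overline{J^{-}(x)\cap V}\subset V$ (noting $\tau(\cdot,x)\equiv 0$ outside $J^{-}(x)$), whereas you extract a convergent maximizing sequence; these are equivalent.
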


By considering a non-smooth analogue of the gradient flow lines of $\tau_V$, one can obtain a partition into timelike geodesics of the future of $V$ (up to a set of measure zero). We briefly review this construction and refer to \cite[Sec.\;4.1]{CaMo:20} for more details.
\\First, notice that \eqref{eq:tauvzxtau} can be  extended to the whole 
$X^{2}$ by replacing $\tau$ with $\ell$, defined in \eqref{eq:defell}: 
\begin{equation}\label{E:triang}
\tau_{V}(z) - \tau_{V}(x) \geq
\ell(x,z), \qquad \forall x,z \in (I^{+}(V)\cup I^{-}(V)\cup V)^{2}.
\end{equation}
For ease of writing, we will use the following notation 
$$
I^{\pm}(V) 
: = (I^{+}(V)\cup I^{-}(V)\cup V).
$$
We associate to $V$ the following set:
\begin{equation}\label{E:GammaV}
\begin{split}
\Gamma_{V} : = &~ \{ (x,z) \in I^{\pm}(V)^{2} \cap X^{2}_{\leq} \, \colon \, 
 \tau_{V}(z) - \tau_{V}(x)  = \tau(x,z)>0 \} \\
 & \quad \cup \{(x,x) \,:\, x\in I^{\pm}(V)\}.
 \end{split}
\end{equation}
From the 
inequality \eqref{E:triang}, it follows straightforwardly that 
the set $\Gamma_{V}$ is $\ell$-cyclically monotone.
This implies the well-known
alignment along geodesics of the pairs belonging to $\Gamma_{V}$: 
for instance 
if $(x,z) \in \Gamma_{V}$ with $x \neq z$ and $x \in I^{+}(V)$, 
there exist $y \in V, \gamma \in \TGeo(y,z)$ and $t \in (0,1)$ such that 
$$
x = \gamma_{t}, \qquad \tau(y,\gamma_{s}) = \tau_{V}(\gamma_{s})  \quad \forall s\in [0,1], \qquad
(\gamma_{s},\gamma_{t}) \in \Gamma_{V}  \quad \forall s\in [0,t].
$$
An analogous property holds true if $z \in I^{-}(V)$. Again for all the details we refer to \cite{CaMo:20}.
Next we set $\Gamma_{V}^{-1}:=\{(x,y)\,:\, (y,x)\in \Gamma_{V}\}$ and we consider the \emph{transport relation} $R_{V}$ and 
the \emph{transport set with endpoints} $\T_{V}^{end}$
\begin{equation}\label{E:transport}
R_{V} : = \Gamma_{V} \cup \Gamma_{V}^{-1}, \qquad 
\T_{V}^{end} : = P_{1}(R_{V}\setminus \{ x = y \}),
\end{equation}
where $P_1$ denotes the projection on the first coordinate.
The transport relation will be an equivalence relation on a suitable subset of 
$\T_{V}^{end}$, constructed below.
Define the following subsets:
\begin{equation}\label{eq:defendpoints}
\begin{split}
\fa(\T_{V}^{end}) : =&~ \{ x \in \T_{V}^{end} \colon \nexists y \in \T_{V}^{end} \ s.t. \ (y,x) \in \Gamma_{V}, y\neq x \} \\
\fb(\T_{V}^{end}) : =&~ \{ x \in \T_{V}^{end} \colon \nexists y \in \T_{V}^{end} \ s.t. \ (x,y) \in \Gamma_{V}, y\neq x \},
\end{split}
\end{equation}
called the set of \emph{initial} and \emph{final points}, respectively.
Define the \emph{transport set without endpoints} 
\begin{equation}\label{E:nbtransport}
\T_{V} : = \T_{V}^{end} \setminus (\fa(\T_{V}^{end}) \cup \fb(\T_{V}^{end})).
\end{equation}

\begin{lemma}[Lemma 4.4, \cite{CaMo:20}] \label{lem:I+VTV}
If $V\subset X$ is a Borel
achronal timelike complete subset, 
then:
$$ 
I^{+}(V) \cup I^{-}(V)  = \mathcal{T}_{V}^{end}    \setminus V.
$$
\end{lemma}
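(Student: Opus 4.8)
The plan is to prove the set equality $I^{+}(V)\cup I^{-}(V) = \T_{V}^{end}\setminus V$ by a double inclusion, using the definitions in \eqref{E:GammaV}, \eqref{E:transport} together with the existence of maximizing points from \cref{L:initialpoint} and the alignment-along-geodesics property recalled after \eqref{E:GammaV}.

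\textbf{Inclusion $\subseteq$.} Take $x\in I^{+}(V)$ (the case $x\in I^{-}(V)$ is symmetric, using PTC instead of FTC). By \cref{L:initialpoint} there is $y_{x}\in V$ with $\tau_{V}(y_{x})=\tau(y_{x},x)>0$. Since $V$ is achronal, $y_{x}\in V$ means $\tau_{V}(y_{x})=0$, and the inequality \eqref{E:triang} applied to the pair $(y_{x},x)$ gives $\tau_{V}(x)-\tau_{V}(y_{x}) = \tau_{V}(x) \geq \tau(y_{x},x) = \tau_{V}(x)$, so in fact $\tau_{V}(x)-\tau_{V}(y_{x}) = \tau(y_{x},x)>0$, i.e. $(y_{x},x)\in\Gamma_{V}$ with $y_{x}\neq x$. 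Hence $(y_{x},x)\in R_{V}\setminus\{x=y\}$, so $x\in P_{1}(R_{V}\setminus\{x=y\}) = \T_{V}^{end}$. Moreover $x\notin V$ because $x\in I^{+}(V)$ and $V$ is achronal (so $V\cap I^{+}(V)=\emptyset$). Therefore $x\in\T_{V}^{end}\setminus V$.

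\textbf{Inclusion $\supseteq$.} Let $x\in\T_{V}^{end}\setminus V$. By definition of $\T_{V}^{end}$ there is $y$ with $(x,y)\in R_{V}$ or $(y,x)\in R_{V}$ and $x\neq y$; unravelling $R_{V}=\Gamma_{V}\cup\Gamma_{V}^{-1}$, we get a pair in $\Gamma_{V}$ with distinct entries having $x$ as one of its coordinates. Say $(x,z)\in\Gamma_{V}$, $x\neq z$ (the case $(w,x)\in\Gamma_{V}$ is handled the same way by reversing roles). Then $\tau_{V}(z)-\tau_{V}(x)=\tau(x,z)>0$, and in particular $x\in I^{\pm}(V)=I^{+}(V)\cup I^{-}(V)\cup V$. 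Since $x\notin V$ by assumption, $x\in I^{+}(V)\cup I^{-}(V)$, which is what we want. In the other case $(w,x)\in\Gamma_{V}$ with $w\neq x$ one gets $\tau_{V}(x)-\tau_{V}(w)=\tau(w,x)>0$, again forcing $x\in I^{\pm}(V)$ and hence $x\in I^{+}(V)\cup I^{-}(V)$ after removing the possibility $x\in V$.

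\textbf{Main obstacle.} The only genuinely delicate point is handling the endpoints correctly: a priori an element of $\T_{V}^{end}$ could be an element of $V$ itself (if $(x,z)\in\Gamma_{V}$ with $x\in V$, $z\in I^{+}(V)$), which is exactly why the statement subtracts $V$; one must check that after removing $V$ nothing else in $I^{\pm}(V)$ is lost, i.e. that \emph{every} point of $I^{+}(V)$ (not just those lying on a $\tau_{V}$-maximizing geodesic through some interior point) is the first coordinate of a nontrivial pair in $R_{V}$ — this is supplied precisely by \cref{L:initialpoint}, which guarantees the maximizer $y_{x}\in V$ exists thanks to timelike completeness. Conversely one must rule out that a point of $V$ sneaks into $I^{+}(V)\cup I^{-}(V)$, which is immediate from achronality. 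So the proof is essentially a careful bookkeeping of \eqref{E:GammaV}–\eqref{E:nbtransport} against \cref{L:initialpoint}, with achronality of $V$ used to separate $V$ from $I^{\pm}(V)$.
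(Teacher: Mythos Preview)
Your proof is correct and follows essentially the same approach as the paper's: the inclusion $\T_{V}^{end}\setminus V \subset I^{+}(V)\cup I^{-}(V)$ comes directly from the fact that $\Gamma_{V}\subset I^{\pm}(V)^{2}$ by \eqref{E:GammaV}, while the reverse inclusion uses \cref{L:initialpoint} to produce a maximizer $y_{x}\in V$ and achronality of $V$ to ensure $x\notin V$. One trivial remark: from $(y_{x},x)\in R_{V}\setminus\{x=y\}$ you conclude $x\in P_{1}(R_{V}\setminus\{x=y\})$, but strictly speaking $(y_{x},x)$ puts $y_{x}$ in the first coordinate; what you want (and what the paper writes) is $(x,y_{x})\in\Gamma_{V}^{-1}\subset R_{V}$, which of course holds since $R_{V}$ is symmetric.
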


If additionally $X$ is assumed to be timelike (backward and forward) non-branching,
then the transport relation $R_{V}$ 
is an equivalence relation over $\T_{V}$. The next lemma gives a clear description of the equivalences classes.

\begin{lemma}\label{lem:XalphaI}
For each equivalence class $[x]$ of $(\T_{V,}R_{V})$ there exists a  convex set  $I\subset \R$ of the real line  and a bijective map $F:I\to [x]$ satisfying:
\begin{equation}\label{eq:FIsometry}
\tau(F(t_{1}), F(t_{2})) = t_{2}-t_{1}, \quad \forall \,t_{1}\leq t_{2} \in I.
\end{equation}
Moreover, calling $\overline{\{z\in [x]\}}$ the topological closure of $\{z\in [x]\}\subset X$, then 
\begin{equation}\label{eq:closureVSendpoints}
\overline{\{z\in [x]\}}\setminus \{z\in [x]\} = \overline{\{z\in [x]\}}\setminus \T_{V}  \subset \fa(\T_{V}^{e}) \cup \fb(\T_{V}^{e}).
\end{equation}
\end{lemma}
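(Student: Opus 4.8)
The plan is to construct, for a fixed equivalence class $[x]$ of $(\T_V, R_V)$, a linear parametrization by the values of $\tau_V$ itself. First I would observe that $\tau_V$ restricted to $[x]$ is injective: if $z, w \in [x]$ with $z \leq w$, then $(z,w) \in R_V \cap X^2_{\leq}$, and (by the definition of $\Gamma_V$ in \eqref{E:GammaV}, together with the fact that $R_V = \Gamma_V \cup \Gamma_V^{-1}$) either $z=w$ or $\tau_V(w) - \tau_V(z) = \tau(z,w) > 0$; in the latter case $\tau_V(z) \neq \tau_V(w)$. Moreover, any two points of $[x]$ are causally related: indeed they are joined by a chain of relations in $R_V$, and by the alignment-along-geodesics property recalled after \eqref{E:GammaV}, all pairs in $\Gamma_V$ lying on a common geodesic compose consistently, so $[x]$ sits inside a single timelike geodesic line through $x$. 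Hence $\tau_V|_{[x]}$ is a strictly monotone (with respect to the causal order, which is total on $[x]$) injection into $\R$. I would then define $I := \tau_V([x]) \subset \R$ and $F := (\tau_V|_{[x]})^{-1}: I \to [x]$.

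**The isometry property and convexity of $I$.**

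Next I would verify \eqref{eq:FIsometry}. Given $t_1 \leq t_2$ in $I$, set $z_i = F(t_i)$, so $\tau_V(z_i) = t_i$ and $z_1 \leq z_2$; then by the above dichotomy (and since $z_1, z_2 \in [x] \subset \T_V$, excluding the trivial diagonal case when $t_1 < t_2$) we get $(z_1, z_2) \in \Gamma_V$ and $\tau(z_1, z_2) = \tau_V(z_2) - \tau_V(z_1) = t_2 - t_1$. That $I$ is convex (an interval) follows from the alignment structure: if $t_1 < t_2$ are in $I$, the geodesic $\gamma \in \TGeo(\cdot,\cdot)$ realizing $\tau(F(t_1), F(t_2))$ has all its intermediate points $\gamma_s$ satisfying $(\gamma_{s_1}, \gamma_{s_2}) \in \Gamma_V$ and $\tau(y, \gamma_s) = \tau_V(\gamma_s)$ for the appropriate $y \in V$, hence $\gamma_s \in \T_V$ and $\gamma_s R_V F(t_1)$, i.e. $\gamma_s \in [x]$; since $\tau_V$ runs continuously from $t_1$ to $t_2$ along this geodesic by the reverse triangle inequality \eqref{E:triang} applied along $\gamma$, every value in $[t_1, t_2]$ is attained, so $[t_1,t_2] \subset I$. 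The map $F$ is bijective onto $[x]$ by construction (surjective since $I$ is defined as the image, injective by the injectivity of $\tau_V|_{[x]}$).

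**The boundary identification \eqref{eq:closureVSendpoints}.**

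Finally, for \eqref{eq:closureVSendpoints}, I would argue the three pieces in turn. The equality $\overline{\{z \in [x]\}} \setminus \{z \in [x]\} = \overline{\{z \in [x]\}} \setminus \T_V$ reduces to showing $\overline{[x]} \cap \T_V \subseteq [x]$: if $w \in \overline{[x]} \cap \T_V$, then $w$ lies on the closure of the geodesic line containing $[x]$; by $w \in \T_V$ there is $y \neq w$ with $(y,w) \in R_V$, and using that the line through $[x]$ is maximal (each finite sub-arc realizes $\tau$) together with timelike non-branching, $w$ must be $R_V$-related to $x$, hence $w \in [x]$. For the inclusion into $\fa(\T_V^e) \cup \fb(\T_V^e)$: a point $w \in \overline{[x]} \setminus [x]$ is a limit of $F(t_n)$ with $t_n \to \inf I$ or $t_n \to \sup I$ (an endpoint of the interval $I$ not attained); taking, say, the past endpoint, $w \leq F(t)$ for all $t \in I$ and $w \in I^{\pm}(V)$ so by Lemma~\ref{lem:I+VTV} $w \in \T_V^{end} \setminus V$ — but no point $y \in \T_V^{end}$ can satisfy $(y,w) \in \Gamma_V$ with $y \neq w$, because such $y$ would have $\tau_V(y) < \tau_V(w) = \inf I$ yet would be $R_V$-related to $[x]$ by the alignment property, contradicting that $\inf I$ is the infimum; hence $w \in \fa(\T_V^e)$. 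Symmetrically the future endpoint gives $w \in \fb(\T_V^e)$.

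**Main obstacle.**

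The step I expect to be most delicate is establishing that the entire equivalence class $[x]$ genuinely lies on a \emph{single} timelike geodesic line — i.e. that the chain of $R_V$-relations linking two arbitrary points of $[x]$ can be collapsed to a single relation in $\Gamma_V$ (up to orientation), so that the causal order is total on $[x]$ and $\tau$ is additive along it. This is precisely where forward and backward timelike non-branching of $X$ enters, via the alignment-along-geodesics property quoted after \eqref{E:GammaV}: two $\Gamma_V$-pairs sharing an endpoint and both respecting $\tau_V$ must lie on a common geodesic, and non-branching forbids the geodesic from splitting. Once this structural fact is in hand, the rest of the argument — injectivity of $\tau_V|_{[x]}$, the isometry \eqref{eq:FIsometry}, convexity of $I$, and the endpoint analysis — follows by bookkeeping with the reverse triangle inequality and the definitions in \eqref{eq:defendpoints}--\eqref{E:nbtransport}.
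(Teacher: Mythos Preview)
The paper does not actually prove this lemma; it is recalled from the authors' earlier work \cite[Sect.\;4.1--4.2]{CaMo:20} (see the sentence ``We refer to \cite[Sect.\;4.1, 4.2]{CaMo:20} for the missing details'' after Lemma~\ref{lem:Qlevelset}). Your approach---parametrising $[x]$ by the values of $\tau_V$, setting $I:=\tau_V([x])$ and $F:=(\tau_V|_{[x]})^{-1}$, and then reading off \eqref{eq:FIsometry} from the defining relation $\tau_V(z_2)-\tau_V(z_1)=\tau(z_1,z_2)$ on $\Gamma_V$---is exactly the natural one and matches the argument in \cite{CaMo:20}. You also correctly identify the genuine crux: that timelike (forward and backward) non-branching is what forces any two points of $[x]$ to lie on a single maximising timelike geodesic, so that the causal order is total on $[x]$ and $\tau$ is additive there.

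Two small points of care in your endpoint analysis. First, the assertion ``$w\in I^\pm(V)$ so by Lemma~\ref{lem:I+VTV} $w\in\T_V^{end}\setminus V$'' can fail at the past endpoint when $\inf I=0$ and the limit lands on $V$ itself; but then $w\in\fa(\T_V^{end})$ follows directly from achronality of $V$ (nothing in $\T_V^{end}$ can precede $w$ along $\Gamma_V$), so the conclusion survives. Second, the step ``$\tau_V(w)=\inf I$'' uses continuity of $\tau_V$ at $w$, which is not guaranteed globally (only semicontinuity); however along the closure of the $\tau_V$-realising geodesic carrying $[x]$ the function $\tau_V$ is affine in arc-length by \eqref{eq:FIsometry} and the reverse triangle inequality, which is what you actually need. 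With these caveats your sketch is correct.
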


The equivalence classes of 
$R_{V}$ inside $\mathcal T_{V}$ will be called \emph{V-rays} (or transport rays).

Concerning the measurability properties of the sets we have considered so far, 
the set $I^{+}(x) = \{ y \in X \colon \tau(x,y) >0 \}$
is open  by lower semi-continuity  of $\tau$ and the same is valid 
for $I^{-}(x)$.  
Accordingly, $I^{+}(V)=\bigcup_{x\in V}  I^{+}(x)$ is an open subset of $X$, 
and the same is valid for $I^{-}(V)$.
Since
$\tau_{V}$ is $\sup$ of continuous functions,  it is lower semi-continuous.
It follows that the set $\Gamma_{V}$ is Borel measurable (see \eqref{E:GammaV}).
It follows that also  $R_{V}$ is  Borel measurable, yielding  that $\T_{V}^{end}$ defined in \eqref{E:transport} is an analytic set 
(recall that analytic sets are precisely projections of Borel subsets of complete and separable metric spaces and the $\sigma$-algebra they generate is denoted by $\mathcal{A}$, we refer to \cite{Srivastava} for more details).
The transport set $\T_{V}$ defined in \eqref{E:nbtransport} 
can be proved to be an analytic set as well \cite[Lem.\;4.7]{CaMo:20}.

In order to induce a non-trivial measure theoretic decomposition from a partition (see the \Cref{T:disint} below), it is necessary to show the existence of a \emph{measurable selection} of a representative from each equivalence class.
Indeed, the existence of
an $\mathcal{A}$-measurable quotient map $\QQ$ of the equivalence relation $R_{V}$ over $\T_{V}$ can be obtained by a careful use of selection theorems:

\begin{lemma}[Prop.\;4.9, \cite{CaMo:20}]\label{lem:Qlevelset}
There exists an $\mathcal{A}$-measurable quotient map 
$\QQ:\T_{V}\to X$ of the equivalence relation $R_{V}$ over $\T_{V}$, 
  i.e. 
\begin{equation}\label{E:quotient}
\QQ : \mathcal{T}_{V} \to \mathcal{T}_{V}, 
\quad (x,\QQ(x)) \in R_{V}, 
\quad (x,y) \in R_{V} \Rightarrow \QQ(x) = \QQ(y).
\end{equation}
\end{lemma}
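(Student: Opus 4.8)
The plan is to construct the quotient map by a measurable selection argument applied to the equivalence relation $R_V$ restricted to the transport set $\T_V$. First I would recall the general principle: if $E\subset X\times X$ is an equivalence relation on an analytic set $Y\subset X$ whose equivalence classes are (relatively) closed — or can be exhausted by closed pieces — and whose graph is analytic, then a version of the Jankov--von Neumann uniformization theorem (or Kuratowski--Ryll-Nardzewski, applied after a suitable reduction) produces an $\mathcal A$-measurable map $\QQ$ selecting one representative from each class. The setting here fits: by the discussion preceding the statement, $\T_V$ is analytic, $R_V$ is Borel, and by \cref{lem:XalphaI} each equivalence class (ray) is the image of a convex subset $I\subset\R$ under a map $F$ which is an isometry in the sense of \eqref{eq:FIsometry}; in particular each ray is totally ordered by $\tau$ and has a one-dimensional, manageable structure.

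The key steps, in order, would be: (1) Show the graph $R_V\subset X\times X$ is analytic — this is already observed in the excerpt, following from Borel measurability of $\Gamma_V$ and the projection structure. (2) Use the ray structure from \cref{lem:XalphaI} to set up a selection: for each ray, the natural choice would be something canonical, e.g. the point at $\tau_V$-value equal to (or closest to) a fixed reference, but since $\tau_V$ is only lower semicontinuous and $I$ may be open, one instead argues abstractly. Concretely, I would invoke the measurable selection theorem for analytic-graph relations: the map sending $x$ to (a Borel-measurable choice of) the equivalence class, followed by a selector, is $\mathcal A$-measurable. (3) Verify the three defining properties in \eqref{E:quotient}: that $\QQ$ takes values in $\T_V$, that $(x,\QQ(x))\in R_V$, and that $R_V$-equivalent points are sent to the same representative. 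The first follows because the selected representative lies on the same ray, hence in $\T_V$ (the class sits inside $\T_V$ by definition of the restricted relation and by \eqref{eq:closureVSendpoints}, the endpoints being excluded); the second is immediate from the selection being a section of $R_V$; the third is exactly the consistency requirement built into a selector for an equivalence relation.

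The main obstacle I anticipate is the measurability bookkeeping: analytic sets are not closed under complementation, so one cannot blithely apply the classical Kuratowski--Ryll-Nardzewski theorem (which wants Borel or at least a $\sigma$-algebra on which the relevant maps are measurable). The remedy is to work with the $\sigma$-algebra $\mathcal A$ generated by analytic sets, on which the Jankov--von Neumann uniformization theorem does apply, and to check carefully that the "ray map" $x\mapsto [x]$ — or rather an appropriate parametrization of the quotient inside $X$ — has analytic graph. A secondary technical point is handling non-branching: the statement implicitly relies on $X$ being timelike non-branching so that $R_V$ genuinely is an equivalence relation on $\T_V$ (distinct rays do not cross inside $\T_V$), which is needed for the selector to be well defined; I would state this hypothesis explicitly and cite the branching discussion from \cref{def:TNB} and the preceding remarks. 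Since the result is quoted from \cite[Prop.\;4.9]{CaMo:20}, I would in fact simply refer to that proof, sketching the above as the conceptual content.
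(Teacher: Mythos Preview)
The paper does not prove this lemma; it is imported from \cite[Prop.\;4.9]{CaMo:20}, as the lemma header and the sentence immediately preceding it make explicit. Your final recommendation --- to simply refer to that proof, offering the sketch only as conceptual commentary --- is therefore exactly what the paper does.

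One remark on the sketch itself: applying Jankov--von Neumann uniformization directly to the analytic relation $R_V$ yields an $\mathcal A$-measurable map $f$ with $(x,f(x))\in R_V$ for each $x$, but not in general $f(x)=f(y)$ for $R_V$-equivalent $x,y$; a bare selector is not a quotient map. To enforce constancy on equivalence classes one has to use the one-dimensional ray structure more concretely --- essentially your discarded idea of pinning down a canonical $\tau_V$-value on each ray (or exhausting by countably many level sets) to produce a transversal, and then selecting into that. Since you ultimately defer to the cited reference, this is a clarification of the sketch rather than a genuine gap in your proposal.
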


We will denote by $Q:=\QQ(\T_{V}) \subset X$ the quotient set (which is $\mathcal{A}$-measurable),  and by $X_{\alpha}$, with $\alpha\in Q$, the $V$-rays. 
Recall that each $X_\alpha$ is isometric to a real, possibly unbounded, open interval.

We refer to \cite[Sect.\;4.1, 4.2]{CaMo:20} for the missing details.



\subsection{Synthetic Timelike Ricci curvature lower bounds}\label{S:TCD}

We briefly recall the synthetic formulation of timelike Ricci lower bounds  for a  globally hyperbolic measured Lorentzian geodesic space 
$(X,\sfd, \mm, \ll, \leq, \tau)$ as given in \cite{CaMo:20} (after \cite{McCann} and \cite{MoSu}), see also \cite{CaMo:22} for a survey.

\begin{definition}[Measured Lorentzian pre-length space $(X,\sfd,  \mm, \ll, \leq, \tau)$]
A \emph{measured Lorentzian pre-length space} $(X,\sfd, \mm, \ll, \leq, \tau)$ is a Lorentzian pre-length space endowed with a Radon non-negative measure $\mm$. We say that $(X,\sfd, \mm, \ll, \leq, \tau)$ is globally hyperbolic (resp. geodesic) if $(X,\sfd, \ll, \leq, \tau)$ is so.
\end{definition}

We denote $\mathcal{P}_{ac}(X)$ (resp. $\mathcal{P}_{c}(X)$) the space of probability measures absolutely continuous with respect to $\mm$ (resp. the space of probability measures with compact support).
Given $\mu \in \mathcal{P}(X)$
its relative entropy w.r.t. $\mm$ is given by
$$
\Ent(\mu|\mm) = \int_{M} \rho \log\rho \; \mm,
$$
if $\mu = \rho \, \mm$ is absolutely continuous with respect to $\mm$ and $(\rho\log \rho)_{+}$ is  $\mm$-integrable. 
Otherwise we set $\Ent(\mu|\mm) = +\infty$.

A simple application of Jensen inequality using the convexity of $(0,\infty)\ni t\mapsto t \log t$ gives 
\begin{equation}\label{E:jensenE}
\Ent(\mu|\mm)\geq -\log \mm(\supp \, \mu)>-\infty,\quad \forall \mu\in  \mathcal{P}_{c}(X).
\end{equation}
We set  $\Dom(\Ent(\cdot|\mm)):=\{\mu\in \mathcal{P}(X)\,:\, \Ent(\mu|\mm)\in \R\}$ to be the finiteness domain of the entropy.
An important property of the relative entropy is the lower-semicontinuity under narrow convergence.

The following is the definition of 
the synthetic timelike Ricci curvature lower bounds.

\begin{definition}[$\mathsf{TCD}^{e}_{p}(K,N)$ and  $\mathsf{wTCD}^{e}_{p}(K,N)$ conditions]\label{def:TCD(KN)}
Fix $p\in (0,1)$, $K\in \R$, $N\in (0,\infty)$. We say that  a  measured Lorentzian pre-length space $(X,\sfd,\mm, \ll, \leq, \tau)$ satisfies  $\mathsf{TCD}^{e}_{p}(K,N)$ 
(resp. $\mathsf{wTCD}^{e}_{p}(K,N)$)  if the following holds.
For any pair $(\mu_{0},\mu_{1})\in (\Dom(\Ent(\cdot|\mm)))^{2}$ which is   timelike $p$-dualisable   
(resp. $(\mu_{0},\mu_{1})\in [\Dom(\Ent(\cdot|\mm))\cap  \Prob_{c}(X)]^{2}$  
which is   strongly timelike $p$-dualisable) by some 
$\pi\in \Pi^{p\text{-opt}}_{\ll}(\mu_{0},\mu_{1})$,  
there exists an  $\ell_{p}$-geodesic $(\mu_{t})_{t\in [0,1]}$ such that  
the function $[0,1]\ni t\mapsto e(t) : = \Ent(\mu_{t}|\vol_{g})$ is 
semi-convex (and thus in particular it is locally Lipschitz in $(0,1)$) and it satisfies
\begin{equation}\label{eq:conveKN}
e''(t) - \frac{1}{N} e'(t)^{2 } \geq K \int_{X\times X} \tau(x,y)^{2} \, \pi(dxdy),
\end{equation}
in the distributional sense on $[0,1]$.
\end{definition}

Definition \ref{def:TCD(KN)} corresponds to a differential/infinitesimal formulation of the $\mathsf{TCD}^{e}_{p}(K,N)$ condition. In order to have also an integral/global  formulation it is convenient  to introduce  the following entropy (cf. \cite{EKS}) 
\begin{equation}\label{eq:defSN}
U_{N}(\mu|\mm) : = \exp\left(-\frac{\Ent(\mu|\mm)}{N} \right).
\end{equation}
It is straightforward to check that  $[0,1]\ni t\mapsto e(t)$ is semi-convex and  satisfies \eqref{eq:conveKN} if and only if  $[0,1]\ni t\mapsto u_{N}(t):= \exp(- e(t)/N)$ is semi-concave and satisfies
\begin{equation}\label{eq:sN''}
u_{N}''\leq -\frac{K}{N} \|\tau\|^{2}_{L^{2}(\pi)} \,  u_{N}.
\end{equation} 
Set
\begin{equation}\label{eq:deffsfc}
\fs_{\kappa}(\vartheta):=
\begin{cases}
\frac{1}{\sqrt{\kappa}} \sin(\sqrt{\kappa} \vartheta),   & \kappa>0\\
\vartheta, &\kappa=0\\
\frac{1}{\sqrt{-\kappa}} \sinh(\sqrt{-\kappa} \vartheta),   &\kappa<0\\
\end{cases}, \qquad 
\fc_{\kappa}(\vartheta):=
\begin{cases}
\cos(\sqrt{\kappa} \vartheta),   & \kappa\geq 0\\
\cosh(\sqrt{-\kappa} \vartheta),   &\kappa<0\\
\end{cases}, 
\end{equation}
and
\begin{equation}\label{eq:sigmakappa}
\sigma_{\kappa}^{(t)}(\vartheta):=
\begin{cases}
\frac{\fs_{\kappa}(t\vartheta)}{\fs_{\kappa}(\vartheta)}, \quad & \kappa\vartheta^{2}\neq 0 \text{ and } \kappa\vartheta^{2}<\pi^{2} \\
t,\quad & \kappa \vartheta^{2}=0\\
+\infty, \quad &  \kappa \vartheta^{2}\geq \pi^{2}
\end{cases}.
\end{equation}
Note that the function $\kappa\mapsto \sigma_{\kappa}^{(t)}(\vartheta)$ is non-decreasing for every fixed $\vartheta, t$.
With the above notation, the differential inequality \eqref{eq:sN''}  is equivalent to the integrated version (cf. \cite[Lemma 2.2]{EKS}):
\begin{equation}\label{eq:sNconc}
u_{N}(t) \geq \sigma^{(1-t)}_{K/N} \left(\|\tau\|_{L^{2}(\pi)}\right) u_{N}(0) + \sigma^{(t)}_{K/N} \left( \|\tau\|_{L^{2}(\pi)} \right) u_{N}(1).
\end{equation} 
We thus have that both the 
$\mathsf{TCD}^{e}_{p}(K,N)$  and the 
$\mathsf{wTCD}^{e}_{p}(K,N)$ 
can be formulated as 
in terms of \eqref{eq:sNconc}. 
It has recently been shown in \cite[Thm.\;3.35]{Braun} that under the timelike non-branching assumption
$\TCD^e_p$ and $\wTCD^e_p$ are equivalent conditions for any choice of the parameters $K$ and $N$.

By considering $(K,N)$-convexity properties only of those $\ell_{p}$-geodesics 
$(\mu_{t})_{t\in [0,1]}$ where $\mu_{1}$ is a Dirac delta one obtains the following weaker condition \cite{CaMo:20} (see also Sturm \cite{sturm:II} and Ohta \cite{Ohta1}) independent on $p$. 

\begin{definition}
Fix $K\in \R$, $N\in (0,\infty)$. The measured globally hyperbolic Lorentz geodesic space $(X,\sfd, \mm, \ll, \leq, \tau)$ satisfies $\mathsf{TMCP}^{e}(K,N)$ if and only if  for any $\mu_{0}\in \Prob_{c}(X)\cap \Dom(\Ent(\cdot|\mm))$ and for any $x_{1}\in X$ such that  $x\ll x_{1}$ for $\mu_{0}$-a.e. $x\in X$, there exists an $\ell_{p}$-geodesic $(\mu_{t})_{t\in [0,1]}$ from $\mu_{0}$ to  $\mu_{1}=\delta_{x_{1}}$
such that 
\begin{equation}\label{eq:defTMCP(KN)}
U_{N}(\mu_{t}|\mm) \geq \sigma^{(1-t)}_{K/N} \left( \|\tau(\cdot,x_1) \|_{L^2(\mu_0)}\right)\, 
U_{N}(\mu_{0}|\mm), \quad \forall t\in [0,1).
\end{equation}
\end{definition}

As expected, the $\mathsf{wTCD}^{e}_{p}(K,N)$ condition  implies the 
$\mathsf{TMCP}^{e}(K,N)$, see \cite[Prop.\;3.12]{CaMo:20}.
We next recall some useful results 
concerning the existence and uniqueness of optimal plans 
in timelike non-branching $\mathsf{TMCP}^{e}(K,N)$ spaces.

\begin{theorem}[Prop.\;3.19, Thm.\;3.20 and 3.21 in \cite{CaMo:20}]\label{T:1}
Let  $(X,\sfd, \mm, \ll, \leq, \tau)$ be a timelike non-branching, globally hyperbolic Lorentzian geodesic space satisfying $\mathsf{TMCP}^{e}(K,N)$.
\\Let $\mu_{0},\mu_{1}\in \Prob_{c}(X)$, with $\mu_{0}\in \Dom(\Ent(\cdot|\mm))$. Assume that there exists $\pi\in \Pi^{p\text{-opt}}_{\leq}(\mu_{0},\mu_{1})$ such that $ \pi \left( \{\tau>0\} \right)=1$.

Then there exists a unique optimal coupling $\pi\in \Pi^{p\text{-opt}}_{\leq}(\mu_{0},\mu_{1})$ such that $ \pi \left( \{\tau>0\} \right)=1$.  Such a coupling $\pi$ is induced by a map $T$, i.e.,  $\pi=(\id, T)_{\sharp} \mu_{0}$ and 
$$
\ell_{p}(\mu_{0}, \mu_{1} )^{p} =\int_{X} \tau(x,T(x))^{p} \, \mu_{0}(dx).
$$
Moreover there exists a unique $\eta\in  {\rm OptGeo}_{\ell_{p}}(\mu_{0},\mu_1)$  with $ (\ee_{0}, \ee_{1})_{\sharp} \eta \, ( \{\tau>0\})=1$ and such $\eta$ is induced by a map, i.e. there exists ${\mathfrak T}:X\to \TGeo(X)$ such that $\eta={\mathfrak T}_{\sharp} \mu_{0}$; in particular, 
$(\ee_{0}, \ee_{1})_{\sharp} \eta = \pi$.
Finally, the  $\ell_{p}$-geodesic  
$\mu_{t} = (\ee_{t})_{\sharp} \eta$
satisfies
$\mu_{t} = \rho_{t} \mm \ll \mm$.
\end{theorem}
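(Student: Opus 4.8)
The plan is to adapt to the Lorentzian synthetic framework the by-now classical route to optimal maps in (essentially) non-branching spaces with a measure-contraction bound, using the one-dimensional localization of $\TMCP^e(K,N)$. I would first reduce the whole statement to the single \emph{structure claim}: every $\eta\in\OptGeo_{\ell_p}(\mu_0,\mu_1)$ with $(\ee_0,\ee_1)_\sharp\eta(\{\tau>0\})=1$ is induced by a map $\mathfrak T:X\to\TGeo(X)$, that is $\eta=\mathfrak T_\sharp\mu_0$. At least one such $\eta$ exists: lift the given timelike optimal $\pi\in\Pi^{p\text{-opt}}_{\leq}(\mu_0,\mu_1)$ by a measurable selection of maximising geodesics (a routine argument using compactness of the supports and the geodesic property, in the spirit of the existence statement recalled after \cref{def:ellp-DOP}); such an $\eta$ is automatically concentrated on $\TGeo(X)$ and $(\ee_0,\ee_1)_\sharp\eta=\pi$. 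Granting the structure claim, the rest is soft. The set of $\ell_p$-optimal timelike couplings is convex, so if $\pi_1=(\id,T_1)_\sharp\mu_0$ and $\pi_2=(\id,T_2)_\sharp\mu_0$ were both optimal then $\tfrac12(\pi_1+\pi_2)$ would again be an optimal timelike coupling, hence induced by a map, which (disintegrating along the first marginal) forces $T_1=T_2$ $\mu_0$-a.e.; the same convexity argument applied to dynamical plans and disintegrated along $\ee_0$ gives uniqueness of $\eta$; and since $(\ee_0,\ee_1)_\sharp\eta$ is an optimal timelike coupling induced by a map it must coincide with the (now unique) $\pi$, with $T=\ee_1\circ\mathfrak T$ and $\ell_p(\mu_0,\mu_1)^p=\int_X\tau(x,T(x))^p\,\mu_0(dx)$.

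Towards the structure claim, the first ingredient is a Lorentzian crossing lemma for the geodesics carried by $\eta$. By \cref{prop:cicmon<->opt}, $\eta$ is concentrated on a set $\Gamma\subset\TGeo(X)$ whose endpoint pairs form an $\ell^p$-cyclically monotone subset of $X^{2}_{\ll}$. If $\gamma^1,\gamma^2\in\Gamma$ agreed at some interior time, $\gamma^1_{t_0}=\gamma^2_{t_0}$ with $t_0\in(0,1)$, then testing $\ell^p$-cyclical monotonicity on the two endpoint pairs, bounding the swapped costs below through the common point by the reverse triangle inequality, and using the \emph{strict} concavity of $s\mapsto s^p$ for $p\in(0,1)$, forces equality everywhere, i.e.\ $\tau(\gamma^1_0,\gamma^1_1)=\tau(\gamma^2_0,\gamma^2_1)$ and maximality of both concatenations; forward and backward timelike non-branching then give $\gamma^1\equiv\gamma^2$. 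Hence for each $t\in(0,1)$ the evaluation $\ee_t$ is injective on a set of full $\eta$-measure, so (Lusin--Souslin) $\eta=(S_t)_\sharp\mu_t$ for a Borel map $S_t$, where $\mu_t:=(\ee_t)_\sharp\eta$; in particular the coupling has the form $(\ee_0\circ S_t,\ee_1\circ S_t)_\sharp\mu_t$, and it only remains to see that $\ee_0$ is itself $\eta$-essentially injective and that $\mu_t\ll\mm$.

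Both of these come from the one-dimensional localization of $\TMCP^e(K,N)$, which I expect to be the main obstacle. Fixing a Kantorovich potential for $(\mu_0,\mu_1)$, one forms the associated transport set and --- with timelike non-branching exactly as in \cref{lem:XalphaI} --- partitions it into rays, each the image of a real interval under a $\tau$-isometry; correspondingly $\mm$ disintegrates as $\mm=\int\mm_\alpha\,\qq(d\alpha)$, and the synthetic $\TMCP^e(K,N)$ inequality localizes to the statement that, in arc-length coordinates, each $\mm_\alpha$ has a density $h_\alpha$ obeying the one-dimensional $\MCP(K,N)$ concavity bound. Since $\mu_0\in\Dom(\Ent(\cdot|\mm))$ is $\mm$-absolutely continuous, its conditionals $\mu_{0,\alpha}$ are absolutely continuous w.r.t.\ $\mm_\alpha$; the optimal transport acts ray by ray through the (single-valued, monotone) one-dimensional rearrangement, and the bound on $h_\alpha$ propagates absolute continuity to every interpolant $\mu_{t,\alpha}$ with an explicit Jacobian estimate. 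Integrating over $\alpha$ yields $\mu_t=\rho_t\mm\ll\mm$ for $t\in(0,1)$, proving item~3; and single-valuedness of the ray-wise rearrangement makes $\ee_0$ $\eta$-essentially injective, completing the structure claim. The delicate points --- which is where the bulk of the work in \cite{CaMo:20} lies --- are precisely to show that the transport set carries $\mm$-a.e.\ point of the region swept by the transport, to handle measurability of the disintegration and of the rearrangements, and to convert the synthetic inequality for \emph{entropies of interpolated probability measures} into the pointwise one-dimensional \emph{density} inequality, all in the low-regularity Lorentzian setting.
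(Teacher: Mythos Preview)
The paper does not prove this statement; it is quoted without proof from \cite{CaMo:20} (as Prop.~3.19, Thm.~3.20 and~3.21 there) and used as a black box, so there is no argument in the present paper to compare your sketch against.

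On the substance of your proposal: the convexity reductions to a single ``structure claim'' and the crossing lemma (via $\ell^p$-cyclical monotonicity, strict concavity of $s\mapsto s^p$, and forward/backward timelike non-branching) are correct and standard. The problematic step is your appeal to the one-dimensional localization along rays of a Kantorovich potential in order to obtain $\mu_t\ll\mm$ and the $\ee_0$-injectivity. In \cite{CaMo:20} the disintegration results (whose analogues here are \cref{T:disint} and \cref{T:generalDisintegration}) are developed in Section~4, \emph{after} and \emph{relying on} the very statements collected in \cref{T:1}; in addition, those disintegrations are set up for the signed time-separation $\tau_V$ (or, more generally, timelike reverse $1$-Lipschitz functions, i.e.\ $p=1$ potentials), not for $p$-Kantorovich potentials with $p\in(0,1)$, so the ray structure you invoke is not available at this point. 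The route actually taken in \cite{CaMo:20} derives the absolute continuity of the interpolants $\mu_t$ directly from the $\TMCP^e(K,N)$ inequality---a pointwise density bound for transports towards Dirac masses, superposed against a disintegration of $\mu_1$ into points, in the spirit of Rajala's interpolation estimates in the metric setting---and then combines it with the interior-time crossing argument; no localization is used at that stage.
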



\subsubsection{Disintegration of $\mm$ and regularity of conditional measures}
\label{Ss:disintegrationregularity}

The partition in $V$-rays recalled in \cref{Ss:transportrelation} has a natural interplay with the synthetic curvature conditions: via Disintegration Theorem (after Lemma \ref{lem:Qlevelset}) one can  associate to the partition of the transport set a decomposition in conditional measures of the reference measure $\mm$ that inherits the synthetic curvature-dimension properties.

Below, we briefly summarise the results from \cite[Sect.\;4]{CaMo:20}.
We will denote by  $\mathcal{M}_{+}(X)$ the 
space of non-negative Radon measures over $(X,\sfd)$. 
 Moreover, given a Lorentzian pre-length space $(X,\sfd, \ll, \leq, \tau)$, its \emph{causally-reversed structure} $(X,\sfd, \tilde{\ll}, \tilde{\leq}, \tilde{\tau})$ is obtained by setting
$$x\, \tilde{\ll}\, y \Leftrightarrow y \ll x,  \quad x\, \tilde{\leq}\, y \Leftrightarrow y \leq x,  \quad \tilde{\tau}(x,y):=\tau(y,x).$$
In the smooth setting, this procedure corresponds to reverse the sign of the time-orienting vector field. For smooth Lorentzian manifolds, a lower bound on the Ricci curvature for \emph{future-pointing} timelike vectors immediately implies the same lower Ricci bound on any  timelike vector; however this is not the case in the present synthetic setting (for instance, one could consider non-reversible Lorentz-Finsler structures).

\begin{theorem}\label{T:disint}
Let  $(X,\sfd, \mm, \ll, \leq, \tau)$ be a globally hyperbolic timelike non-branching Lorentzian geodesic  space satisfying $\mathsf{TMCP}^{e}(K,N)$, 
assume that the causally-reversed structure satisfies the same conditions and 
let $V\subset X$ be a Borel achronal timelike complete subset. 

Considering $\T_{V}^{end}, \fa(\T_{V}^{end}), \fb(\T_{V}^{end})$ and $\T_{V}$ defined in \eqref{E:transport}, \eqref{eq:defendpoints}, \eqref{E:nbtransport},
then $\mm(\fa(\T_{V}^{end}))=\mm(\fb(\T_{V}^{end})=0$ and the following disintegration formula is valid: 
\begin{equation}\label{E:disintegration}
\mm\llcorner_{\T^{end}_{V}} = 
\mm\llcorner_{\T_{V}} 
= \int_{Q} \mm_{\alpha}\, \qq(d\alpha)
\end{equation}
where $\qq$ is a Borel probability measure over $Q \subset X$ such that 
$\QQ_{\sharp}( \mm\llcorner_{\T_{V}} ) \ll \qq$ and the map 
$Q \ni \alpha \mapsto \mm_{\alpha} \in \mathcal{M}_{+}(X)$ satisfies the following properties:
\begin{itemize}
\item[(1)] for any $\mm$-measurable set $B$, the map $\alpha \mapsto \mm_{\alpha}(B)$ is $\qq$-measurable; \smallskip
\item[(2)] for $\qq$-a.e. $\alpha \in Q$, $\mm_{\alpha}$ is concentrated on $\QQ^{-1}(\alpha) = X_{\alpha}$ (strong consistency); \smallskip
\item[(3)] for $\qq$-a.e. $\alpha \in Q$,  
$\mm_{\alpha}\ll  \L^{1}\llcorner_{X_{\alpha}}$;

\item[(4)] writing
$\mm_{\alpha} = h(\alpha,\cdot) \L^{1}\llcorner_{X_{\alpha}}$, then for $\qq$-a.e. $\alpha\in Q$ it holds that
$h(\alpha,\cdot)\in L^{1}_{loc}(X_{\alpha}, \L^{1}\llcorner_{X_{\alpha}})$; moreover $h(\alpha,\cdot)$ has an almost everywhere 
representative that is continuous on $\overline{X_\alpha}$, and locally Lipschitz and positive in the interior of $X_\alpha$.
\end{itemize}
Moreover, fixed any $\qq$ as above such that $\QQ_{\sharp}( \mm\llcorner_{\T_{V}} ) \ll \qq$, the disintegration is $\qq$-essentially unique in the following sense: if any other 
map $Q \ni \alpha \mapsto \bar \mm_{\alpha} \in \mathcal{P}(X)$
satisfies points (1)-(2), then 
$\bar \mm_{\alpha} = \mm_{\alpha}$ for $\qq$-a.e.  $\alpha \in Q$.
\end{theorem}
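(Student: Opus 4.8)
The statement is a disintegration result for $\mm$ along the partition of $\T_V$ into rays; the strategy is to reduce it to the abstract Disintegration Theorem and then to upgrade the abstract conditional measures to the claimed one-dimensional regularity. First I would verify that the ambient measurability hypotheses are met: by the discussion preceding the statement, $\Gamma_V$ is Borel, $R_V$ is Borel, $\T^{end}_V$ and $\T_V$ are analytic, and (by \cref{lem:Qlevelset}) there is an $\mathcal{A}$-measurable quotient map $\QQ:\T_V\to X$ with quotient set $Q$. This is exactly the setting in which the Disintegration Theorem for measures (in the version valid for analytic sets, using $\mathcal{A}$-measurable quotient maps, as in \cite{Srivastava}) applies to $\mm\llcorner_{\T_V}$, yielding a $\qq$-measurable family $\alpha\mapsto\mm_\alpha$ with $\qq:=\QQ_\sharp(\mm\llcorner_{\T_V})$ (or any larger measure), strong consistency $(2)$, property $(1)$, and $\qq$-essential uniqueness under $(1)$--$(2)$. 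The identity $\mm\llcorner_{\T^{end}_V}=\mm\llcorner_{\T_V}$ is immediate once one knows $\mm(\fa(\T^{end}_V))=\mm(\fb(\T^{end}_V))=0$.

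The two genuinely substantive points are (a) that the initial and final point sets are $\mm$-negligible, and (b) the absolute continuity and local Lipschitz/positivity statements $(3)$--$(4)$ for the conditional densities. For (a) I would argue as in \cite[Sect.\;4]{CaMo:20}: fix a countable family of ``cross-sections'' of the ray structure and use the $\TMCP^e(K,N)$ condition (applied to the forward structure for $\fb$ and to the causally-reversed structure, which by hypothesis also satisfies $\TMCP^e(K,N)$, for $\fa$) together with the timelike non-branching assumption. The key mechanism is that $\TMCP^e$ forces a Bishop--Gromov-type lower bound on the ``flow-out'' of a set of positive measure along the rays; a set of endpoints, being the degenerate $t=0$ (resp. $t=1$) slice of such a flow, must then have measure controlled by $\sigma^{(t)}_{K/N}(\cdot)^N\to 0$, hence be negligible. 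Here one uses \cref{T:1} to ensure the relevant transport is induced by a map and the $\ell_p$-geodesic stays absolutely continuous, so the flow-out estimate is meaningful.

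For (b), the point is to localize the $\TMCP^e(K,N)$ inequality to a single ray. After (a), $R_V$ restricted to $\T_V$ is an equivalence relation (by \cref{lem:XalphaI}), and each class $X_\alpha$ is isometric via $F$ to a real interval $I$. One pushes the disintegration through this isometry to regard $\mm_\alpha$ as a measure on $I\subset\R$. Applying $\TMCP^e(K,N)$ along the ray --- concretely, building $\ell_p$-geodesics whose marginals are supported on a single ray, which is legitimate by timelike non-branching and the ray structure --- yields the one-dimensional curvature-dimension inequality $\MCP(K,N)$ for $\mm_\alpha$ in the parametrization by $\tau$-arclength. A standard argument (the one-dimensional localization machinery, as in the Riemannian/metric-measure theory, see \cite{CaMo:20} and references therein) then shows that such a one-dimensional measure is absolutely continuous with respect to $\L^1\llcorner_{X_\alpha}$ and its density $h(\alpha,\cdot)$ satisfies a concavity-type estimate forcing it to be locally Lipschitz and strictly positive in the interior, with a continuous representative up to $\overline{X_\alpha}$; local integrability is then automatic. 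Finally $\qq$-essential uniqueness is inherited verbatim from the Disintegration Theorem since it only uses $(1)$--$(2)$.

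\textbf{Main obstacle.} The delicate part is the negligibility of endpoints, step (a): one must produce the Bishop--Gromov/$\TMCP$ flow-out estimate in the synthetic Lorentzian setting and handle the measurability of the cross-sections carefully, and this is precisely where timelike non-branching (to make $R_V$ an equivalence relation and to propagate the map $T$ of \cref{T:1} coherently along rays) and the assumption on the causally-reversed structure enter in an essential way. Everything else is either the direct invocation of the Disintegration Theorem or the now-standard one-dimensional localization of curvature-dimension bounds.
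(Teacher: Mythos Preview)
The paper does not prove this theorem; it is stated without proof as a summary of \cite[Sect.\;4]{CaMo:20} (see the sentence immediately preceding the statement). Your outline is correct and faithfully reproduces the strategy of that reference: invoke the abstract Disintegration Theorem via the $\mathcal{A}$-measurable quotient map $\QQ$ of \cref{lem:Qlevelset}, establish $\mm(\fa)=\mm(\fb)=0$ through a $\TMCP^{e}$/non-branching flow-out argument (with the causally-reversed hypothesis handling the second endpoint set), and then localize $\TMCP^{e}$ to each ray to obtain one-dimensional $\MCP(K,N)$, hence the claimed regularity of $h(\alpha,\cdot)$.
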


To localise curvature bounds, a larger family of Lorentz-Wasserstein geodesics 
was needed: we recall a second way to construct $\ell^{p}$-cyclically monotone sets (introduced in \cite{cava:decomposition} for the metric setting and adapted to the Lorentzian framework in  \cite{CaMo:20}).

\begin{proposition}\label{P:cpgeod}  
Let $\Delta \subset  \Gamma_{V}$ 
be such that, for all  $(x_{0},y_{0}),(x_{1},y_{1}) \in \Delta$:
\begin{equation}\label{E:monotone}
(\tau_{V}(x_{0}) - \tau_{V}(x_{1}))(\tau_{V}(y_{0}) - \tau_{V}(y_{1})) \geq 0.  
\end{equation}
Then $\Delta$ is 
$\ell^{p}$-cyclically monotone for each $p\in (0,1)$.
\end{proposition}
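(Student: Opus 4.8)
The plan is to show that the monotonicity condition \eqref{E:monotone} forces the relevant timelike geodesics to be ``nested'' along the level sets of $\tau_V$, and that this nesting is exactly what is needed to verify the $\ell^p$-cyclical monotonicity inequality directly from the definition. First I would recall that since $\Delta \subset \Gamma_V$, every pair $(x,y)\in\Delta$ with $x\neq y$ satisfies $\tau_V(y)-\tau_V(x)=\tau(x,y)>0$, and by the alignment property of $\Gamma_V$ recalled after \eqref{E:GammaV}, the points $x,y$ lie on a common timelike geodesic issuing from $V$ along which $\tau_V$ coincides with the $\tau$-distance from the footpoint and is strictly increasing. In particular, for a pair $(x,y)\in\Delta$ one has $\tau(x,y)=\tau_V(y)-\tau_V(x)$, so everything can be rephrased in terms of the values $\tau_V(x_i),\tau_V(y_i)$, subject to the geometric constraint that these values are realized along maximizing geodesics.

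Next I would fix $N\in\N$ and a family $(x_1,y_1),\dots,(x_N,y_N)\in\Delta$, and compare $\sum_i \ell^p(x_i,y_i)=\sum_i \tau(x_i,y_i)^p$ with $\sum_i \ell^p(x_{i+1},y_i)$ (indices mod $N$). For the ``crossed'' terms, if $x_{i+1}\not\leq y_i$ then $\ell^p(x_{i+1},y_i)=-\infty$ and the inequality is trivial; so the substantive case is when $x_{i+1}\leq y_i$ for all $i$, whence $\tau(x_{i+1},y_i)\leq \tau_V(y_i)-\tau_V(x_{i+1})$ by \eqref{E:triang}. Thus it suffices to prove the purely one-dimensional inequality
\begin{equation*}
\sum_{i=1}^N \big(\tau_V(y_i)-\tau_V(x_i)\big)^p \;\geq\; \sum_{i=1}^N \big(\tau_V(y_i)-\tau_V(x_{i+1})\big)_+^p,
\end{equation*}
knowing only that each $\tau_V(y_i)-\tau_V(x_i)\geq 0$ and that condition \eqref{E:monotone} holds, i.e. that the map $i\mapsto\tau_V(x_i)$ and $i\mapsto\tau_V(y_i)$ are ``comonotone'' on $\Delta$. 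Writing $a_i:=\tau_V(x_i)$ and $b_i:=\tau_V(y_i)$, the statement becomes: if $(a_i-a_j)(b_i-b_j)\geq0$ for all $i,j$ and $b_i\geq a_i$, then $\sum (b_i-a_i)^p \geq \sum (b_i-a_{i+1})_+^p$ for every cyclic permutation. This is the monotone-rearrangement/cyclical-monotonicity fact for the concave cost $c(a,b)=-(b-a)_+^p$ on the real line; I would prove it by sorting the indices so that $a_1\leq a_2\leq\cdots\leq a_N$ (and hence, by comonotonicity, also $b_1\leq\cdots\leq b_N$ up to ties) and then invoking the standard one-dimensional cyclical monotonicity of $c$ under the ``sorted'' coupling, reducing a general cyclic sum to transpositions via an exchange argument, exactly as in \cite{cava:decomposition}.

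The main obstacle, and the place where care is needed, is the bookkeeping with $-\infty$ values and with ties: when $a_i=a_j$ but the corresponding geodesics are distinct, one must make sure that comonotonicity of $\tau_V$-values still yields a consistent simultaneous ordering of the $b_i$, and that the reduction to transpositions does not secretly require $x_{i+1}\leq y_i$ for the reordered indices. This is handled by passing to the formulation with $\ell^p$ (rather than $\tau^p$) on the crossed terms — for which values outside $X^2_\leq$ are simply $-\infty$ and can only help — together with \eqref{E:triang}, which is valid on all of $I^\pm(V)^2$ regardless of causal relation. Once the one-dimensional inequality is established for arbitrary real tuples satisfying the comonotonicity hypothesis, the conclusion for $\Delta$ follows immediately, and $\ell^p$-cyclical monotonicity of $\Delta$ holds for every $p\in(0,1)$ since the one-dimensional cost $(b-a)_+^p$ is concave for precisely this range of exponents.
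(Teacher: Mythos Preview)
The paper does not actually include a proof of this proposition; it is stated as a recall of a construction from \cite{cava:decomposition} and \cite{CaMo:20}. Your overall strategy---case-split on whether all crossed pairs are causal, reduce via \eqref{E:triang} to a one-dimensional inequality in the values $a_i=\tau_V(x_i)$, $b_i=\tau_V(y_i)$, and conclude by an exchange argument using concavity of $t\mapsto t^p$---is exactly the right one and matches the arguments in those references.

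There is, however, a genuine slip in your reduction. You claim it suffices to prove
\[
\sum_i (b_i-a_i)^p \;\geq\; \sum_i (b_i-a_{i+1})_+^p
\]
knowing \emph{only} comonotonicity $(a_i-a_j)(b_i-b_j)\geq 0$ and $b_i\geq a_i$. As stated this is false: take $N=2$, $a_1=0,\,b_1=1,\,a_2=2,\,b_2=3$; then the left side is $2$ while the right side is $3^p$, which exceeds $2$ for $p$ close to $1$. The function $t\mapsto t_+^p$ is \emph{not} concave on $\R$ (only on $[0,\infty)$), so replacing the $-\infty$ of $\ell^p$ by $0$ via the positive part loses exactly the information you need.

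The fix is already implicit in your ``substantive case'': when $x_{i+1}\leq y_i$ for all $i$, the inequality \eqref{E:triang} forces $b_i\geq a_{i+1}$ for all $i$, so the positive parts are redundant and you are working entirely in the domain $\{b\geq a\}$ where $t\mapsto t^p$ is genuinely concave. With this extra constraint carried into the one-dimensional problem, your exchange argument goes through cleanly: sorting so that $a_1\leq\cdots\leq a_N$ (hence $b_1\leq\cdots\leq b_N$ by comonotonicity), each transposition removing an inversion both increases the sum (by the $N=2$ Schur-concavity computation) and preserves the constraint $b_k\geq a_{\sigma(k)}$, since the swapped pair inherits it from the larger of the two original pairs. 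Alternatively, keep the cost as $c(a,b)=(b-a)^p$ for $b\geq a$ and $-\infty$ otherwise throughout the one-dimensional argument, rather than truncating to $(b-a)_+^p$; then your ``$-\infty$ can only help'' remark is actually doing the work you want it to do.
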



\section{Localization  of Timelike Ricci curvature bounds}
\label{S:localization}

We will improve on the results of \cite{CaMo:20} 
concerning the regularity properties of the marginal measures associated to 
the decomposition induced by $\tau_{V}$.

To obtain the estimates for the one-dimensional densities, it is more convenient to use an equivalent 
form of the $\TCD^{e}_{p}$ condition.  This equivalent form of $\TCD^{e}_{p}$, whose Riemannian counterpart is the well known 
$\CD^{*}(K,N)$ condition of Bacher and Sturm \cite{BS10}, has recently been presented also in 
the Lorentzian setting in \cite{Braun} and is denoted by $\TCD^{*}_{p}$.

We will not use the full equivalence between  $\TCD^{e}_{p}$ and 
$\TCD^{*}_{p}$ proven in \cite{Braun} (see also \cite{EKS} 
for the earlier equivalence between $\CD^{e}$ and $\CD^{*}$), 
but merely that $\TCD^{e}_{p}$ implies $\TCD^{*}_{p}$. For readers' convenience
we now include a self-contained proof of this implication. 

\begin{proposition}\label{P:summaryTCD}
Let  $(X,\sfd, \mm, \ll, \leq, \tau)$ be a 
globally hyperbolic, timelike non-branching Lorentzian geodesic space satisfying 
$\mathsf{TCD}^{e}_{p}(K,N)$  for some $p\in (0,1), K\in \R, N\in [1,\infty)$. 
Let $\mu_{0},\mu_{1} \in \Prob_{c}(X)$ with $\mu_{0},\mu_{1} \in \Dom(\Ent(\cdot|\mm))$ and
assume that there exists $\pi\in \Pi^{p\text{-opt}}_{\leq}(\mu_{0},\mu_{1})$ such that $ \pi( \{\tau>0\})=1$.

Then $\pi$ is the unique element  
of $\Pi^{p\text{-opt}}_{\leq}(\mu_{0},\mu_{1})$ concentrated on $\{\tau > 0\}$.
Accordingly, there exists a unique optimal dynamical plan $\eta$ such that 
$(\ee_{0},\ee_{1})_{\sharp}\eta = \pi$.  
Moreover, the  $\ell_{p}$-geodesic  
$\mu_{t} = (\ee_{t})_{\sharp} \eta$
satisfies $\mu_{t} = \rho_{t} \mm \ll \mm$ for every $t\in [0,1]$, and for $\eta$-a.e. $\gamma \in \TGeo(X)$ it holds that
\begin{equation}\label{E:pointwiseTCD}
\rho_{t}(\gamma_{t})^{-\frac{1}{N}} \geq \sigma^{(1-t)}_{K/N} \left(\tau(\gamma_{0},\gamma_{1})\right) 
\rho_{0}(\gamma_{0})^{-\frac{1}{N}} + \sigma^{(t)}_{K/N} \left(\tau(\gamma_{0},\gamma_{1})\right)  
\rho_{1}(\gamma_{1})^{-\frac{1}{N}},
\end{equation}
for all $t\in [0,1]$.
\end{proposition}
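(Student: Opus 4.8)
The first three assertions are essentially a re-packaging of \cref{T:1}. The plan is to first note that $\TCD^{e}_{p}(K,N)$ implies $\wTCD^{e}_{p}(K,N)$ a fortiori (the latter is the same requirement imposed only on the smaller class of strongly timelike $p$-dualisable, compactly supported pairs), which in turn implies $\TMCP^{e}(K,N)$ by \cite[Prop.\;3.12]{CaMo:20}. Hence \cref{T:1} applies to $(\mu_{0},\mu_{1})$: the hypothesis provides a $\pi\in\Pi^{p\text{-opt}}_{\leq}(\mu_{0},\mu_{1})$ with $\pi(\{\tau>0\})=1$, so $\pi$ must be \emph{the} unique such coupling, it is induced by a map $T$, there is a unique $\eta\in\OptGeo_{\ell_{p}}(\mu_{0},\mu_{1})$ with $(\ee_{0},\ee_{1})_{\sharp}\eta$ concentrated on $\{\tau>0\}$, it has the form $\eta=\mathfrak{T}_{\sharp}\mu_{0}$ with $\mathfrak{T}$ assigning to $\mu_{0}$-a.e.\;$x$ a geodesic issuing from $x$, $(\ee_{0},\ee_{1})_{\sharp}\eta=\pi$, and $\mu_{t}=(\ee_{t})_{\sharp}\eta=\rho_{t}\mm\ll\mm$ for all $t\in[0,1]$ (the endpoints being absolutely continuous since $\mu_{0},\mu_{1}\in\Dom(\Ent(\cdot|\mm))$). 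Uniqueness of the optimal dynamical plan inducing $\pi$ is then immediate, since any such plan automatically has $(\ee_{0},\ee_{1})_{\sharp}\eta(\{\tau>0\})=1$.

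The substance of the proposition is the pointwise inequality \eqref{E:pointwiseTCD}, which I would prove by a localization/restriction argument in the spirit of \cite{EKS} (and of the metric prototype \cite{cava:decomposition}): one applies the \emph{global} entropy convexity of $\TCD^{e}_{p}$ not to $\eta$ itself but to small pieces of it, and then differentiates. Fix $t\in(0,1)$ — the cases $t\in\{0,1\}$ being trivial, as then one of $\sigma^{(1-t)}_{K/N},\sigma^{(t)}_{K/N}$ vanishes and the other is $1$, so \eqref{E:pointwiseTCD} is an equality. For a bounded Borel $B\subset X$ with $\mu_{0}(B)>0$, set $\eta_{B}:=\mu_{0}(B)^{-1}\,\eta\llcorner_{\mathfrak{T}(B)}=\mu_{0}(B)^{-1}\,\mathfrak{T}_{\sharp}(\mu_{0}\llcorner_{B})$; by \cite[Prop.\;2.33]{CaMo:20} this is again an $\ell_{p}$-optimal dynamical plan, and timelike non-branching guarantees (again by \cite[Prop.\;2.33]{CaMo:20}) that two geodesics in the support of $\eta$ meeting at an interior time coincide, so $\ee_{s}\circ\mathfrak{T}$ is $\mu_{0}$-essentially injective for $s\in(0,1)$. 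Consequently the marginals $\mu_{s}^{B}:=(\ee_{s})_{\sharp}\eta_{B}$ are absolutely continuous with $\rho_{s}^{B}=\rho_{s}/\mu_{0}(B)$ on $(\ee_{s}\circ\mathfrak{T})(B)$ for $s\in\{0,t\}$, while $\mu_{1}^{B}\leq\mu_{1}/\mu_{0}(B)$, so $\rho_{1}^{B}\leq\rho_{1}/\mu_{0}(B)$ — an inequality pointing in the direction that is favourable below. Then I would apply $\TCD^{e}_{p}(K,N)$ to $(\mu_{0}^{B},\mu_{1}^{B})$, which is timelike $p$-dualisable by $\pi_{B}:=(\ee_{0},\ee_{1})_{\sharp}\eta_{B}$ (using compactness of the supports and continuity of $\tau$), in the integrated form \eqref{eq:sNconc}; by the uniqueness part of \cref{T:1} applied to $(\mu_{0}^{B},\mu_{1}^{B})$ the geodesic appearing there is precisely $(\mu_{s}^{B})_{s}$. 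Substituting $U_{N}=\exp(-\Ent/N)$ together with $\Ent(\mu_{s}^{B}|\mm)=\mu_{0}(B)^{-1}\int_{B}\log\rho_{s}\big((\ee_{s}\circ\mathfrak{T})(x)\big)\,\mu_{0}(dx)-\log\mu_{0}(B)$ for $s\in\{0,t\}$ (and the corresponding $\leq$ for $s=1$, which weakens the right-hand side in the right way since $\sigma^{(t)}_{K/N}\geq0$) and dividing by $\mu_{0}(B)^{1/N}$, one obtains an inequality between exponentials of $B$-averages of the functions $\log\rho_{s}\circ(\ee_{s}\circ\mathfrak{T})$, with coefficients $\sigma^{(1-t)}_{K/N}\big(\|\tau\|_{L^{2}(\pi_{B})}\big)$ and $\sigma^{(t)}_{K/N}\big(\|\tau\|_{L^{2}(\pi_{B})}\big)$. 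Letting $B=B_{k}\downarrow\{x\}$ along a differentiation basis of $(X,\mm)$ (equivalently along a countably generated refining filtration, by martingale convergence), the $B_{k}$-averages converge to the values at $x$ — the functions $\log\rho_{s}\circ(\ee_{s}\circ\mathfrak{T})$ and $\tau(\cdot,T(\cdot))^{2}$ lie in $L^{1}(\mu_{0})$, being $\mu_{0}$-integrable on the compact supports thanks to $\mu_{s}\in\Dom(\Ent(\cdot|\mm))$ — and $\|\tau\|_{L^{2}(\pi_{B_{k}})}\to\tau(\mathfrak{T}(x)_{0},\mathfrak{T}(x)_{1})$; passing to the limit yields \eqref{E:pointwiseTCD} at the given $t$ for $\mu_{0}$-a.e.\;$x$, hence for $\eta$-a.e.\;$\gamma=\mathfrak{T}(x)$. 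Ranging over a countable dense set of times and invoking continuity in $t$ of both sides then gives \eqref{E:pointwiseTCD} for all $t\in[0,1]$.

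The main obstacle is precisely this localization step, where three things must be made to fit together: (i) renormalized restrictions of $\eta$ must remain $\ell_{p}$-optimal dynamical plans (\cite[Prop.\;2.33]{CaMo:20}), so that $\TCD^{e}_{p}$ may legitimately be applied to their endpoints and the ``inner'' geodesic is forced to be the restricted one; (ii) timelike non-branching must be used to identify the restricted densities as $\rho_{s}/\mu_{0}(B)$ for $s\in(0,1)$ and to keep the endpoint error at $s=1$ on the correct side of the inequality; and (iii) one needs a differentiation theorem valid in this low regularity — Besicovitch is unavailable on a general proper metric space, so one resorts to martingale convergence along a fixed countably generated filtration (or differentiates along a disintegration). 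Once these are in place the passage to the limit is routine, since the functions $\sigma^{(\cdot)}_{K/N}$ are continuous in all their arguments and everything takes place inside the fixed compact set $J^{+}(\supp\,\mu_{0})\cap J^{-}(\supp\,\mu_{1})$.
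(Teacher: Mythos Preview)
Your proof is correct and follows essentially the same route as the paper's: cite \cref{T:1} for uniqueness and absolute continuity, then obtain the pointwise density inequality by localizing the entropic $\TCD^e_p$ convexity onto restrictions of $\eta$. Where the paper writes a single sentence deferring the refinement step to \cite[Thm.~3.12]{EKS}, you spell out the martingale-differentiation argument explicitly.

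There is one small technical step the paper handles that you gloss over. When you apply $\TCD^e_p$ to $(\mu_0^B,\mu_1^B)$, the definition only guarantees the existence of \emph{some} $\ell_p$-geodesic satisfying the entropy convexity; you then need this geodesic to be the restricted one $(\mu_s^B)_s$. You invoke ``the uniqueness part of \cref{T:1}'', but that theorem only gives uniqueness among dynamical plans whose endpoint coupling is concentrated on $\{\tau>0\}$, and a priori the $\ell_p$-geodesic supplied by the $\TCD^e_p$ condition need not come from such a plan. The paper closes this gap by first covering $\mu_0$-almost all of $X$ by countably many Borel sets $A_n$ with $A_n\times T(A_n)\subset\{\tau>0\}$, so that \emph{every} causal coupling of the restricted marginals is timelike; this forces the $\TCD^e_p$ geodesic between $\mu_{0,n}$ and $\mu_{1,n}$ to be the unique one, namely the restriction of $\eta$. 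Your differentiation scheme absorbs this easily (refine the filtration inside a fixed $A_n$), but the point should be made explicit.
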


\begin{proof}
The first part of the claim is simply \cref{T:1}.
We are left to prove \eqref{E:pointwiseTCD} 
for the $\ell_{p}$-geodesic induced by the unique optimal dynamical plan $\eta$. 
Fix the map $T$ such that 
$(Id,T)_{\sharp} \mu_{0} = \pi$.
\\Since $\pi(\{\tau > 0 \}) = 1$, there exists  a countable collection of Borel sets 
$A_{n}$ such that 
$$
A_{n}\times T(A_{n}) \subset \{\tau > 0\}, \qquad \mu_{0} \left(\cup_{n \in \N} A_{n}\right) = 1.
$$
Without loss of generality, 
we can assume that 
the sets $\{ A_{n} \}_{n\in \N}$ are pairwise disjoint and $\mu_{0}(A_{n})>0$ 
for each $n \in \N$.
Consider $\pi_{n} : = (Id,T)_{\sharp} \mu_{0}\llcorner_{A_{n}}/\mu_{0}(A_{n})$. Such $\pi_{n}$ is the unique $\ell_{p}$-optimal plan between its marginal measures (that we denote by $\mu_{0,n}$ and $\mu_{1,n}$) and, 
accordingly, $\eta\llcorner_{\ee_{0}^{-1}(A_{n})}/\mu_{0}(A_{n})$ the unique optimal dynamical plan. 
Hence from Definition \ref{def:TCD(KN)} and \eqref{eq:sNconc} it follows that 
$$
U_{N}(\mu_{t,n}|\mm) 
\geq \sigma^{(1-t)}_{K/N} \left(\|\tau\|_{L^{2}(\pi_{n})}\right) U_{N}(\mu_{0,n}|\mm) 
+ \sigma^{(t)}_{K/N} \left( \|\tau\|_{L^{2}(\pi_{n})} \right) U_{N}(\mu_{1,n}|\mm),
$$
for all $t \in [0,1]$ and all $n \in \N$.
From here we can repeat verbatim a classical argument already present in the literature (see \cite[Thm.\;3.12]{EKS}) that yields, by restricting to finer subsets of timelike geodesics 
via $\eta$, the inequality \eqref{E:pointwiseTCD} and therefore the claim.
\end{proof}

\smallskip
\subsection{Localization of timelike Ricci lower bounds to $V$-rays}

The goal of this section is to localize  the timelike Ricci curvature lower bounds $\mathsf{TCD}^{e}_{p}(K,N)$ along the $V$-rays:  
this will be achieved by obtaining the next differential inequality \eqref{eq:DiffIneqCDKN} 
for the densities of the disintegration; in the smooth setting, this procedure corresponds to the Raychaudhuri equations.

\begin{theorem}\label{T:local}
Let  $(X,\sfd, \mm, \ll, \leq, \tau)$ be a timelike non-branching, globally hyperbolic Lorentzian geodesic space 
satisfying $\mathsf{TCD}^{e}_{p}(K,N)$ and assume that the 
causally-reversed structure satisfies the same conditions. 

Let $V\subset X$ be a Borel achronal timelike complete subset
and  consider the disintegration formula given by 
Theorem \ref{T:disint}.

Then, for $\qq$-a.e.\,$\alpha$, the one-dimensional metric measure space 
$(X_{\alpha},|\cdot|, \mm_{\alpha})$ satisfies the classical $\CD(K,N)$; namely,\;writing $\mm_\alpha=h(\alpha, \cdot)\L^{1}\llcorner_{X_{\alpha}}$, then $h(\alpha, \cdot)$ is semi-concave (and thus twice differentiable $\L^{1}$-a.e.  on $X_{\alpha}$) and it satisfies the differential inequality
\begin{equation}\label{eq:DiffIneqCDKN}
\frac{\partial^2}{\partial x^2}\log h(\alpha, x)+\frac{1}{N-1}\left(\frac{\partial}{\partial x}\log h(\alpha, x)\right)^2 \leq -K,
\end{equation}
at any point $x$ in the interior of $X_\alpha$ where $h(\alpha, \cdot)$ is twice differentiable.
\end{theorem}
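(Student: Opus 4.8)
The plan is to transfer the one-dimensional convexity encoded in the pointwise inequality \eqref{E:pointwiseTCD} of \cref{P:summaryTCD} onto each ray $X_\alpha$, and then recognize that inequality as precisely the $\CD(K,N)$ condition for the one-dimensional model in its differential form \eqref{eq:DiffIneqCDKN}. The starting point is the disintegration $\mm\llcorner_{\T_V}=\int_Q \mm_\alpha\,\qq(d\alpha)$ from \cref{T:disint}, together with the fact (also from \cref{T:disint}) that $\mm_\alpha=h(\alpha,\cdot)\L^1\llcorner_{X_\alpha}$ with $h(\alpha,\cdot)$ locally Lipschitz and positive on the interior of $X_\alpha$, which we identify isometrically with an interval of $\R$ via \eqref{eq:FIsometry}.

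\textbf{Step 1: Build test transports inside a single ray.}
Fix two points $x_0<x_1$ in the interior of a ray $X_\alpha$ and small parameters; I would take $\mu_0,\mu_1$ to be (normalized) restrictions of $\mm$ to tiny neighbourhoods of $x_0,x_1$ along the ray. Because $V$ is achronal and timelike complete and $X$ is timelike non-branching, the pairs lying on the same ray are $\ell^p$-cyclically monotone (this is exactly the content of the alignment property of $\Gamma_V$ and \cref{P:cpgeod}, applied to a monotone subset $\Delta$ of $\Gamma_V$ sitting over $X_\alpha$). Hence there is a unique $\ell_p$-optimal plan between $\mu_0$ and $\mu_1$ concentrated on $\{\tau>0\}$, it is induced by a monotone map of the interval, and the associated $\ell_p$-geodesic stays inside $X_\alpha$ and has $(\ee_t)_\sharp\eta=\rho_t\mm$. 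The key point is that, because of strong consistency (point (2) of \cref{T:disint}) and the fact that the transport happens ray-by-ray, the densities $\rho_t$ computed against $\mm$ are, up to the disintegration weight $\qq$, the one-dimensional densities of the pushed-forward measures against $h(\alpha,\cdot)\L^1\llcorner_{X_\alpha}$.

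\textbf{Step 2: Read off the 1D differential inequality.}
Plugging these transports into \eqref{E:pointwiseTCD} and letting the neighbourhoods shrink, one obtains, for $\L^1\otimes\L^1$-a.e. pair $(x_0,x_1)$ in the interior of $X_\alpha$ and every $t\in[0,1]$, an inequality of the form
\begin{equation*}
\big(h(\alpha,x_t)\,|\dot{x}_t|\big)^{-1/N}\geq \sigma^{(1-t)}_{K/N}(|x_1-x_0|)\,\big(h(\alpha,x_0)\big)^{-1/N}+\sigma^{(t)}_{K/N}(|x_1-x_0|)\,\big(h(\alpha,x_1)\big)^{-1/N},
\end{equation*}
where $x_t=(1-t)x_0+tx_1$ is the geodesic in the interval and $|\dot x_t|=|x_1-x_0|$ is its constant speed. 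This says precisely that $(X_\alpha,|\cdot|,\mm_\alpha)$ satisfies the entropic/reduced curvature-dimension condition $\CD(K,N)$ (equivalently $\CD^*(K,N)=\CD(K,N)$ in dimension one) for these transports; by a standard one-dimensional argument (e.g.\ the characterization of $\CD(K,N)$ densities on an interval, cf.\ the $\MCP$-type estimates already recorded in the commented block after \cref{P:cpgeod} and the classical theory of such densities) this is equivalent to $h(\alpha,\cdot)^{1/(N-1)}$ being $K/(N-1)$-concave, i.e.\ $h(\alpha,\cdot)$ is semi-concave, hence twice differentiable $\L^1$-a.e., and satisfies \eqref{eq:DiffIneqCDKN} wherever the second derivative exists. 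Choosing a suitable countable dense set of pairs $(x_0,x_1)$ and using a set of full $\qq$-measure on which all the previous statements hold simultaneously, one gets the conclusion for $\qq$-a.e.\ $\alpha$.

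\textbf{Main obstacle.}
The delicate point is not the one-dimensional ODE bookkeeping but the measure-theoretic matching in Step 1: one must check that for $\qq$-a.e.\ $\alpha$ the optimal transports constructed globally on $X$ between measures supported on the single ray $X_\alpha$ genuinely disintegrate as the one-dimensional optimal transports on $(X_\alpha,|\cdot|,h(\alpha,\cdot)\L^1)$, so that the density ratios appearing in \eqref{E:pointwiseTCD} are exactly the ratios $h(\alpha,x_t)/h(\alpha,x_i)$ (times the Jacobian of the interval map). This requires combining the essential uniqueness of the disintegration, the uniqueness of the optimal dynamical plan from \cref{T:1}/\cref{P:summaryTCD}, and a Fubini-type argument to pass from the ``global'' inequality valid $\eta$-a.e.\ to a ``fiberwise'' inequality valid $\qq$-a.e.\ $\alpha$ and $\mm_\alpha$-a.e.\ along the ray; this is where the non-branching hypothesis and the measurable selection of $\QQ$ from \cref{lem:Qlevelset} are essential.
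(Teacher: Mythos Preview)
Your Steps 1--2 contain a genuine gap that is not merely the ``measure-theoretic matching'' you flag at the end: probability measures supported on a \emph{single} ray $X_\alpha$ are singular with respect to $\mm$, because each ray has $\mm$-measure zero. Hence such $\mu_0,\mu_1$ are not in $\Dom(\Ent(\cdot|\mm))$, and neither the $\TCD^e_p(K,N)$ condition nor \cref{P:summaryTCD} applies to them. You cannot invoke \eqref{E:pointwiseTCD} ray by ray; the curvature information lives only on $\mm$-absolutely continuous measures.

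The paper's remedy is to work with a \emph{bundle} of rays: fix a compact $\bar Q\subset Q$ of positive $\qq$-measure and real parameters $A_0<A_1$, $L_0,L_1>0$, and set
\[
\mu_i:=\int_{\bar Q}\frac{\L^1\llcorner_{X_\alpha\cap[A_i,A_i+L_i]}}{L_i}\,\qq(d\alpha),\qquad i=0,1.
\]
These are $\mm$-absolutely continuous (the density against $\mm$ is $(L_i\,h(\alpha,\cdot))^{-1}$ by the disintegration), so \cref{P:summaryTCD} applies. \cref{P:cpgeod} identifies the unique optimal plan as the ray-wise monotone rearrangement, and then \eqref{E:pointwiseTCD} holds $\eta$-a.e., i.e.\ for $\qq$-a.e.\ $\alpha$ and $\L^1$-a.e.\ base point. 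Continuity of $h(\alpha,\cdot)$ lets one specialize to the endpoints $A_0,A_1$, and varying these over a countable dense set gives the fiberwise inequality for $\qq$-a.e.\ $\alpha$.

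There is a second point you skate over: the pointwise inequality \eqref{E:pointwiseTCD} comes with coefficients $\sigma^{(t)}_{K/N}$ and exponent $1/N$, whereas one-dimensional $\CD(K,N)$ requires $\sigma^{(t)}_{K/(N-1)}$ and exponent $1/(N-1)$. The passage is not automatic. The paper keeps $L_0,L_1$ as free parameters; after writing the inequality in the form
\[
(L_0+L_1)^{1/N}h(\alpha,A_{1/2})^{1/N}\ge 2^{1/N}\sigma^{(1/2)}_{K/N}(A_1-A_0)\big\{L_0^{1/N}h(\alpha,A_0)^{1/N}+L_1^{1/N}h(\alpha,A_1)^{1/N}\big\},
\]
one \emph{optimizes} over $L_0,L_1$ (choosing $L_i\propto h(\alpha,A_i)^{1/(N-1)}$) to upgrade the exponent to $1/(N-1)$, and then uses a coefficient comparison (Bacher--Sturm) plus stability in $K$ to reach the genuine $\CD(K,N)$ differential inequality \eqref{eq:DiffIneqCDKN}. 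Your displayed inequality with the stray factor $|\dot x_t|$ does not capture this mechanism.
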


\begin{proof}
For $x\in \T_{V}$ we will write $R(x)$ to denote its equivalence class in $(\T_{V}, R_{V})$, i.e., the ``ray passing through $x$'' (recall \cref{lem:XalphaI}). For a subset $B\subset \T_{V}$, we denote $R(B):=\bigcup_{x\in B} R(x)$.
\\Let $\bar Q \subset Q$ be an arbitrary compact subset of positive $\qq$-measure for which there
exist $a_{0} < a_{1}$ such that   
\begin{equation*}
\begin{split}
X_{\alpha} \cap \{\tau_{V}= a_{0}\} \neq \emptyset, 
\quad X_{\alpha} \cap \{\tau_{V}= a_{1}\} \neq \emptyset \quad \forall  \alpha \in \bar Q,\\
R(\bar{Q})\cap \tau_{V}^{-1}([a_{0}, a_{1}]) \Subset X,\\
 \{(x,y)\in \Gamma_{V}\,:\, x,y\in R(\bar{Q}), \,  \tau_{V}(x)=a_{0}, \,  \tau_{V}(y)=a_{1} \} \Subset \{\tau>0\}.
\end{split}
\end{equation*}
For any $A_{0},A_{1}\in (a_{0}, a_{1})$  with $A_{0} < A_{1}$,
and $L_{0},L_{1} > 0$ satisfying 
$A_{0} + L_{0} < A_{1} + L_{1} < a_{1}$,  consider the probability measures 
$$
\mu_{0} = \int_{\bar Q} \frac{\mathcal{L}^{1}\llcorner_{X_{\alpha}\cap [A_{0}, A_{0}+L_{0}]}}{L_{0}}\,\qq(d\alpha),
\qquad 
\mu_{1} = \int_{\bar Q} \frac{\mathcal{L}^{1}\llcorner_{X_{\alpha}\cap [A_{1}, A_{1}+L_{1}]}}{L_{1}}\,\qq(d\alpha).
$$
Proposition \ref{P:cpgeod} ensures that the transport plan $\pi$ defined
as the monotone rearrangement along each ray $X_{\alpha}$ of the normalized Lebesgue measure $\mathcal{L}^{1}\llcorner_{X_{\alpha}\cap [A_{0}, A_{0}+L_{0}]}/L_{0}$ to 
$\mathcal{L}^{1}\llcorner_{X_{\alpha}\cap [A_{1}, A_{1}+L_{1}]}/L_{1}$ is 
is $\ell^{p}$-cyclically monotone. Since $\pi(X^{2}_{\ll}) = 1$, we infer that $\pi$ is $\ell_{p}$-optimal thanks to \cref{prop:cicmon<->opt}.

From Theorem \ref{T:disint} it is immediate to observe that $\mu_{0},\mu_{1} \ll \mm$, 
notice indeed that the density $h(\alpha, \cdot)$ is strictly positive in the interior of the transport ray $X_\alpha$, for $\qq$-a.e.\;$\alpha$.
Hence we can invoke Proposition \ref{P:summaryTCD}  to deduce that $\pi$  is the unique element in  $\Pi^{p\text{-opt}}_{\ll}(\mu_{0},\mu_{1})$.  Moreover, there exists a unique optimal dynamical plan $\eta$ such that 
$(\ee_{0},\ee_{1})_{\sharp}\eta = \pi$, and  the  $\ell_{p}$-geodesic  
$\mu_{t} = (\ee_{t})_{\sharp} \eta$
satisfies $\mu_{t} = \rho_{t} \mm \ll \mm$ for every $t\in [0,1]$, and for $\eta$-a.e. $\gamma \in \TGeo(X)$, the concavity estimate \eqref{E:pointwiseTCD} holds.

The $\ell_p$-geodesic $\mu_t$ can be written explicitly. Indeed, consider  
$$
\bar{\mu}_{t} := \int_{\bar Q} \frac{\mathcal{L}^{1}_{X_{\alpha}\cap [A_{t}, A_{t}+L_{t}]}}{L_{t}}\,\qq(d\alpha), 
\quad   A_{t} = A_{0}(1-t) + A_{1} t, \, L_{t} = L_{0}(1-t) + L_{1} t. 
$$
Such $(\bar{\mu}_t)_{t\in [0,1]}$ can be lifted to an optimal dynamical plan $\bar{\eta}$  such that 
$(\ee_{0},\ee_{1})_{\sharp}\bar{\eta} = \pi$. By the uniqueness discussed above, we infer that $\bar{\eta}=\eta$ and thus $\mu_t=\bar{\mu}_t$ for all $t\in [0,1]$.
Since 
$$
\mm\llcorner_{\T_{V}} 
= \int_{Q} \mm_{\alpha}\, \qq(d\alpha)= \int_{Q} h(\alpha,\cdot) \, \L^{1}\llcorner_{X_{\alpha}}\, \qq(d\alpha),
$$
one has that $\mu_{s} = \rho_{s} \mm$, with 
$
\rho_{s}(\alpha,t) = \frac{1}{L_{s} h(\alpha,t)}$,  for all  $t \in [A_{s},A_{s}+L_{s}]$. 
Hence, the concavity estimate \eqref{E:pointwiseTCD} on $\rho_s(\alpha, \gamma_s)$ writes as:
\begin{align*}
(L_{s})^{\frac{1}{N}} h(\alpha, (1-s) t_{0} + s t_{1} )^{\frac{1}{N}}
	\geq 	&	\sigma_{K/N}^{(1-s)}(t_{1}-t_{0}) (L_{0})^{\frac{1}{N}} h(\alpha, t_{0} )^{\frac{1}{N}} \\ & \quad + \sigma_{K/N}^{(s)}(t_{1}-t_{0}) (L_{1})^{\frac{1}{N}} h(\alpha, t_{1} )^{\frac{1}{N}}, 
\end{align*}
for every $s \in [0,1]$, for $\L^{1}$-a.e. $t_{0} \in [A_{0},A_{0} + L_{0}]$ and $t_{1}$ obtained as the image of $t_{0}$ through the monotone rearrangement of $[A_{0},A_{0}+L_{0}]$ to 
$[A_{1},A_{1}+L_{1}]$.
Specializing  the previous inequality for $s = 1/2$ and noticing that  $t_{0} = A_{0} + \tau L_{0}$ gives $t_{1} = A_{1} + \tau L_{1}$, we obtain:
\begin{align*}
&(L_{0} + L_{1})^{\frac{1}{N}} h(\alpha,A_{1/2} + \tau L_{1/2})^{\frac{1}{N}} \\
	&\quad			\geq  
		2^{\frac{1}{N}}\sigma^{(1/2)}_{K/N}( A_{1} - A_{0} + \tau |L_{1} - L_{0}| ) \big\{ (L_{0})^{\frac{1}{N}} h(\alpha,A_{0} + \tau L_{0})^{\frac{1}{N}}\\
        & \qquad \qquad \qquad \qquad \qquad \qquad \qquad \qquad \qquad + (L_{1})^{\frac{1}{N}} h(\alpha, A_{1} + \tau L_{1})^{\frac{1}{N}} \big\},
\end{align*}
for $\L^{1}$-a.e. $\tau \in [0,1]$, where we used the notation $A_{1/2}:=\frac{A_0+A_1}{2}, L_{1/2}:=\frac{L_0+L_1}{2}$. Recalling from Theorem \ref{T:disint} that the map $s \mapsto h(\alpha,s)$ is continuous, we infer that the previous inequality also holds for $\tau =0$:
\begin{equation}\label{E:beforeoptimize}
\begin{split}
&(L_{0} + L_{1})^{\frac{1}{N}} h(\alpha,A_{1/2} )^{\frac{1}{N}}\\
&	\quad 	\geq 
		2^{\frac{1}{N}}\sigma^{(1/2)}_{K/N}( A_{1} - A_{0})
				\left\{ (L_{0})^{\frac{1}{N}} h(\alpha,A_{0})^{\frac{1}{N}} 
				+ (L_{1})^{\frac{1}{N}} h(\alpha,A_{1})^{\frac{1}{N}} \right\},
\end{split}
\end{equation}
for all $A_{0} < A_{1}$  with $A_{0},A_{1}\in (a_0, a_1)$, all sufficiently small $L_{0}, L_{1}$ and $\qq$-a.e. $\alpha \in Q$, 
with exceptional set depending on $A_{0},A_{1},L_{0}$ and $L_{1}$. 

Noticing that all the terms appearing in \eqref{E:beforeoptimize} depend in a continuous way on $A_{0},A_{1},L_{0}$ and $L_{1}$, it follows  from Fubini's Theorem that there 
exists a common exceptional set $N \subset Q$ with the following properties: $\qq(N) = 0$ and for each $\alpha \in Q\setminus N$  the inequality \eqref{E:beforeoptimize} holds true for all  
$A_{0},A_{1},L_{0}$ and $L_{1}$.

Then one can make the following (optimal) choice 
$$
L_{0} : = L \frac{h(\alpha,A_{0})^{\frac{1}{N-1}}  }{h(\alpha,A_{0})^{\frac{1}{N-1}} 
		+ h(\alpha,A_{1})^{\frac{1}{N-1}} }, \qquad 
L_{1} : = L \frac{h(\alpha,A_{1})^{\frac{1}{N-1}}  }{h(\alpha,A_{0})^{\frac{1}{N-1}} 
		+ h(\alpha,A_{1})^{\frac{1}{N-1}} },
$$
for any $L > 0$ sufficiently small, and obtain that 
\begin{equation}\label{E:intermediate}
h(\alpha,A_{1/2} )^{\frac{1}{N-1}}
		\geq 
		2^{\frac{1}{N-1}}\sigma^{(1/2)}_{K/N}( A_{1} - A_{0})^{\frac{N}{N-1}} 
				\left\{  h(\alpha,A_{0})^{\frac{1}{N-1}} + h(\alpha,A_{1})^{\frac{1}{N-1}} \right\}.
\end{equation}
By \cite[Prop.\;5.5]{BS10} we know that for any $K' < \tilde K < K $ there exists $\Theta^{*}> 0$
such that for all $0 \leq \Theta \leq \Theta^{*}$ it holds that $
\sigma_{\tilde K/N}^{(t)}(\theta) \geq \sigma_{K'/(N-1)}^{(t)}(\theta)^{\frac{N-1}{N}} t^{\frac{1}{N}},
$
for all $t \in [0,1]$. Hence 
$$
2^{\frac{1}{N-1}} \sigma_{\tilde K/N}^{(1/2)}(\theta)^{\frac{N}{N-1}} \geq \sigma_{K'/(N-1)}^{(1/2)}(\theta).
$$
Plugging the last inequality into  \eqref{E:intermediate} gives 
$$
h(\alpha,A_{1/2} )^{\frac{1}{N-1}}
		\geq 
		\sigma^{(1/2)}_{K'/N-1}( A_{1} - A_{0})
				\left\{  h(\alpha,A_{0})^{\frac{1}{N-1}} + h(\alpha,A_{1})^{\frac{1}{N-1}} \right\},
$$
for all $A_{0},A_{1}$ sufficiently close. In particular this shows that 
$(X_{\alpha}, |\cdot |, \mm_{\alpha})$ verifies the $\CD_{loc}(K',N)$ condition 
that is easily seen in dimension one to be equivalent to the full $\CD(K',N)$ condition, as they are both equivalent to the differential inequality \eqref{eq:DiffIneqCDKN}, with $K$ replaced by $K'$. 
To prove the claim is then enough to let $K'$ converge to $K$ and invoke the stability of the 
$\CD$ condition \cite{lottvillani, sturm:II} to obtain that $(X_{\alpha}, |\cdot |, \mm_{\alpha})$ verifies $\CD(K,N)$.
\end{proof}

To conclude this part we recall a straightforward consequence of the  $\CD(K,N)$ condition along the $V$-rays:
for all $x_{0},x_{1}\in X_{\alpha}$, 
\begin{equation}\label{E:MCP0N1d}
\left(\frac{\fs_{K/(N-1)}(b-\tau_{V}(x_{1}))}{\fs_{K/(N-1)}(b-\tau_{V}(x_{0}))}\right)^{N-1}
\leq \frac{h(\alpha, x_{1} ) }{h (\alpha, x_{0})} \leq 
\left( \frac{\fs_{K/(N-1)}(\tau_{V}(x_{1}) - a) }{\fs_{K/(N-1)}(\tau_{V}(x_{0}) -a)} \right)^{N-1},  
\end{equation}
with $a< \tau_{V}(x_{0})<\tau_{V}(x_{1})<b$ and $b- a \leq \pi\sqrt{(N-1)/(K\vee 0)}$.
The values $a$ and $b$ correspond to the evaluation of $\tau_V$ at the initial and final point of $X_\alpha$, respectively, whenever they are finite.
In other words, for $\qq$-a.e. $\alpha\in Q$, the one-dimensional metric measure space 
$(X_{\alpha},|\cdot|, \mm_{\alpha})$ 
also satisfies the weaker $\MCP(K,N)$.

\section{Timelike Minkowski content and its properties}
\label{S:isoperimetric}

The goal of the next definition is to provide a notion of ``area" for an achronal set $A\subset X$.
The rough idea is to use the signed time-separation function $\tau_A$ from $A$ to define a  ``future $\epsilon$-tubular neighbourhood" of $A$, and then define the ``area of $A$" as the first variation of the volume of such future-tubular neighbourhoods. This can be seen as a Lorentzian counterpart of the outer Minkowski content in metric measure spaces.

\begin{definition}[Timelike Minkowski content]\label{D:areaachronal}
Let  $A \subset X$ be a Borel achronal set and consider  
the signed time-separation function $\tau_{A}$ from $A$, see \eqref{eq:deftauV}.
We define the \emph{future Minkowski content} of $A$ by 
\begin{equation}\label{E:future}
\mm^{+}(A) : = \inf_{U \in \mathcal{U}} \limsup_{\ve \to 0} \frac{\mm( \tau_{A}^{-1}((0,\ve)) \cap U) }{\ve}, \quad 
\mathcal{U}: = \{ U \subset X \colon  U \text{ open}, \ A\subset U \}.
\end{equation}
We define the \emph{past Minkowski content} of $A$ by 
\begin{equation}\label{E:past}
\mm^{-}(A) : = \inf_{U \in \mathcal{U}}\limsup_{\ve \to 0} \frac{\mm( \tau_{A}^{-1}((-\ve,0))  \cap U)}{\ve}.
\end{equation} 
\end{definition}

The presence of the infimum over the collection $\mathcal{U}$ of open sets containing $A$ is necessary to avoid infinite volume of the future $\ve$-enlargement of $A$ with respect to 
$\tau$; indeed, tipically (e.g. in Minkowski spacetime), $\tau_{A}^{-1}((0,\ve))$ has infinite volume for every $\ve>0$.

\begin{remark}[Timelike Minkowski content equals area in the smooth framework]
In the smooth framework, the timelike Minkowski content 
can be related  to the classical area of $A$, as illustrated below. 
If $(M,g)$ is a globally hyperbolic spacetime and 
$A \subset M$ a smooth, spacelike, acausal and future causally complete hypersurface, then 
the signed time-separation function $\tau_A$ 
is smooth on $I^+(A)$ -- outside of a set of measure zero.
As $\tau_A$ has timelike gradient $\nabla \tau_A$ with $g(\nabla \tau_A,\nabla \tau_A) = -1$, the level sets $\tau_A^{-1}(t)$ are spacelike hypersurfaces of 
$I^+(A)$,  for almost every $t>0$. Denoting by $A_t = \tau_A^{-1}(t)$, 
coarea formula (see for instance \cite[Prop.\;3]{TreudeGrant}) implies that 
$$
\vol_g\llcorner_{I^+(A)} = 
\int_{(0,+\infty)} \vol_{g_t} \, dt, 
$$
where $\vol_{g_t}$ is the volume measure induced by $g_t$, the Riemannian metric induced by $g$ over $A_t$.  
Hence, in the smooth setting above, the timelike Minkowski content coincides with the area induced by the ambient Lorentzian metric $g$.

Concerning the case when $A$ is a smooth null hypersurface, then  
$\mm^+(A)$ has to be zero like the induced volume. Let us briefly sketch the argument. If $x \in I^+(A)$, 
then the supremum defining $\tau_A(x)$ 
cannot be realized as a maximum. Otherwise, by the classical first variation argument, the 
optimal path from $x$ to $A$ has to be a geodesic normal to $A$; but since $A$ is null, the normal directions are contained in the tangent space of $A$. This forces the geodesic to never leave $A$ and yields a contradiction. It follows that all the optimal directions should leave $A$ from its ``boundary" (of higher codimension). Because of the scaling limit in the definition, such a lower dimensional contribution  is not detected by the timelike Minkowski content which thus has to vanish.
\end{remark}

We will use the localization associated to a timelike complete achronal set $V$ to bound from above the 
future and past Minkowski content of an achronal set $A$.
To exclude lightlike variations, we introduce a stronger condition for achronal sets.

\begin{definition}[Empty future $V$-boundary, $\partial_V^+ A = \emptyset$]\label{D:V-boundary}
Let $V\subset X$ be a timelike complete achronal set.
We say that an achronal set $A \subset I^{+}(V)$  has \emph{empty future $V$-boundary}, and we write $\partial_V^+ A = \emptyset$, if the following property is satisfied:
for every $x \in I^{+}(A)$ and every geodesic $\gamma : [0,1] \to X$ with $\gamma_{0} \in V$, $\gamma_{1} = x$,   such that $\tau_{V}(x) = \tau(\gamma_{0},x)$, 
it holds that $\gamma_{[0,1]} \cap A \neq \emptyset$.

If $A$ satisfies the reversed condition in the past we say that $A$ has \emph{empty past $V$-boundary} and we write $\partial_V^- A = \emptyset$. In case $A$ has both past and future $V$-boundaries empty, then we say that $A$ has empty $V$-boundary and we write $\partial_V A= \emptyset$.
\end{definition}

It is natural to compare \cref{D:V-boundary} with the fundamental notion of Cauchy hypersurface. 
Recall that a 
Cauchy hypersurface is a closed achronal set intersected exactly once by any inextendible causal curve \cite[Sect.\;8.3]{Wald}.
In case $X$ is a smooth manifold with a continuous Lorentzian metric, then a Cauchy hypersurface is a closed  achronal topological hypersurface \cite[Prop.\;5.2]{SaC0}. 

\begin{lemma}\label{L:Cauchyboundary}
Let  $V\subset X$ be a Borel achronal timelike complete subset, and let  $A\subset X$ be a Cauchy hypersurface. Then $A$ has empty $V$-boundary.
\end{lemma}

\begin{proof}
Let $\gamma$ be a geodesic as in \cref{D:V-boundary} and let $\bar \gamma$ to be any maximal causal extension of $\gamma$. 
Then, by definition, $\bar \gamma$ has to meet $A$ 
and by the reverse triangle inequality
$\gamma$ has to meet $A$. 
\end{proof}

\begin{remark}
An elementary example (taking, for instance, a $Y$ shaped spacetime) shows that the reverse implication of \cref{L:Cauchyboundary} fails to be valid. 
\end{remark}
For the dimensional reduction we need also to specify the following notation. Let $(I,|\cdot|, \nu)$ be a one-dimensional metric measure space  
with $I$ a closed interval and $\nu$ a non-negative Radon measure.
For a Borel set  $A \subset  I$, we denote
\begin{equation}\label{E:onedright}
\nu^{+}(A) := \limsup_{\ve \to 0} \frac{\nu(\cup_{x\in A}(x,x+\ve))}{\ve}, \qquad 
\nu^{-}(A) := \limsup_{\ve \to 0} \frac{\nu(\cup_{x\in A}(x-\ve,x))}{\ve}.
\end{equation}
We will say that $\nu^{+}(A)$ (resp.\;$\nu^{-}(A)$) is the future (resp.\;past) Minkowski content of $A$.
Note that if $A$ is bounded and $\nu = f(x)\,dx$ with $f$ continuous, then $\limsup$ in \eqref{E:onedright} is actually a limit;  moreover, if $A=[a,b]$ is a bounded interval (closeness is not important here), then $\nu^+(A)=f(b)$.
\subsection{Properties of timelike Minkowski contents}

\subsubsection{The unbounded case}\label{SS:UnboundedA}
In this section, we study the timelike Minkowski content of a \emph{possibly unbounded} Borel achronal set.

\begin{proposition}\label{P:main1}
Let $(X,\sfd, \mm, \ll, \leq, \tau)$ be a timelike nonbranching, globally hyperbolic Lorentzian geodesic space satisfying $\mathsf{TCD}^{e}_{p}(K,N)$ and assume that the causally reversed structure satisfies the same conditions. 
Let $V\subset X$ be a complete Borel achronal timelike complete subset and consider the disintegration given by
Theorem \ref{T:disint}. Then the following hold: 
\begin{itemize}
\item For any Borel achronal set $A \subset I^{+}(V)$ with  $\inf_{x\in A} \tau_V(x)>0$ and $\partial_V^+A = \emptyset$, it holds that 
\begin{equation}\label{E:inequality}
\mm^{+}(A) \leq \int_{Q} \mm_{\alpha}^{+}(A\cap X_{\alpha}) \,\qq(d\alpha), 
\end{equation}
where we adopt the notation \eqref{E:onedright} for $\mm_{\alpha}^{+}(A\cap X_{\alpha})$.

If $A \subset I^{-}(V)$ is a Borel, achronal set with $\inf_{x\in A} -\tau_V(x)>0$ and $\partial_V^-A = \emptyset$, then \eqref{E:inequality} holds replacing $\mm^{+}(A)$ by 
$\mm^{-}(A)$ and $\mm^{+}_{\alpha}$ by $\mm^{-}_{\alpha}$.

\item The following inequality holds true
\begin{equation}\label{E:goq}
\mm^{+}(V)  \geq \int_{Q} \mm_{\alpha}^{+}(V\cap \overline{X_{\alpha}}) \,\qq(d\alpha).
\end{equation}
The inequality \eqref{E:goq} remains valid if we replace $\mm^{+}(V)$ by 
$\mm^{-}(V)$ and $\mm^{+}_{\alpha}$ by $\mm^{-}_{\alpha}$ (recall \eqref{E:onedright}).
\end{itemize}
\end{proposition}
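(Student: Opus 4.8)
\textbf{Proof plan for \cref{P:main1}.}

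The plan is to decompose the future $\ve$-enlargement of $A$ fiberwise along the partition into rays $\{X_\alpha\}_{\alpha\in Q}$, and to compare its $\mm$-measure with the one-dimensional enlargements $\mm_\alpha^+(A\cap\overline{X_\alpha})$ using the disintegration formula \eqref{E:disintegration}. The key structural observation is that, under the assumption $\partial_V^+ A=\emptyset$, the set $A$ intersects \emph{every} ray $X_\alpha$ that meets $I^+(A)$: indeed, if $x\in I^+(A)\cap X_\alpha$, then a $\tau_V$-maximizing geodesic $\gamma$ from $V$ to $x$ lies (by the alignment property of $\Gamma_V$ recalled after \eqref{E:GammaV}) along the ray $X_\alpha$, and by $\partial_V^+ A=\emptyset$ it must meet $A$. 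Since $A$ is achronal and each ray is totally ordered by $\tau_V$ (indeed $\tau$-isometric to an interval, by \cref{lem:XalphaI}), the intersection $A\cap\overline{X_\alpha}$ is a single point $x_\alpha$, and crucially the points of $X_\alpha$ future-related to $A$ in the ambient sense are exactly those with $\tau_V$-coordinate larger than that of $x_\alpha$. Therefore the fiber of $\tau_A^{-1}((0,\ve))$ over $\alpha$ is contained in the one-dimensional future $\ve$-enlargement of $\{x_\alpha\}$ inside $X_\alpha$ (here one uses $\tau_A(y)\ge$ the internal ray-distance from $x_\alpha$ to $y$, a consequence of the reverse triangle inequality).

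With this fiberwise inclusion in hand, I would argue as follows. Fix an open $U\supset A$; for each $\ve$, Fubini/Disintegration gives
\[
\mm\big(\tau_A^{-1}((0,\ve))\cap \T_V\big) \;=\; \int_Q \mm_\alpha\big(\tau_A^{-1}((0,\ve))\cap X_\alpha\big)\,\qq(d\alpha)\;\le\; \int_Q \mm_\alpha\Big(\textstyle\bigcup_{x\in A\cap\overline{X_\alpha}}(x,x+\ve)\Big)\,\qq(d\alpha),
\]
where on the right I interpret $X_\alpha$ as a real interval with $\tau_V$ as coordinate and use the inclusion just described together with $\inf_A\tau_V>0$ to stay in the interior of the rays where $h(\alpha,\cdot)$ is continuous and positive. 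Dividing by $\ve$, taking $\limsup_{\ve\to 0}$, and applying Fatou's lemma (the integrand is controlled uniformly for small $\ve$ on the relevant compact portion of the rays, using the explicit density bounds \eqref{E:MCP0N1d}) yields
\[
\limsup_{\ve\to 0}\frac{\mm(\tau_A^{-1}((0,\ve))\cap \T_V)}{\ve}\;\le\;\int_Q \mm_\alpha^+(A\cap\overline{X_\alpha})\,\qq(d\alpha).
\]
Since $\mm(X\setminus\T_V)$ restricted to $I^+(V)$ is zero by \cref{lem:I+VTV} and \cref{T:disint} (the endpoint sets $\fa,\fb$ are $\mm$-null), replacing $\tau_A^{-1}((0,\ve))$ by $\tau_A^{-1}((0,\ve))\cap U$ for a well-chosen $U$ and optimizing over $U\in\mathcal U$ gives \eqref{E:inequality}. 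The past version is identical after reversing the causal structure. For the reverse inequality \eqref{E:goq} for $V$ itself, the point is opposite: here $V$ may a priori meet each ray, but one only knows $V\cap\overline{X_\alpha}$ contains the initial point of the ray; the fiberwise one-dimensional enlargement $\bigcup_{x\in V\cap\overline{X_\alpha}}(x,x+\ve)$ is now \emph{contained} in $\tau_V^{-1}((0,\ve))\cap X_\alpha$ (because moving forward along a ray from a point of $V$ lands in $I^+(V)$ with $\tau_V$ at least the internal distance), so the same disintegration identity combined with Fatou (this time bounding the integral of a $\liminf$ from below, or simply using monotone/dominated convergence on each fiber since $h(\alpha,\cdot)$ is continuous) produces $\int_Q\mm_\alpha^+(V\cap\overline{X_\alpha})\,\qq(d\alpha)\le \limsup_\ve \mm(\tau_V^{-1}((0,\ve)))/\ve$, and passing to the infimum over $U$ only increases the left-hand side of the definition, giving \eqref{E:goq}.

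The main obstacle I anticipate is \emph{measurability and the interchange of limit with integral}: one must check that $\alpha\mapsto \mm_\alpha(\tau_A^{-1}((0,\ve))\cap X_\alpha)$ and $\alpha\mapsto \mm_\alpha^+(A\cap\overline{X_\alpha})$ are $\qq$-measurable (the first follows from property (1) of \cref{T:disint} applied to the Borel set $\tau_A^{-1}((0,\ve))$; the second requires writing $\mm_\alpha^+$ as a countable $\limsup$ over rational $\ve$ and invoking continuity of $h(\alpha,\cdot)$), and that Fatou's lemma applies with the correct inequality direction in each of the two cases. A secondary subtlety is the role of the open neighborhood $U$: one needs that for the purposes of the $\limsup$ the relevant mass is carried by $\tau_A^{-1}((0,\ve))\cap\T_V$, and that a single $U$ (e.g.\ a suitable neighborhood of $\overline{R(\bar Q)}\cap\tau_V^{-1}([\inf_A\tau_V,\cdot])$ intersected with where the rays through $A$ live) suffices to realize the infimum up to the fiberwise bound — this is where the hypothesis $\inf_{x\in A}\tau_V(x)>0$ and the compactness properties of globally hyperbolic geodesic spaces recalled in \cref{Ss:Lorentzlength} enter. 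Everything else is a routine consequence of the reverse triangle inequality and the one-dimensional structure of the rays.
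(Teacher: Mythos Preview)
Your proposal is correct and follows essentially the same route as the paper's proof: the fiberwise inclusion $\tau_A^{-1}((0,\ve))\cap X_\alpha\subset (A\cap\overline{X_\alpha})^\ve$ obtained from $\partial_V^+A=\emptyset$ and achronality, the disintegration formula, and the reverse Fatou lemma with the density bound \eqref{E:MCP0N1d} supplying the dominating function for the first claim; for the second claim on $V$ itself, the paper likewise uses the opposite fiberwise inclusion together with the ordinary Fatou lemma. One minor simplification worth noting: for the first inequality the paper does not introduce the open set $U$ at all --- the estimate is carried out directly for the full $\tau_A^{-1}((0,\ve))$, which a posteriori shows (see \cref{R:opensets}) that the infimum over $U$ in the definition of $\mm^+(A)$ is superfluous for sets with empty future $V$-boundary, so your anticipated difficulty about choosing $U$ does not in fact arise.
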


\begin{proof}
{\bf Step 1.}
We will prove the first claim for $A \subset I^{+}(V)$. The one for $A \subset I^{-}(V)$ follows by reversing the causal structure.

From $A \subset I^+(V)$, it follows that $I^+(A) \subset I^+(V)$ 
implying the inclusion $\tau_{A}^{-1}((0,\ve)) \subset I^{+}(V)$.
Therefore applying \cref{lem:I+VTV} and  \cref{T:disint}  to $V$, we get
$$
\mm(\tau_{A}^{-1}((0,\ve))) = \int_{Q} \mm_{\alpha}(\tau_{A}^{-1}((0,\ve)) \cap X_{\alpha}) \,\qq(d\alpha).
$$
Consider $X_{\alpha}$ such that $\tau_{A}^{-1}((0,\ve)) \cap X_{\alpha} \neq \emptyset$. By definition, it holds that
\begin{align*}
\tau_{A}^{-1}((0,\ve)) \cap X_{\alpha} = &~ \{ y \in X_{\alpha} \cap I^{+}(A) \mid  \tau (x,y) < \ve, \forall x \in A \}.
\end{align*}
Since $\partial_V^+A = \emptyset$, necessarily 
 $A \cap \overline{X_{\alpha}} \neq \emptyset$; in fact, 
 $A \cap X_{\alpha} \neq \emptyset$, since $A\subset I^+(V)$ and $X_\alpha$ is a $V$-ray.
As  $A$ is achronal 
there cannot be two distinct points in $A \cap X_{\alpha}$. Therefore $A \cap X_{\alpha} = \{ a_{\alpha}\}$
and
\begin{align*}
\tau_{A}^{-1}((0,\ve)) \cap X_{\alpha} \subset &~ \{ y \in X_{\alpha} \cap I^{+}(A) \mid  \tau (a_{\alpha},y) < \ve \} \\
= &~
(A \cap X_{\alpha})^{\ve}\cap X_{\alpha},
\end{align*}
where with $(A \cap X_{\alpha})^{\ve}\cap X_{\alpha}$ we denote the right $\ve$-enlargement of the set $A\cap 
X_{\alpha}$
in the metric measure space 
$(\overline{X_{\alpha}},|\cdot|,\mm_{\alpha})$,  i.e.,\;in the sense of \eqref{E:onedright}. Hence,
\begin{equation}\label{eq:mtauA}
\frac{\mm(\tau_{A}^{-1}((0,\ve))) }{\ve} 
\leq  \int_{Q} \frac{\mm_{\alpha} ((A\cap X_{\alpha})^{\ve})}{\ve} \,\qq(d\alpha), \quad \forall \ve>0.
\end{equation}
Since $A \cap X_{\alpha}  = \{ a_{\alpha}\}$, then $\mm_{\alpha}(A) = \mm_{\alpha}(A \cap X_{\alpha}) = 0$. 
To conclude the argument, we wish to use Fatou's Lemma in order to pass to the limit in the right hand side of \eqref{eq:mtauA}. To this aim, we look for a function  $g \in L^{1}(Q,\qq)$ such that
$$
\frac{\mm_{\alpha} ((A\cap \overline{X_{\alpha}})^{\ve}) }{\ve} \leq g(\alpha), \quad \text{for $\qq$-a.e. $\alpha\in Q$}.
$$
By applying \eqref{E:MCP0N1d} (taking as initial point $0$ that can be identified with $X_{\alpha} \cap V$) we obtain for all $\ve\in (0, \ve_0(K,N))$:
\begin{align*}
\frac{\mm_{\alpha} ((A\cap X_{\alpha})^{\ve}) }{\ve} 
&~ = \frac{1}{\ve}\int_{(a_{\alpha},a_{\alpha}+\ve)}  h(\alpha,s)\,ds \\ 
&~  \overset{\eqref{E:MCP0N1d}}{\leq} \frac{h(\alpha,a_{\alpha})}{\ve}\int_{(a_{\alpha},a_{\alpha}+\ve)} \left( \frac{\fs_{K/(N-1)}(s) }{\fs_{K/(N-1)}(a_\alpha)} \right)^{N-1}  \,ds \\
&~ \leq  C(K,N,\inf_A\tau_V)\; h(\alpha,a_{\alpha}) \\
&~ =  C(K,N,\inf_A\tau_V)\; \mm_{\alpha}^{+}(A\cap X_{\alpha}).
\end{align*}
If $Q \ni \alpha \mapsto \mm_{\alpha}^{+}(A\cap X_{\alpha})$ is $\qq$-integrable, then we can choose  $g(\alpha):=  C(K,N,\inf_A\tau_V)\;\mm_{\alpha}^{+}(A\cap X_{\alpha})$ as majorant and use Fatou's Lemma to pass to the limit in \eqref{eq:mtauA} and obtain 
$$
\mm^{+}(A) \leq \int_{Q} \mm_{\alpha}^{+}(A\cap X_{\alpha})\,\qq(d\alpha).
$$
If $\mm_{\alpha}^{+}(A\cap X_{\alpha})$ is not $\qq$-integrable then the claim holds trivially.

\smallskip
{\bf Step 2.}
We now turn to the second claim.
Fix any ray $X_\alpha$ from the disintegration associated to $V$. 
Since $V$ is achronal, 
there exists a unique $x_{\alpha}$ such that $V \cap \overline{X_{\alpha}} = \{ x_{\alpha}\}$ 
and therefore  for any $y \in X_{\alpha}$  
it holds that $\tau(x_{\alpha}, y ) = \tau_{V}(y)=  \sup_{x \in V} \tau(x,y)$.
Hence:
\begin{align*}
\tau_{V}^{-1}((0,\ve)) \cap X_{\alpha} = &~ \{ y \in X_{\alpha} \mid  0 < \tau_{V} (y) < \ve\}  \\
= &~
\{ y \in X_{\alpha} \mid  0 < \tau (x_{\alpha},y) < \ve \} \\
= &~
(V \cap \overline{X_{\alpha}})^{\ve}\cap X_{\alpha},
\end{align*}
where by
$(V \cap \overline{X_{\alpha}})^{\ve}\cap X_{\alpha}$ we denote the right $\ve$-enlargement of the set $V\cap \overline{X_{\alpha}}$
in $X_{\alpha}$, see \eqref{E:onedright}.
If $U$ is any open set containing $V$, then 
$\tau_{V}^{-1}((0,\ve)) \cap X_{\alpha}\cap U = (V \cap \overline{X_{\alpha}})^{\ve}\cap X_{\alpha} \cap U$.

Hence (recall that $\mm(V) = 0$): 
$$
\frac{\mm(\tau_{V}^{-1}((0,\ve)) \cap U)}{\ve} 
=  \int_{Q} \frac{\mm_{\alpha} ((V\cap \overline{X_{\alpha}})^{\ve} \cap U) }{\ve} \,\qq(d\alpha).
$$
Using  Fatou's Lemma we deduce that
$$
\liminf_{\ve \to 0} \frac{\mm(\tau_{V}^{-1}((0,\ve) \cap U)}{\ve} \geq  
\int_{Q} \mm_{\alpha}^{+}(V\cap \overline{X_{\alpha}}) \,\qq(d\alpha),
$$
notice indeed that the dependence on $U$ on the right hand side disappears after the liminf. 
Finally, taking the infimum over all open sets $U$ containing $V$, we obtain the claim.
\end{proof}

\begin{remark}\label{R:opensets}
\begin{itemize}
\item Notice that in the proof of the first claim of \cref{P:main1} we have also shown that, for achronal sets with empty future $V$-boundary, the restriction to open sets 
present in the definition of future Minkowski content \eqref{E:future} is not necessary to obtain a finite quantity,  provided $\mm_{\alpha}^{+}(A\cap \overline{X_\alpha})$ is $\qq$-integrable.
\item It is clear from the proof that, in the first claim in Proposition \ref{P:main1}, instead of $\inf_A \tau_V>0$ (resp.\;$\inf_A -\tau_V>0$) it is sufficient to assume that $$\qq{\rm\text{-}ess}\inf\{\tau_V(X_\alpha\cap A)\colon \alpha\in Q \text{ s.t. } X_\alpha\cap A\neq \emptyset \}>0,
$$
or, respectively, $\qq{\rm\text{-}ess}\inf\{-\tau_V(X_\alpha\cap A)\colon \alpha\in Q \text{ s.t. } X_\alpha\cap A\neq \emptyset \}>0$.
\end{itemize}
\end{remark}

In the next section we will obtain a monotonicity formula for the rescaled area of the spacelike hypersurface $V_{t} := \{\tau_{V} = t\}$.
As not all the integral lines of $\tau_{V}$ will be longer than $t$, it will be enough to consider the following subset of $Q$: 
\begin{align}
Q_{t} &: = \{ \alpha \in Q \colon \sup_{x \in X_{\alpha}} \tau_{V}(x) > t \}, \quad  t>0, \label{E:rayslonger+}\\
Q_{t} &: = \{ \alpha \in Q \colon \inf_{x \in X_{\alpha}} \tau_{V}(x) < t \},   \quad t<0. \label{E:rayslonger-}
\end{align}
Notice that the equidistant set $\{\tau_{V} = t\}$ can be obtained as the translation at (signed) distance $t$ along the $V$-rays $X_{\alpha}$, for $\alpha \in Q_{t}$. 

\begin{proposition}\label{P:identityslice}
Let  $(X,\sfd, \mm, \ll, \leq, \tau)$  and $V \subset X$ satisfy the same assumption of \cref{P:main1}.
Then 
\begin{equation}\label{E:identityslicesV}
\mm^{+}(V_{t}) = \int_{Q_{t}} \mm^{+}_{\alpha}(V_{t}\cap X_{\alpha}) \,\qq(d\alpha),\qquad  
\mm^{-}(V_{t})  = \int_{Q_{t}} \mm^{-}_{\alpha}(V_{t}\cap X_{\alpha}) \,\qq(d\alpha),
\end{equation}
for $t >0$ and $t < 0$, respectively.
\end{proposition}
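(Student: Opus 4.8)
The plan is to follow the structure of the proof of the second claim of \cref{P:main1}, but now with the roles of $V$ and $V_t$ intertwined: $V_t=\{\tau_V=t\}$ is itself an achronal set, and it behaves with respect to the disintegration associated to $V$ in a very transparent way. First I would fix $t>0$ and analyse $\tau_{V_t}^{-1}((0,\ve))$ ray by ray. Since $V$ is achronal and timelike complete, for each ray $X_\alpha$ there is a unique point $x_\alpha$ with $V\cap\overline{X_\alpha}=\{x_\alpha\}$, and by \cref{L:initialpoint} (and the alignment along geodesics recorded after \eqref{E:GammaV}) one has $\tau(x_\alpha,y)=\tau_V(y)$ for all $y\in X_\alpha$. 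Consequently $V_t\cap\overline{X_\alpha}\neq\emptyset$ if and only if $\sup_{x\in X_\alpha}\tau_V(x)>t$, i.e.\ $\alpha\in Q_t$ (the closed case $\sup=t$ contributes a null set of rays and a point set of $\mm_\alpha$-measure zero, so it is harmless; more precisely $V_t\cap\overline{X_\alpha}$ is either empty or a single point since $V_t$ is achronal). For $\alpha\in Q_t$ write $V_t\cap\overline{X_\alpha}=\{y_\alpha\}$, which is exactly the point at $\tau_V$-distance $t$ from $x_\alpha$ along $X_\alpha$.

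The key local identity is that, restricted to a ray, the future $\ve$-enlargement with respect to $\tau_{V_t}$ coincides with the right $\ve$-enlargement of $\{y_\alpha\}$ in the one-dimensional space $(\overline{X_\alpha},|\cdot|,\mm_\alpha)$: namely
\begin{equation*}
\tau_{V_t}^{-1}((0,\ve))\cap\overline{X_\alpha}=\{\,y\in\overline{X_\alpha}\mid 0<\tau(y_\alpha,y)<\ve\,\}=(V_t\cap\overline{X_\alpha})^{\ve}\cap X_\alpha .
\end{equation*}
Indeed the crucial point is that $V_t$ has empty future and past $V$-boundary: any geodesic from $V$ realizing $\tau_V$ up to a point in $I^+(V_t)$ must, by the reverse triangle inequality and the fact that $\tau_V$ is exactly the $\tau$-distance from $x_\alpha$ along the ray, pass through the unique point on the ray at $\tau_V$-level $t$, which is $y_\alpha\in V_t$; hence $\partial_V^\pm V_t=\emptyset$. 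Then I would feed this into \eqref{E:disintegration} and \cref{lem:I+VTV} (noting $\mm(V_t)=0$ since each $\mm_\alpha(V_t\cap\overline{X_\alpha})=0$, as it is the $\mm_\alpha$-measure of a single point on a ray, and $\mm_\alpha$ is absolutely continuous w.r.t.\ $\L^1$) exactly as in \cref{P:main1}:
\begin{equation*}
\frac{\mm(\tau_{V_t}^{-1}((0,\ve))\cap U)}{\ve}=\int_{Q_t}\frac{\mm_\alpha\big((V_t\cap\overline{X_\alpha})^{\ve}\cap U\big)}{\ve}\,\qq(d\alpha)
\end{equation*}
for any open $U\supset V_t$. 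The inequality "$\leq$" in the first identity of \eqref{E:identityslicesV} follows by dominated/Fatou passage to the limit using the density bound \eqref{E:MCP0N1d} — here with initial reference point $x_\alpha$ and the relevant $\tau_V$-values bounded below by (a fraction of) $t>0$, so the constant is uniform — together with the infimum over $U$, while "$\geq$" follows from Fatou's lemma applied to the $\liminf$ exactly as in the second claim of \cref{P:main1}; since $V_t$ has empty $V$-boundary, by the first bullet of \cref{R:opensets} the restriction to open sets $U$ is in fact unnecessary, which is what upgrades the two one-sided bounds to the claimed equality. The case $t<0$ is obtained verbatim by reversing the causal structure, using \eqref{E:rayslonger-}.

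The only genuine subtlety I anticipate is the bookkeeping around the boundary rays and the measurability of $\alpha\mapsto\mm_\alpha^+(V_t\cap\overline{X_\alpha})$: one must check that $Q_t$ is $\qq$-measurable (it is, since $\alpha\mapsto\sup_{X_\alpha}\tau_V$ is measurable by the measurable selection of the quotient map $\QQ$ and continuity of $\tau_V$ on rays), that the rays with $\sup_{X_\alpha}\tau_V=t$ form a $\qq$-null set or at worst contribute nothing, and that on $Q_t$ the point $y_\alpha$ lies in the open interior of $X_\alpha$ (so that $h(\alpha,\cdot)$ is continuous and positive there by \cref{T:disint}(4), making $\mm_\alpha^+(\{y_\alpha\})=h(\alpha,y_\alpha)$ a genuine limit, not just a $\limsup$). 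None of this is deep, but it is where the argument must be written carefully; everything else is a direct transcription of the two halves of the proof of \cref{P:main1} with $A=V_t$ and reference set $V$.
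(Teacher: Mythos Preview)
Your overall architecture is the paper's: verify $\partial_V^+V_t=\emptyset$, get ``$\le$'' from the first claim of \cref{P:main1}, and get ``$\ge$'' by disintegrating $\mm(\tau_{V_t}^{-1}((0,\ve))\cap U)$ along the rays of $\tau_V$ and passing to the limit with Fatou. That is exactly what the paper does.

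However, there is a genuine gap in your justification of the ``key local identity''
\[
\tau_{V_t}^{-1}((0,\ve))\cap\overline{X_\alpha}=\{y\in\overline{X_\alpha}\mid 0<\tau(y_\alpha,y)<\ve\}.
\]
The inclusion $\subset$ is easy (and is all that $\partial_V^+V_t=\emptyset$ buys you, as in the first claim of \cref{P:main1}). The reverse inclusion $\supset$, which is precisely what drives the ``$\ge$'' half of \eqref{E:identityslicesV}, requires showing that for $y\in X_\alpha$ above $y_\alpha$ one has $\tau_{V_t}(y)\le\tau(y_\alpha,y)$, i.e.\ that no other point of $V_t$ off the ray sees $y$ from a larger time-separation. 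Your sentence ``Indeed the crucial point is that $V_t$ has empty future and past $V$-boundary'' does not establish this: the empty $V$-boundary statement concerns $\tau_V$-realizing geodesics from $V$, not $\tau_{V_t}$-realizing geodesics from $V_t$, and these are a priori different families of curves.

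What is actually needed (and what the paper proves as its equation $\tau_{V_t}(w)=\tau_V(w)-t$ for all $w\in I^+(V_t)$) is the following global argument: for any $z\in V_t$ and any $w\in I^+(V_t)$, the reverse triangle inequality \eqref{eq:tauvzxtau} gives $\tau(z,w)\le\tau_V(w)-\tau_V(z)=\tau_V(w)-t$; taking the supremum over $z\in V_t$ yields $\tau_{V_t}(w)\le\tau_V(w)-t$, and the opposite inequality comes from the particular choice $z=y_\alpha$ via \cref{L:initialpoint}. On the ray this reduces to $\tau_{V_t}(y)=\tau(y_\alpha,y)$, which is your identity. You mention ``reverse triangle inequality'' in passing, but use it only to justify $\partial_V^\pm V_t=\emptyset$; you should instead deploy it as above to pin down $\tau_{V_t}$ itself. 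Once that is in place, the rest of your plan goes through verbatim.
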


\begin{proof}
By symmetry, we will only deal with the case $t >0$.

First, we verify that $\partial_V^+ V_{t} = \emptyset$.
If $z\in I^{+}(V_{t})$ then $\tau_{V}(z) > t$. By continuity of $\tau_{V}$, any maximizing geodesic realizing $\tau_{V}$ going from $z$ to $V$ has to meet $V_{t}$.

Next, we claim that
\begin{equation}\label{eq:intQ=intQt}
\int_{Q} \mm^{+}_{\alpha}(V_{t}\cap X_{\alpha}) \,\qq(d\alpha)=\int_{Q_t} \mm^{+}_{\alpha}(V_{t}\cap X_{\alpha}) \,\qq(d\alpha).
\end{equation}
Indeed,  thanks to the hypothesis $t \neq 0$,
the only $V$-rays contributing in the integral in the left hand side of \eqref{eq:intQ=intQt} are those $X_{\alpha}$ for which 
$X_{\alpha} \cap V_{t} \neq \emptyset$. Since the conditional measures $\mm_\alpha$ are absolutely continuous with respect to the Lebesgue measure $\L^1$ on $X_\alpha$ (see \cref{T:disint}), 
it is enough to take the integral in the left hand side  over $Q_{t}$, i.e., on those rays strictly longer than $t$ (otherwise the right Minkowski content on the ray would be 0). This proves \eqref{eq:intQ=intQt}.
Then the inequality $\leq$ in  the first identity of \eqref{E:identityslicesV} follows from \eqref{E:inequality} combined with \eqref{eq:intQ=intQt}.  

\smallskip
We next show the reverse inequality $\geq$ in  the first identity of \eqref{E:identityslicesV}.
First, we claim that $\tau_{V_t}$, the time-separation function from $V_{t}$, satisfies: 
\begin{equation}\label{eq:tauV=tauVt+t}
\tau_{V_t}(w)=\tau_{V}(w)-t, \quad \forall w\in  I^+(V_t).
\end{equation}

Let  $w \in I^{+}(V_{t}) \subset I^{+}(V)$. By \cref{lem:I+VTV} there exist  $z_{t} \in V_{t}$ and $\alpha \in Q$ such that $z_{t} \in X_{\alpha}$ and 
$w \in \overline{X_{\alpha}}$; denoting by $z$ the unique element of the set $\overline{X_{\alpha}}\cap V$, we obtain
$$
\tau_{V_{t}}(w) \geq \tau(z_{t},w) = \tau (z,w) - \tau(z,z_{t}) = \tau (z,w)  - t = \tau_{V}(w) - t.
$$
To show that $\tau_{V_{t}}(w) \leq \tau_{V}(w) - t$ we argue as follows: for each $\ve > 0$ there exists $\zeta_{t} \in V_{t}$ such that 
$\tau_{V_{t}}(w) \leq \tau(\zeta_{t},w) + \ve$. Since $V$ is timelike complete, by \cref{L:initialpoint}   there exists $\zeta \in V$ such that 
$t = \tau_V(\zeta_t) = \tau(\zeta,\zeta_t)$.
The reverse triangle inequality $\tau(\zeta, w) \geq \tau(\zeta, \zeta_{t}) + \tau(\zeta_{t}, w)$ implies that
$$
\tau_{V_{t}}(w) \leq \tau(\zeta_{t},w) + \ve \leq \tau(\zeta,w) - \tau(\zeta, \zeta_{t}) + \ve  =
\tau(\zeta,w) - t + \ve  \leq  \tau_{V}(w) - t + \ve.
$$
Since $\ve>0$ was arbitrary, we conclude that $\tau_{V_{t}}(w)\leq  \tau_{V}(w) - t$. This completes the proof of  \eqref{eq:tauV=tauVt+t}.

From \eqref{eq:tauV=tauVt+t} and the disintegration \eqref{E:disintegration}, it  follows that for every open set $U\supset V_t$ it holds that
\begin{align}
\mm \left(\tau_{V_t}^{-1}((0,\ve))\cap U \right)&= \mm\left(\tau_V^{-1}((t, t+\ve))\cap U \right) \nonumber \\
&=\int_{Q_{t}} \mm_{\alpha} (\tau_{V}^{-1}((t,t+\ve))  \cap X_{\alpha} \cap U) \,\qq(d\alpha). \label{eq:mVtDisint}
\end{align}
Reasoning as in the second part of the proof of Proposition \ref{P:main1} using Fatou's Lemma, we obtain
\begin{equation}\label{eq:limitRHS}
\liminf_{\ve\to 0^+} \frac{1}{\ve} \int_{Q_{t}} \mm_{\alpha} (\tau_{V}^{-1}((t,t+\ve))  \cap X_{\alpha} \cap U ) \,\qq(d\alpha)\geq \int_{Q_{t}} \mm^{+}_{\alpha}(V_{t}\cap X_{\alpha}) \,\qq(d\alpha).
\end{equation}
To obtain the inequality $\geq$ in the first of \eqref{E:identityslicesV}
it is then enough to take the $\limsup$ as $\ve \to 0^+$ in \eqref{eq:mVtDisint} divided by $\ve>0$, use \eqref{eq:limitRHS}   and finally take the infimum over all open sets $U\supset V_t$.
\end{proof}

\subsubsection{The bounded case}\label{SSec:MinkBounded}

The assumption on the geometry of the boundary of $A$, used in \cref{SS:UnboundedA}, can be replaced by two other assumptions:
that the closure of $A$ is acausal (stronger than achronality assumed in \cref{SS:UnboundedA}) and the existence of an open set $U_0$ such that 
\begin{itemize}
\item $\overline A \subset U_0$;
\item $\mm\big(J^-(U_0) \cap J^+(V)\big)<\infty$,  where $V\subset X$ is a Borel achronal timelike 
complete subset (e.g., a Cauchy hypersurface) such that $A\subset I^+(V)$.
\end{itemize}
In Corollary \ref{C:ineqCompactAcausal}, we will show that compactness and acausality of $A$ will suffice, up to a minor technical assumption  (satisfied if the ambient spacetime $X$ has empty null-or-spacelike boundary or, more generally, if at least $A$ does not intersect it).

\begin{proposition}\label{P:main1_bounded}
Let $(X,\sfd, \mm, \ll, \leq, \tau)$ be a timelike nonbranching, globally hyperbolic Lorentzian geodesic space satisfying $\mathsf{TCD}^{e}_{p}(K,N)$ and assume that the causally reversed structure satisfies the same conditions. 

Let $V\subset X$ be a Borel achronal timelike complete subset
and consider the disintegration given by
Theorem \ref{T:disint}. Then the following hold: 
\begin{itemize}
\item If $A \subset I^{+}(V)$ is a Borel set such that  $\overline A$ is acausal, $\inf_{x\in A} \tau_V(x)>0$, 
and there exists an open set $U_0\subset X$ such that 
\footnote{As it will be clear from the proof, in the case $K\geq 0$, the assumption that  $\sup_{x\in U_0} \tau_V(x)<\infty$ can be dropped.}
\begin{equation}\label{eq:HpU0Prop49}
\overline A \subset U_0, \quad \mm(J^-(U_0)\cap J^+(V))<\infty  
\quad \text{and}\quad  \sup_{x\in U_0} \tau_V(x)<\infty, 
\end{equation}
then
\begin{equation}\label{E:inequalitygeneral}
\mm^{+}(A) \leq \int_{Q} \mm_{\alpha}^{+}(\overline A\cap X_{\alpha}) \,\qq(d\alpha), 
\end{equation}
where we adopt the notation \eqref{E:onedright} for $\mm_{\alpha}^{+}(A\cap X_{\alpha})$.

\item
If $A \subset I^{-}(V)$ is a Borel set 
such that $\overline A$ is acausal,
$\sup_{x\in A} \tau_V(x)<0$, and there exists an open set $U_0\subset X$ such that 
\begin{equation}\label{eq:HpU0Prop49Past}
\overline A \subset U_0, \quad \mm(J^+(U_0)\cap J^-(V))<\infty \quad \text{and}\quad  \inf_{x\in U_0} \tau_V(x)>-\infty,
\end{equation}  then \eqref{E:inequalitygeneral} holds replacing $\mm^{+}(A)$ by 
$\mm^{-}(A)$ and $\mm^{+}_{\alpha}$ by $\mm^{-}_{\alpha}$.
\end{itemize}
\end{proposition}

\begin{proof}
We prove the first statement, for $A \subset I^{+}(V)$, the proof of the second statement, for $A \subset I^{-}(V)$, is completely analogous.

{\bf Step 1.} \\
From $A \subset I^+(V)$, it follows that 
$I^+(A) \subset I^+(V)$ 
implying the inclusion 
$\tau_{A}^{-1}((0,\ve)) \subset I^{+}(V)$.
Therefore applying \cref{lem:I+VTV} and 
\cref{T:disint} to $V$, we get
\begin{equation}\label{E:disintproofalter}
\mm(\tau_{A}^{-1}((0,\ve)) \cap U) = \int_{Q} \mm_{\alpha}(\tau_{A}^{-1}((0,\ve)) \cap X_{\alpha} \cap U) \,\qq(d\alpha),
\end{equation}
where $U$ is any open set containing $A$ 
and the $X_\alpha$'s are rays with respect to $\tau_V$.
The previous integral can be restricted to 
$$
Q_A 
: = \{ \alpha \in Q \colon X_\alpha \cap I^+(A) \neq \emptyset \}.
$$
For each $\alpha \in Q_A$, the 
infimum of the set $X_\alpha \cap I^+(A)$ with respect to 
the $\ll$ relation is a single point, 
that we denote by  $z_\alpha$; 
to see this, recall that $X_\alpha$ is a timelike geodesic 
and that the $\ll$ relation restricted 
to $X_\alpha\subset X$ is equivalent to the standard order
on $\R$ resticted to the interval 
parametrizing $X_\alpha$ by $\tau$-arclength. 
It is easily checked that  $z_\alpha \in X_\alpha$ and $z_\alpha \in \overline{I^+(A)}\setminus I^+(A)$, 
yielding that $\tau_A(z_\alpha) = 0$. 

With a slight abuse of notation, for every $\ve\in \big(0, \tau(z_\alpha, \mathfrak{b}_\alpha)\big)$, we denote by $z^\ve_\alpha$ the unique element of $X_\alpha$ such that 
$$
\tau(z_\alpha,z^\ve_\alpha ) = \ve. 
$$
For  $\ve\geq \tau(z_\alpha,\mathfrak{b}_\alpha)$, i.e., when $X_\alpha$ is not long enough, 
 we set $z^\ve_\alpha = \mathfrak{b}_\alpha$, the end-point of the $V$-ray $X_\alpha$.

Since $z^\ve_\alpha \in I^+(A)$, 
for any $x \in A$
$$
\tau(x,z^\ve_\alpha) \geq 
\tau(x, z_\alpha) + \tau(z_\alpha, z^\ve_\alpha) \geq \ve,   
$$
yielding that $\tau_A(z^\ve_\alpha) \geq \ve$. Therefore,  for all 
$\alpha \in Q_A$,
$$
\tau_{A}^{-1}((0,\ve)) \cap X_{\alpha}
\subset \{ z \in X_\alpha \colon  
z_\alpha \ll z \ll z^\ve_\alpha\}. 
$$
For ease of notation, we will denote 
$\{ z \in X_\alpha \colon  
z_\alpha \ll z \ll z^\ve_\alpha\}$ by 
$(z_\alpha, z^\ve_\alpha)$. 
We continue from \eqref{E:disintproofalter},
obtaining   
\begin{align*}
\frac{\mm(\tau_{A}^{-1}((0,\ve)) \cap U)}{\ve} 
&~\leq  \int_{Q_A} \frac{\mm_{\alpha}( (z_\alpha, z^\ve_\alpha) \cap U)}{\ve} \,\qq(d\alpha) \\
&~=
\int_{Q_{A,U}} \frac{\mm_{\alpha}( (z_\alpha, z^\ve_\alpha) \cap U)}{\ve} \,\qq(d\alpha),
\end{align*}
where now 
$Q_{A,U} = 
\{\alpha \in Q_A \colon [z_\alpha, \mathfrak{b}_\alpha] \cap U \neq \emptyset\}$.

To conclude the argument, we wish to use Fatou's Lemma in order to pass to the limit in the right hand side of the previous integral. To this aim, we look for a function  $g \in L^{1}(Q,\qq)$ such that
$$
\frac{\mm_{\alpha} ((z_\alpha, z^\ve_\alpha) \cap U)}{\ve} \leq g(\alpha), \quad \text{for $\qq$-a.e. $\alpha\in Q_{A,U}$}.
$$
By applying \eqref{E:MCP0N1d}, taking as initial point $0$ that can be identified with $X_{\alpha} \cap V$, we obtain for all $\ve\in (0, \ve_0(K,N))$:
\begin{align*}
\frac{\mm_{\alpha} ((z_\alpha, z^\ve_\alpha) \cap U) }{\ve} 
&~ \leq \frac{1}{\ve}\int_{(t_{\alpha},t_{\alpha}+\ve)}  h(\alpha,s)\,ds \\ 
&~  \overset{\eqref{E:MCP0N1d}}{\leq} \frac{h(\alpha,t_{\alpha})}{\ve}\int_{(t_{\alpha},t_{\alpha}+\ve)} \left( \frac{\fs_{K/(N-1)}(s) }{\fs_{K/(N-1)}(t_\alpha)} \right)^{N-1}  \,ds \\
&~ \leq  C(K,N,\inf_A\tau_V)\; h(\alpha,t_{\alpha}),
\end{align*}
where $t_\alpha=\tau_V(z_\alpha)$. 

We will now specialize this construction to $U_0$ and prove the integrability of $\alpha \mapsto h(\alpha,t_\alpha)$ over $Q_{A,U_0}$. 
Notice that so far no regularity assumptions on $A$ nor on $U$ where used.

\medskip
{\bf Step 2.} 
 Thanks to \cref{T:local}, each $h(\alpha, \cdot)$ defines a $\CD(K,N)$ density on its domain. Using standard one-dimensional estimates on $\CD(K,N)$ densities on the real line (see for instance \cite[Lemma A.8]{CMi}), we infer that
 $$
h(\alpha,t_\alpha) \leq \frac{F_{K,N}(t_\alpha)}{t_\alpha} \int_{(0,t_\alpha)} h(\alpha,s) \,ds,
$$
for each $\alpha \in Q_{A,U_0}$, where:
\begin{itemize}
\item if  $K\geq 0$, then 
$
F_{K,N}(\cdot)
\equiv N$, 
\item if  $K\leq 0$, then $F_{K,N}:[0,\infty)\to [0,\infty)$ is continuous and $$\lim_{t\to \infty} F_{K,N}(t)=\infty.$$
\end{itemize}

From the assumption that $\inf_{x\in A} \tau_V(x) = c >0$, we deduce the following: since $z_\alpha \in J^+(A)$, there exists $x \in A$ such that $x \leq z_\alpha$ and another $w \in V$ such that $w \leq x$ and $\tau_V (x) = \tau(w,x)$; then 
$$
t_\alpha = \tau_V(z_\alpha) \geq \tau(w,z_\alpha) \geq 
\tau(w,x) + \tau(x,z_\alpha) \geq \tau(w,x) = \tau_V(x) \geq c.
$$
Hence 
for each $\alpha \in Q_{A,U_0}$
\begin{equation}\label{eq:suphat}
h(\alpha,t_\alpha) \leq \frac{F_{K,N}(t_\alpha)}{c} \int_{(0,t_\alpha)} h(\alpha,s) \,ds.
\end{equation}
Then, since 
$$
\bigcup_{\alpha \in Q_{A,U_0}} \big\{ x \in X_\alpha \colon 0 < \tau_V(x) < t_\alpha \big\} \subset I^{-}(U_0) \cap J^+(V), 
$$
the finiteness of $\mm(I^-(U_0) \cap J^+(V))$ ensures that
$$
\int_{Q_{A,U_0}}  \left(  \int_{(0, t_{\alpha})} h(\alpha, s) \, ds \right) \qq(d\alpha) \leq \mm(I^-(U_0) \cap J^+(V))<\infty,  
$$
yielding that the function 
\begin{equation}\label{eq:IntegrableOverQAUU}
Q_{A,U_0}\ni \alpha \mapsto \int_{(0, t_{\alpha})} h(\alpha, s) \, ds \quad \text{is $\qq$-integrable}.
\end{equation}
Recalling that, by assumption,  $\sup_{x\in U_0} \tau_V(x)<\infty$, we also infer that
\begin{equation}\label{eq:FKNtal<inf}
\sup_{\alpha \in Q_{A,U_0}} F_{K,N}(t_\alpha)<\infty.
\end{equation}
Combining \eqref{eq:suphat}, \eqref{eq:IntegrableOverQAUU} and \eqref{eq:FKNtal<inf}, we obtain
that 
\begin{equation}\label{eq:SupIntegrableOverQAUU}
Q_{A,U_0}\ni \alpha \mapsto \sup_{t\in (0, t_{\alpha})} h(\alpha, t)  \quad \text{is $\qq$-integrable}.
\end{equation}
Hence, for each open set $U$ such that   
$A\subset U \subset U_0$,
we can repeat the same argument (thanks to 
the fact that also $I^-(U) \cap J^+(V)$ 
has finite $\mm$-measure) and obtain:
$$
\mm^{+}(A) \leq \int_{Q_{A,U}} \mm_{\alpha}^{+}(\{z_\alpha\})\,\qq(d\alpha),
$$
where
$$
Q_{A,U} : = \{\alpha \in Q_A \colon [z_\alpha, \mathfrak{b}_\alpha] \cap U \neq \emptyset \}.
$$

\smallskip
{\bf Step 3.} 
Consider now a decreasing sequence of open sets 
$U_n$ such that $\bigcap_n U_n = \overline{A}$ and $\overline U_{n+1} \subset U_n$ (so that also $\bigcap_n \overline U_n = \overline{A}$).
Taking the limit as $n \to \infty$, gives 
\begin{equation}\label{E:almosthesis}
\mm^{+}(A) \leq 
\int_{\cap_n Q_{A,U_n}} \mm_{\alpha}^{+}(\{z_\alpha\})\,\qq(d\alpha). 
\end{equation}
It remains to prove that:
$$
\cap_n Q_{A,U_n} = 
\{\alpha \in Q_A \colon [z_\alpha, \mathfrak{b}_\alpha] \cap U_n \neq \emptyset , \text{for all } n \in \N\} \subset 
\{\alpha \in Q_A \colon z_\alpha \in \overline A \}.
$$
Let $\alpha \in \cap_n Q_{A,U_n}$. For each $n\in \N$
$$
\emptyset \neq ([z_\alpha,\mathfrak{b}_\alpha]
\cap U_n )\subset 
([z_\alpha,\mathfrak{b}_\alpha]
\cap \overline{U_n} ).
$$
Since $[z_\alpha,\mathfrak{b}_\alpha]
\cap \overline{U_n}$ forms a decreasing sequence of closed sets and the space is complete, we infer that
$$
\emptyset \neq \bigcap_{n\in\N} [z_\alpha,\mathfrak{b}_\alpha]
\cap \overline{U_n}
= [z_\alpha,\mathfrak{b}_\alpha]
\bigcap_{n\in\N} 
\overline{U_n} = 
[z_\alpha,\mathfrak{b}_\alpha] \cap \overline A.
$$
Finally, if  $w_\alpha \in ([z_\alpha,\mathfrak{b}_\alpha] \cap \overline A)$ 
and $w_\alpha \neq z_\alpha$,  we reach a contradiction:
since $z_\alpha\ \in J^+(A)$, 
there will exist $x_\alpha \in A$
such that $x_\alpha \leq z_\alpha \ll w_\alpha$ implying a violation of the acausality of $\overline A$. 
Hence $\cap_n Q_{A,U_n}\subset  \{\alpha \in Q \colon z_\alpha \in \overline A \}$ and 
from \eqref{E:almosthesis} we obtain 
$$
\mm^+(A) \leq 
\int_{\{\alpha \in Q_A \colon z_\alpha \in \overline A\}}
\mm_{\alpha}^+(\{z_\alpha \})\,\qq(d\alpha).
$$
By acausality of $A$, if $z_\alpha \in \overline A$ then
$\{ z_\alpha \} = \overline A \cap X_\alpha$. 
Therefore 
$$
\{\alpha \in Q_A \colon z_\alpha \in \overline A\} \subset 
\{\alpha \in Q_A \colon 
\overline A \cap X_\alpha \neq \emptyset\}.
$$
Combining the considerations above, we conclude that
$$
\mm^+(A) \leq 
\int_{Q}
\mm_{\alpha}^+(\overline A \cap X_\alpha)\,\qq(d\alpha).
$$
\end{proof}

If $A$ is assumed to be compact, the statement of \Cref{P:main1_bounded} becomes  neater. 

\begin{corollary}\label{C:ineqCompactAcausal}
Let $(X,\sfd, \mm, \ll, \leq, \tau)$ be as in \cref{P:main1_bounded}. Let $V\subset X$ be Borel, achronal and timelike complete subset. 
Let $A\subset I^+(V)$ be any acausal compact set such that 
$A \subset I^-(B)$, for some subset $B\subset X$.
Then 
\begin{equation}\label{E:inequalitygeneralCor}
\mm^{+}(A) \leq \int_{Q} \mm_{\alpha}^{+}( A\cap {X_{\alpha}}) \,\qq(d\alpha), 
\end{equation}
where the disintegration in the integral is given by $\tau_V$ and  Theorem \ref{T:disint}.
\end{corollary}

\begin{proof}
Since $\tau_{V}$ is lower semi-continuous on $I^{+}(V)$ and $A\subset I^+(V)$ is compact, we infer that 
$$
\inf_{x\in A} \tau_V (x) = \min_{x\in A} \tau_V(x)>0.
$$
It remains to prove the existence of an open set $U_0 \supset A$
verifying the hypothesis of \Cref{P:main1_bounded}. 
For each $x \in B$, the set $I^-(x)$ is open and 
by assumption $A\subset \cup_{x\in B} I^-(x)$. 
Therefore by compactness of $A$, 
there exists a finite collection $x_1, \dots, x_n \in B$ 
such that 
$$
A \subset \bigcup_{k =1}^n I^-(x_k).
$$
Since $V \cap I^-(x) \subset V \cap J^-(x)$, the timelike completeness of $V$ implies that the set  $V \cap I^-(x)$
has compact closure in $V$. 
Hence, by global hyperbolicity of $X$, also the set $J^+(V) \cap I^-(x)$ has compact closure. 
In particular, 
$$
\mm(J^+(V) \cap I^-(x_k))<\infty,\quad  \text{for all } k = 1,\dots n.
$$
Moreover, since $\tau$ satisfies the reverse triangle inequality, it follows that 
$$
\sup_{x\in \bigcup_{k =1}^n I^-(x_k)} \tau_V(x)\leq \max_{k=1,\ldots, n} \tau_V(x_k)<\infty.
$$ 
Then, $U_0:=\bigcup_{k =1}^n I^-(x_k)$ fulfills all the assumptions of \Cref{P:main1_bounded}, completing the proof. 
\end{proof}

\begin{remark}[On the assumption $A \subset I^-(B)$]\label{rem:AI-B}
The assumption that there exists a subset   subset $B\subset X$ such that $A \subset I^-(B)$ is clearly satisfied if $X$ is a smooth Lorentzian manifold without boundary. In case $X$ is a smooth Lorentzian manifold with boundary $\partial X$, it suffices that
$$
\{x\in \partial X\colon T_x \partial X \text{ is either null of spacelike}\}\cap A =\emptyset.
$$
In physical term,  this amounts to require that $A$ has empty intersection with the set of spacelike or null singularities (assuming that $X$ is inextendible).
\end{remark}

\section{Lorentzian isoperimetric inequality  and the monotonicity of the area}

Combining the results from the previous sections,  we now derive two geometric inequalities.
We begin discussing the monotonicity of the area.

\subsection{A sharp monotonicity formula for the area of $\tau_V$-level sets}

We prove the following monotonicity formula for the area of the level sets of $\tau_V$ that will be a direct consequence of \Cref{P:identityslice}.

\begin{theorem}[Monotonicity formula for the area]\label{T:monotonicityVolume}
Let  $(X,\sfd, \mm, \ll, \leq, \tau)$ be 
a timelike non-branching,  globally hyperbolic, Lorentzian geodesic space satisfying $\mathsf{TCD}^{e}_{p}(K,N)$ and assume that the causally-reversed structure satisfies the same conditions. 
Let $V\subset X$ be a Borel achronal timelike complete subset.

Denote by $V_{t}$ the achronal slice at $\tau_V$-distance $t$ from $V$, i.e., $V_{t}: = \{\tau_{V} = t\}$. 
Then 
\begin{align*}
&(0,\infty) \ni t \longmapsto \frac{\mm^{+}(V_{t})}{(\fs_{K/(N-1)}(t))^{N-1}} \quad \text{ is monotonically non-increasing, and }\\
&(-\infty,0) \ni t \longmapsto \frac{\mm^{-}(V_{t})}{(\fs_{K/(N-1)}(-t))^{N-1}} \quad \text{ is monotonically non-decreasing}.
\end{align*}
\end{theorem}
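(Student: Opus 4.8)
The plan is to reduce the statement to the one-dimensional monotonicity carried by the conditional measures $\mm_\alpha$, using the localization results already established. First I would apply \cref{P:identityslice} to rewrite, for every $t>0$,
$$
\mm^{+}(V_{t}) = \int_{Q_{t}} \mm^{+}_{\alpha}(V_{t}\cap \overline{X_{\alpha}}) \,\qq(d\alpha),
$$
and similarly for $t<0$ with $\mm^{-}$ and $Q_t$ as in \eqref{E:rayslonger-}. The key observation is that, for $\qq$-a.e.\;$\alpha$, the ray $X_\alpha$ is isometric to a real interval via the parametrization by $\tau_V$ (recall \cref{lem:XalphaI} and the identification of the $0$-level with $X_\alpha\cap V$), the conditional measure is $\mm_\alpha = h(\alpha,\cdot)\,\L^1\llcorner_{X_\alpha}$ with $h(\alpha,\cdot)$ continuous, and by \cref{T:local} the one-dimensional space $(X_\alpha,|\cdot|,\mm_\alpha)$ satisfies $\CD(K,N)$, hence in particular $\MCP(K,N)$ and the density bound \eqref{E:MCP0N1d}. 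Because $h(\alpha,\cdot)$ is continuous, the one-dimensional future Minkowski content at the point $x_\alpha^t := X_\alpha\cap V_t$ is simply the value of the density there: $\mm^{+}_{\alpha}(V_{t}\cap \overline{X_{\alpha}}) = h(\alpha, t)$ (identifying $X_\alpha$ with an interval containing $0$ and $t$), and analogously $\mm^{-}_{\alpha}(V_{t}\cap \overline{X_{\alpha}}) = h(\alpha,t)$ for $t<0$.

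Next I would extract the pointwise monotonicity of $t\mapsto h(\alpha,t)/(\fs_{K/(N-1)}(t))^{N-1}$ from \eqref{E:MCP0N1d}. Taking the initial point $a=0$ (the point $X_\alpha\cap V$, at $\tau_V=0$) in the right inequality of \eqref{E:MCP0N1d}, for $0<t_0<t_1$ in the interior of $X_\alpha$ one gets
$$
\frac{h(\alpha,t_1)}{h(\alpha,t_0)} \leq \left(\frac{\fs_{K/(N-1)}(t_1)}{\fs_{K/(N-1)}(t_0)}\right)^{N-1},
$$
which rearranges exactly to $h(\alpha,t_1)/(\fs_{K/(N-1)}(t_1))^{N-1} \leq h(\alpha,t_0)/(\fs_{K/(N-1)}(t_0))^{N-1}$, i.e.\;the desired non-increasing behavior on each ray for $t>0$; for $t<0$ one uses instead the left inequality of \eqref{E:MCP0N1d} with final point $b=0$, which gives the reversed monotonicity in terms of $\fs_{K/(N-1)}(-t)$. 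A technical point is that \eqref{E:MCP0N1d} as stated requires $b-a\leq \pi\sqrt{(N-1)/(K\vee0)}$; for $K\leq 0$ there is no constraint, and for $K>0$ one restricts to $t_0,t_1$ within this range, which suffices since one can always chain the inequality over overlapping sub-intervals (or simply note the conclusion is what has to be proved only up to the first conjugate point, beyond which $\fs_{K/(N-1)}$ vanishes and the statement is vacuous/automatic).

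Finally I would integrate: for $0<s<t$, the set $Q_t$ of rays longer than $t$ is contained in $Q_s$, and on each $\alpha\in Q_t$ the pointwise inequality gives $h(\alpha,t)/(\fs_{K/(N-1)}(t))^{N-1} \leq h(\alpha,s)/(\fs_{K/(N-1)}(s))^{N-1}$. Hence
$$
\frac{\mm^{+}(V_{t})}{(\fs_{K/(N-1)}(t))^{N-1}} = \int_{Q_{t}} \frac{h(\alpha,t)}{(\fs_{K/(N-1)}(t))^{N-1}}\,\qq(d\alpha) \leq \int_{Q_{t}} \frac{h(\alpha,s)}{(\fs_{K/(N-1)}(s))^{N-1}}\,\qq(d\alpha) \leq \int_{Q_{s}} \frac{h(\alpha,s)}{(\fs_{K/(N-1)}(s))^{N-1}}\,\qq(d\alpha) = \frac{\mm^{+}(V_{s})}{(\fs_{K/(N-1)}(s))^{N-1}},
$$
using $Q_t\subset Q_s$ and non-negativity of the integrand in the second inequality. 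The argument for $t<0$ is identical after reversing the causal structure. The main obstacle I anticipate is the bookkeeping needed to justify identifying $\mm^\pm_\alpha(V_t\cap\overline{X_\alpha})$ with $h(\alpha,t)$ cleanly (handling rays where $t$ is an endpoint, and the $\qq$-measurability of $\alpha\mapsto h(\alpha,t)$), together with the conjugate-point caveat in \eqref{E:MCP0N1d} when $K>0$; both are routine given \cref{T:disint}, \cref{T:local} and \cref{P:identityslice}, but must be handled with care.
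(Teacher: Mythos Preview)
Your proposal is correct and follows essentially the same route as the paper: apply \cref{P:identityslice} to express $\mm^{\pm}(V_t)$ as $\int_{Q_t} h(\alpha,t)\,\qq(d\alpha)$, use the density bound \eqref{E:MCP0N1d} with endpoint at $V$ to obtain the pointwise monotonicity of $h(\alpha,t)/(\fs_{K/(N-1)}(t))^{N-1}$ along each ray, and then integrate using the nesting $Q_T\subset Q_t$ for $0<t<T$. The paper's proof is slightly terser on the technical caveats you flag (endpoints, the $K>0$ range constraint), but the argument is the same.
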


In the case $K=0$, i.e., non-negative timelike Ricci (aka Hawking-Penrose strong energy condition), the monotonicity formula takes the following neat expression:
\begin{align*}
&(0,\infty) \ni t \longmapsto \frac{\mm^{+}(V_{t})}{t^{N-1}} \quad \text{ is monotonically non-increasing, and }\\
&(-\infty,0) \ni t \longmapsto \frac{\mm^{-}(V_{t})}{(-t)^{N-1}} \quad \text{ is monotonically non-decreasing.}
\end{align*}

\begin{proof}

From Proposition \ref{P:identityslice} by the continuity of the densities $h(\alpha, \cdot)$ (see (4) in \cref{T:disint}) we deduce that:
$$
\mm^{+}(V_{t}) =\int_{Q_{t}} h(\alpha,t) \,\qq(d\alpha). 
$$
By the second inequality of \eqref{E:MCP0N1d} we have that
$$
h(\alpha,t) \geq \left( \frac{\fs_{K/(N-1)}(t) }{\fs_{K/(N-1)}(T)} \right)^{N-1} h(\alpha,T), \quad \forall \alpha \in Q_{T}, \; \forall T>t>0,
$$
and thus, for all  $T>t>0$:
\begin{align*}
\frac{\mm^{+}(V_{T})}{(\fs_{K/(N-1)}(T))^{N-1}} &=\int_{Q_{T}} \frac{h(\alpha,T)}{(\fs_{K/(N-1)}(T))^{N-1}} \,\qq(d\alpha)\\ 
&\leq  \int_{Q_{T}} \frac{h(\alpha,t)}{(\fs_{K/(N-1)}(t))^{N-1}} \,\qq(d\alpha).
\end{align*}
Since by the very definition, for $T>t>0$, it holds that  $Q_{t} \supset Q_{T}$, we infer that,  for all  $T>t>0$:
$$
\frac{\mm^{+}(V_{T})}{(\fs_{K/(N-1)}(T))^{N-1}} \leq\int_{Q_{t}} \frac{h(\alpha,t)}{(\fs_{K/(N-1)}(t))^{N-1}} \,\qq(d\alpha) =\frac{\mm^{+}(V_{t})}{(\fs_{K/(N-1)}(t))^{N-1}}.
$$
In the symmetric situation of $0> t > T$, we can use the first inequality of \eqref{E:MCP0N1d} to obtain
$$
h(\alpha,-t) \geq \left( \frac{\fs_{K/(N-1)}(-t) }{\fs_{K/(N-1)}(-T)} \right)^{N-1} h(\alpha,-T)
$$
and thus
\begin{align*}
\frac{\mm^{-}(V_{T})}{(\fs_{K/(N-1)}(-T))^{N-1}} &=\int_{Q_{T}} \frac{h(\alpha,T)}{(\fs_{K/(N-1)}(-T))^{N-1}} \,\qq(d\alpha) \nonumber\\
&\leq\int_{Q_{t}} \frac{h(\alpha,t)}{(\fs_{K/(N-1)}(-t))^{N-1}} \,\qq(d\alpha) 
=\frac{\mm^{-}(V_{t})}{(\fs_{K/(N-1)}(-t))^{N-1}}.
\end{align*}
\end{proof}

\begin{remark}[Sharpness of the monotonicity formula in \cref{T:monotonicityVolume}]\label{Rem:SharpMononot}
The area mononoticity in \cref{T:monotonicityVolume} is sharp, as equality is achieved in  conical regions in the model spaces.

More precisely, for $K=0$ and $N\in \mathbb{N}\geq 2$, consider the $N$-dimensional Minkowski space with coordinates $(x_1,\ldots, x_N)$ and Lorentzian metric $dx_1^2+\ldots+dx_{N-1}^2-dx_N^2$. Let $X$ be the conical region 
$$
X:=\{0\}\cup\{(x_1,\ldots, x_N)\colon x_N^2\geq a\,(x_1^2+\ldots+x_{N-1}^2)\}, \quad \text{for some } a>1.
$$
Note that $X$, endowed with the standard metric and Lorentzian structure, is a timelike non-branching Lorentzian geodesic space satisfying $\TCD^e_p(0,N)$. Observe that $X\setminus\{0\}$ is a subset of the open cone of timelike vectors and that $V=\{0\}$ is a Borel achronal timelike complete subset of $X$. A direct computation (see the proof of \cref{prop:SharpnessIsop}) shows that there exists $c=c(a,N)$ such that 
$$
\mm^+(V_t)=c t^{N-1}, \text{ for all } t>0.
$$
In particular, equality is achieved in the monotonocity formula. 
For $K>0$ (resp. $K<0$), one can construct an analogous example replacing the $N$-dimensional Minkowski space by the $N$-dimensional de Sitter space of constant sectional curvature $K/(N-1)$ (resp. the $N$-dimensional anti-de Sitter space of of constant sectional curvature $K/(N-1)$).
\end{remark}

\subsection{A sharp and rigid isoperimetric-type inequality}
We next deduce from \cref{T:local}, \cref{P:main1_bounded} and \cref{C:ineqCompactAcausal} an 
isoperimetric type inequality.

For $V\subset X$, Borel achronal  timelike complete subset, and $S \subset I^{+}(V)$ Borel achronal set we will consider the conically shaped region $C(V,S)$
spanned by the set of $\tau_V$-maximizing geodesics from $V$ to $S$, i.e.: 
\begin{equation}\label{E:Cone}
C(V,S) : = \{ \gamma_{t} \colon \gamma \in \Geo(X), \ t \in [0,1], \ \gamma_{0} \in V, \gamma_{1} \in S, \ {\rm L}_{\tau}(\gamma) = \tau_{V}(\gamma_{1}) \}. 
\end{equation}
Set
\begin{equation}\label{eq:defdist}
\dist(V,S) : = \inf \{ \tau_{V}(x) \colon x \in S \}.
\end{equation}
Notice that, by its very definition, $\dist(V,S)$ is a $\min-\max$ object. 
It shall interpreted as a kind of ``time-distance" between $V$ and $S$.
Define also 
\begin{equation}
{\mathfrak D}_{K,N}(t):= \frac{1}{\fs_{K/(N-1)} (t)^{N-1}} \int_{0}^{t}  \fs_{K/(N-1)} (s)^{N-1} \, ds, \quad t\in (0, T_{K,N})
\end{equation}
where $\fs_{K/(N-1)} (t)$ was defined in \eqref{eq:deffsfc} and 
$$T_{K,N}:=\sup\{t>0: \, \fs_{K/(N-1)} (t) >0\}\in (0,\infty].$$ Note that, for $K=0$, one obtains simply $${\mathfrak D}_{0,N}(t)= \frac{t}{N}.
$$
The function ${\mathfrak D}_{K,N}$ admits an equivalent expression: since 
$$
\sigma_{K/(N-1)}^{(s/t)}(t)= \frac{\fs_{K/(N-1)} (s)}{\fs_{K/(N-1)} (t)},
$$
it follows by a change of variable that
$$
{\mathfrak D}_{K,N}(t)
= \int_0^t
\sigma_{K/(N-1)}^{(s/t)}(t)^{N-1} \,ds
= t \int_0^1 
\sigma_{K/(N-1)}^{(r)}(t)^{N-1} \,dr.
$$
\begin{theorem}[Isoperimetric-type inequality]\label{T:isop1} 
Let  $(X,\sfd, \mm, \ll, \leq, \tau)$ be 
a timelike non-branching,  globally hyperbolic, Lorentzian geodesic space satisfying $\mathsf{TCD}^{e}_{p}(K,N)$, and assume that the causally-reversed structure satisfies the same conditions.

Let $V\subset X$ be a Borel achronal  timelike complete 
subset and $S \subset I^{+}(V)$ be a compact and acausal  set such that $S \subset I^{-}(B)$ for some $B\subset X$. 
Then
\begin{equation}\label{E:isoperKN}
\mm^{+}(S)\, {\mathfrak D}_{K,N}(\dist(V,S)) \leq  \mm(C(V,S)),
\end{equation}
If $K=0$,  the bound \eqref{E:isoperKN} reads as
\begin{equation}\label{E:isoper}
\mm^{+}(S)\, \dist(V,S) \leq N \mm(C(V,S)).
\end{equation}
\end{theorem}

\begin{proof}
Consider the disintegration formula associated to $\tau_{V}$. Since $S \subset I^{+}(V)$, then $C(V,S) \subset I^{+}(V)$  and therefore
$$
\mm(C(V,S)) = \int_{Q} \mm_{\alpha} (X_{\alpha} \cap C(V,S)) \,\qq(d\alpha). 
$$
Since both $V$ and $S$ are achronal,   $X_{\alpha} \cap C(V,S)$ can be identified via $\tau_{V}$  to a real interval $[0,b_{\alpha}]$ for some $b_{\alpha} > 0$. 
Then 
\begin{equation}\label{eq:VolCVS}
\mm(C(V,S)) = \int_{Q} \int_{[0,b_{\alpha}]} h(\alpha,s)\,ds \,\qq(d\alpha).
\end{equation}
By \eqref{E:MCP0N1d}, we have that  for $\qq$-a.e.\;$\alpha \in Q$ it holds that $b_\alpha\in (0,T_{K,N})$; moreover, if $0< s < b_{\alpha}$, then
\begin{equation}\label{eq:LBhalpha}
h(\alpha,s) \geq h(\alpha,b_{\alpha}) \, \frac{\fs_{K/(N-1)} (s)^{N-1}}{ \fs_{K/(N-1)} (b_\alpha)^{N-1}}.
\end{equation}
Hence,
$$
\int_{0}^{b_{\alpha}} h(\alpha,s)\,ds \geq  \frac{h(\alpha,b_{\alpha})}{\fs_{K/(N-1)} (b_\alpha)^{N-1}} \int_{0}^{b_{\alpha}} \fs_{K/(N-1)} (s)^{N-1} ds = h(\alpha,b_{\alpha})\, {\mathfrak D}_{K,N}(b_\alpha).
$$
Notice that the function $(0, T_{K,N})\ni t\mapsto {\mathfrak D}_{K,N}(t)$ is increasing. Moreover, if we denote by $\{ z_{\alpha}\} = X_{\alpha}\cap S$, then $b_{\alpha} = \tau_{V}(z_{\alpha})$, yielding $b_{\alpha} \geq \dist(V,S)$.
We infer that 
\begin{equation}\label{eq:LBhalphaDist}
\int_{0}^{b_{\alpha}} h(\alpha,s)\,ds  \geq h(\alpha,b_{\alpha}) \, {\mathfrak D}_{K,N}(\dist(V,S)), \quad \qq\text{-a.e. }\alpha \in Q.
\end{equation}
The combination of \eqref{eq:VolCVS} and \eqref{eq:LBhalphaDist} gives
\begin{align*}
\mm(C(V,S)) &\geq {\mathfrak D}_{K,N}(\dist(V,S))\, \int_{Q} h(\alpha,b_{\alpha}) \, \qq(d\alpha) \\
&=  {\mathfrak D}_{K,N}(\dist(V,S))\,  \int_{Q} \mm_{\alpha}^{+}(S\cap X_{\alpha}) \, \qq(d\alpha),
\end{align*}
which, together with \cref{C:ineqCompactAcausal}, concludes the proof of the inequality.
\end{proof}

\begin{remark}\label{R:otherinequalities}
\cref{T:isop1} is valid also for other classes of sets $S$, in particular those for which the inequality of \cref{C:ineqCompactAcausal} holds true. 
In particular, if (see \cref{P:main1_bounded})
\begin{itemize}
\item $S \subset I^{+}(V)$ is a closed, acuasal set such that $\inf_{x \in S} \tau_{V}(x) > 0$  and there exists an open set $U_{0}$ such that 
$\sup_{v\in U_0} \tau_V(x)<\infty$ 
(in case $K\geq 0$, the finiteness of the $\sup$ is not needed),
$S \subset U_0$ and $\mm(J^-(U_0)\cap J^+(V))<\infty$;
\end{itemize}
or  (see \cref{P:main1})
\begin{itemize}
\item $S$ is Borel, achronal  with $\partial_V^+S = \emptyset$  and $\inf_{x\in S} \tau_V(x)>0$; 
\end{itemize} 
Then 
$$\mm^{+}(S)\, {\mathfrak D}_{K,N}(\dist(V,S)) \leq  \mm(C(V,S)).$$
\end{remark}

In the next corollary, we specialize  \cref{T:isop1} to the case of a smooth Lorentzian manifold (recall also \cref{rem:AI-B}).

\begin{corollary}\label{cor:IsopSmooth}
Let $(M^{n+1},g)$ be a smooth globally hyperbolic Lorentzian manifold. Assume there exists $K\in \R$ such that  $\Ric_g(v,v)\geq -K g(v,v)$ for all timelike tangent vectors.

Let $V\subset X$ be a Borel achronal  timelike complete 
subset and $S \subset I^{+}(V)$ be a compact and acausal  smooth hypersurface.

In case $M$ has non-empty boundary $\partial M$, assume that
$$
\{x\in \partial M\colon T_x \partial M \text{ is either null of spacelike}\}\cap S =\emptyset.
$$

 Then
\begin{equation}\label{E:isoperKNCor}
\vol_g^n(S)\, {\mathfrak D}_{K,n+1}(\dist(V,S)) \leq  \vol_g^{n+1}(C(V,S)),
\end{equation}
where $\vol_g^{n+1}$ (resp.\;$\vol_g^n$) denotes the $(n+1)$-dimensional Lebesgue measure of $g$ (resp.\;the  $n$-dimensional Lebesgue measure of the restriction of $g$ to $S$).  

If $K=0$ (i.e., if the strong energy condition holds),  the bound \eqref{E:isoperKNCor} reads as
\begin{equation}\label{E:isoperCor}
\vol_g^n(S)\, \dist(V,S) \leq (n+1)\; \vol_g^{n+1}(C(V,S)).
\end{equation}
\end{corollary}

\begin{remark}[Related literature in Riemannian signature]
At a formal level, the proof of Theorem \ref{T:isop1} is performed following the integral lines of the gradient flow  of $\tau_V$, the Lorentzian distance from $V$. This should be compared with the celebrated Heintze-Karcher inequality in the Riemannian setting \cite{HeintzeKarcher}, where one obtains a volume bound of a smooth Riemannian manifold $M$ in terms of the co-dimensional one volume of a smooth hypersurface $V$, the maximal value of the mean curvature of $V$ and the maximal distance from $V$ in $M$. The proof of the Heintze-Karcher inequality is also performed following the integral lines of the gradient flow of the distance from $V$, however the volume bound on each integral line depends on the mean curvature of $V$. The main advantage of the proof of  Theorem \ref{T:isop1} is that, in addition to considerably relaxing the regularity assumed on the space, it does not assume any bound on the mean curvature of $V$.

Instead, the statement of Theorem \ref{T:isop1} is more in the spirit of the isoperimetric-isodiametric inequalities studied by the second named author and Spadaro \cite{MondinoSpadaro} in Riemannian signature, with different techniques.
\end{remark}

We now show that \cref{T:isop1} is sharp.

\begin{proposition}[Sharpness of \cref{T:isop1}]\label{prop:SharpnessIsop}
The inequality \eqref{E:isoper} is sharp, in the following sense. For $N\in \mathbb{N}$, $N\geq 2$:
\begin{itemize}
\item for $K=0$, the equality in \eqref{E:isoper} is achieved for a conical region in $N$-dimensional Minkowski spacetime.
\item for $K>0$, the equality  in \eqref{E:isoperKN} is achieved for a conical region in $N$-dimensional de Sitter spacetime with constant sectional curvature $K/(N-1)$;
\item for $K<0$, the equality  in \eqref{E:isoperKN} is achieved for a conical region in $N$-dimensional anti-de Sitter spacetime with constant sectional curvature $K/(N-1)$;
\end{itemize} 
\end{proposition}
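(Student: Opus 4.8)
The plan is to exhibit, for each sign of $K$, an explicit extremal configuration and to verify the equality by computing the three quantities entering \eqref{E:isoperKN} directly (equivalently, by running the proof of \cref{T:isop1} in the case where each of the two inequalities used there is saturated). In all cases the example is a geodesically convex causal cone with a vertex $o$: for $K=0$ one takes $X:=\{o\}\cup\{x_N^{2}\geq a(x_1^{2}+\cdots+x_{N-1}^{2})\}$ with $a>1$ inside the $N$-dimensional Minkowski space-time (as in \cref{Rem:SharpMononot}), and for $K\neq 0$ the analogous convex conical neighbourhood of a point $o$ in the $N$-dimensional model space-time of the statement (de Sitter for $K>0$, anti-de Sitter for $K<0$, of constant sectional curvature $K/(N-1)$). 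One sets $V:=\{o\}$ --- which is closed, hence Borel, achronal (a singleton), and timelike complete since $J^{\pm}(x)\cap\{o\}$ is always empty or the point $o$ --- and takes $S:=V_{T}=\{\tau_{V}=T\}$, the $\tau_V$-level set at a radius $0<T<T_{K,N}$.

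First I would verify that $X$ satisfies the hypotheses of \cref{T:isop1}. Being a geodesically convex subset of a smooth, globally hyperbolic model space-time, $X$ has the property that any $\tau$-maximizing causal curve between two of its points (a space-time geodesic) stays inside $X$; hence $(X,\sfd,\vol_{g}\llcorner_{X},\ll,\leq,\tau)$ is again a globally hyperbolic Lorentzian geodesic space, and it is timelike non-branching because the ambient one is. The $\TCD^{e}_{p}(K,N)$ condition and its time-reversal pass to $X$: for a compactly supported, timelike $p$-dualisable pair with supports in $X$ the ambient $\ell_{p}$-geodesic stays in $X$ by convexity, and the relative entropy with respect to $\vol_{g}\llcorner_{X}$ coincides with the one with respect to $\vol_{g}$, so the convexity inequality \eqref{eq:conveKN} is literally the one valid in the ambient model (the smooth model space-times satisfy $\TCD^{e}_{p}(K,N)$ by \cite{McCann,MoSu,CaMo:20}).

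Next I would compute the three quantities. The $\tau_{V}$-transport rays out of $V=\{o\}$ are exactly the future timelike geodesics issuing from $o$, and in the constant-curvature model the volume element in geodesic polar coordinates about $o$ along timelike directions has the form $\fs_{K/(N-1)}(t)^{N-1}\,dt\,d\sigma$ (for $K=0$ just $t^{N-1}\,dt\,d\sigma$), where $d\sigma$ is the natural measure on the truncated set of unit timelike directions at $o$; this is the standard polar-coordinate computation, the radial Jacobi equation along timelike geodesics in the model producing precisely the density $\fs_{K/(N-1)}(t)^{N-1}$ characteristic of the one-dimensional $\CD(K,N)$ model. Hence the disintegration of \cref{T:disint} has conditional densities $h(\alpha,t)=c\,\fs_{K/(N-1)}(t)^{N-1}$ with $c>0$ independent of $\alpha$, so \eqref{E:MCP0N1d} --- equivalently \eqref{eq:LBhalpha} --- holds with equality; and $\tau_{V}\equiv T$ on $S=V_{T}$, with every ray of $X$ reaching the level $\tau_{V}=T$ (always for Minkowski, and for $X$ chosen appropriately and $T<T_{K,N}$ in the curved cases), so $b_{\alpha}=T=\dist(V,S)$ for $\qq$-a.e.\ $\alpha$ in the notation of the proof of \cref{T:isop1}. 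Therefore
\[
\mm(C(V,S))=\int_{Q}\int_{0}^{T}h(\alpha,s)\,ds\,\qq(d\alpha)=c\int_{0}^{T}\fs_{K/(N-1)}(s)^{N-1}\,ds=c\,\fs_{K/(N-1)}(T)^{N-1}\,{\mathfrak D}_{K,N}(T),
\]
while \cref{P:identityslice} applied to $S=V_{T}$ gives $\mm^{+}(S)=\mm^{+}(V_{T})=\int_{Q}h(\alpha,T)\,\qq(d\alpha)=c\,\fs_{K/(N-1)}(T)^{N-1}$ (in particular \eqref{E:inequality} is an equality for this $S$). Combining,
\[
\mm^{+}(S)\,{\mathfrak D}_{K,N}(\dist(V,S))=c\,\fs_{K/(N-1)}(T)^{N-1}\,{\mathfrak D}_{K,N}(T)=\mm(C(V,S)),
\]
which is equality in \eqref{E:isoperKN}; for $K=0$, since ${\mathfrak D}_{0,N}(T)=T/N$ and $\fs_{0}(T)=T$, this reads $\mm^{+}(S)\,\dist(V,S)=N\,\mm(C(V,S))$, i.e.\ equality in \eqref{E:isoper}.

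The main obstacle I expect is the soft but slightly delicate second step: checking that the truncated cone --- which carries a timelike boundary --- is genuinely a globally hyperbolic Lorentzian geodesic space in the precise sense used here and inherits $\TCD^{e}_{p}(K,N)$ from the ambient model with the restricted measure. If one prefers to bypass this, one may note that the proof of \cref{T:isop1} uses only the disintegration of \cref{T:disint} (hence $\TMCP^{e}(K,N)$ together with timelike non-branching) and the one-dimensional comparison \eqref{E:MCP0N1d}, both of which can be checked by hand in the explicit coordinates on the model cone. A secondary point to handle with care is the truncation radius in the case $K>0$: one must keep $T$ strictly below the conjugate radius $T_{K,N}$ so that every ray of $X$ reaches the level $\tau_{V}=T$, the slice $V_{T}$ meets each ray transversally in exactly one point, and $\mm^{+}(V_{T})$ is finite and positive.
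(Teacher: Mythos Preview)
Your argument is correct, and it takes a genuinely different route from the paper's own proof. The paper verifies sharpness by a direct coordinate computation: for $K=0$ it parametrises the hyperboloid $S=\{t^{2}-\|x\|^{2}=1\}$ inside a truncated solid cone in $\mathbb{M}^{n+1}$, computes $\mathcal{L}^{n+1}(C(V,S))$ and ${\rm Area}(S)$ by explicit Euclidean integrals, and checks that the identity ${\rm Area}(S)=(n+1)\,\mathcal{L}^{n+1}(C(V,S))$ reduces to an elementary integral identity; the cases $K\neq 0$ are then only sketched via the exponential map of the model space. You instead argue structurally: you observe that in a conical region of the model, the $\tau_{V}$-disintegration has conditional densities with the exact profile $h(\alpha,s)/h(\alpha,T)=[\fs_{K/(N-1)}(s)/\fs_{K/(N-1)}(T)]^{N-1}$, so \eqref{eq:LBhalpha} is an equality, and choosing $S=V_{T}$ forces $b_{\alpha}=\dist(V,S)$ for every $\alpha$; hence both inequalities used in the proof of \cref{T:isop1} are saturated, and equality in \eqref{E:isoperKN} follows at once via \cref{P:identityslice}. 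This has two advantages: it treats all signs of $K$ uniformly, and it makes transparent \emph{why} the cone is extremal (it is precisely the configuration in which the proof of \cref{T:isop1} loses nothing). The paper's approach, on the other hand, has the merit of producing the explicit numerical values without invoking the disintegration machinery, which some readers may find more self-contained. One small remark: your formula $h(\alpha,t)=c\,\fs_{K/(N-1)}(t)^{N-1}$ with $c$ independent of $\alpha$ is stronger than what you actually use; your argument only needs the \emph{ratio} $h(\alpha,s)/h(\alpha,T)$ to match the model, which is immediate from the radial Jacobi equation along timelike geodesics and does not depend on how $\qq$ is normalised.
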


\begin{proof}
We will first consider the two dimensional Minkowski spacetime $\mathbb{M}^{2}$ with metric $-dy^{2} + dx^{2}$ and reference measure the volume measure, i.e., the two dimensional Lebesgue measure. 
Consider the set $$S = \{ (x,y) \in \R^{2} \colon  -y^{2} + x^{2} = -1 \}$$ and  restrict the space to 
$$
X : = \left\{ (x,y) \colon -a \leq x \leq a, \, y \geq |x| \frac{\sqrt{1+a^{2}}}{a} \right\},
$$
where $a>0$ is any fixed positive constant.
Taking $V = \{ (0,0) \}$, then 
$$
C(V,S) = \{  (x,y) \in X \colon -y^{2} + x^{2} \leq 1 \}, \qquad S = \{ (x, \sqrt{1+x^{2}}) \colon x \in (-a,a) \}.
$$
It is straightforward to compute
\begin{align*}
\mathcal{L}^{2}(C(V,S)) &= 2 \left( \int_{(0,a)} \sqrt{1+x^{2}} \,dx  - \frac{a\sqrt{1+a^{2}}}{2}\right) \\&= \left.  (x\sqrt{1+x^{2}} +\sinh^{-1}(x)) \right|^{a}_{0} - a\sqrt{1+a^{2}}.
\end{align*}
The length $\ell$ of $S$ is  given by 
$$
\ell = 2 \int_{0}^{a} \sqrt{1 -y'(x)^{2}}\,dx = 2 \int_{0}^{a} \frac{1}{\sqrt{1+x^{2}}}\,dx = 2 \left. \sinh^{-1}(x) \right|^{a}_{0}.
$$
Since by construction $\dist(V,S) = 1$, for this example, the isoperimetric type inequality \eqref{E:isoper} becomes an identity for $N = 2$.
\smallskip

In higher dimension $N=n+1 \geq 3$, we consider the $n+1$-dimensional Minkowski space $\mathbb{M}^{n+1}$ with metric $g = -dt^{2} + dx_{1}^{2}+\ldots+ dx_{n}^{2}$. 
Consider the cone 
$$
X = \{ (x,t) \colon \|x \| \leq a, t  \geq \| x\| \sqrt{1 +a^{2}}/a \}, \quad \text{for any fixed } a > 0,
$$
and the surface 
\begin{equation}\label{eq:defSSharp}
S = \{ (x,t) \in X  \colon  t^{2}  -
\|x\|^{2} = 1\}.
\end{equation}
The achronal set $V$ will be the origin $O$.

The volume of $C(V,S)$ will be the difference between the volume of the cone 
$$W: = X \cap \{ 0 \leq t \leq  \sqrt{1+a^{2}}\}$$
and the volume of $E$, the epigraph in $W$ of the function $t = \sqrt{1+ \| x\|^{2}}$. 
Then 
\begin{align*}
\mathcal{L}^{n+1} (W) &~ = \int_{0}^{\sqrt{1+a^{2}}} \omega_{n} \left(\frac{a}{\sqrt{1+a^{2}}}\right)^{n} r^{n} \,dr =  \frac{\omega_{n}}{n+1}\left(\frac{a}{\sqrt{1+a^{2}}}\right)^{n} \sqrt{1+a^{2}}^{n+1} \\
&~ = \sqrt{1+a^{2}} a^{n} \frac{\omega_{n}}{n+1},\\
\mathcal{L}^{n+1} (E) &~ = \omega_{n} \int_{1}^{\sqrt{1+a^{2}}} (\sqrt{r^{2}-1})^{n} dr.
\end{align*}
Thus
$$
\mathcal{L}^{n+1} (C(V,S))=\omega_n \left( \frac{a^n}{n+1} \sqrt{1+a^{2}}- \int_{1}^{\sqrt{1+a^{2}}} (\sqrt{r^{2}-1})^{n} dr \right). 
$$
For computing the area of $S$, we parametrize $S$ via the graph of the function $\sqrt{1 +r^{2}}$
over the polar coordinates $(r,\Theta) \in [0,\infty)\times \mathbb{S}^{n-1}$ in $\R^{n}$.
The tangent space of $S$ is spanned by $\partial_{t}$ and $\partial_{\Theta_{i}}, i=1,\dots, n-1$. Restricting the Minkowski metric to $S$, the area form is given  by $r^{n-1}/\sqrt{1+r^{2}}\, dr d\Theta$.
Then 
\begin{align*}
{\rm Area}(S) & = \int_{\mathbb{S}^{n-1}} \int_{0}^{a} r^{n-1}/\sqrt{1+r^{2}} \,dr \,d\Theta =  n \omega_{n} \int_{0}^{a} r^{n-1}/\sqrt{1+r^{2}} \,dr\\
&= n\omega_n \int_1^{\sqrt{1+a^2}} (\sqrt{x^2-1})^{n-2} dx,
\end{align*}
where, in the last identity, we performed the change of variables $x=\sqrt{1+r^2}$.
\\It is possible to check that,  for all $a>0$, $n\geq 2$:
\begin{equation}\label{eq:claimSharp}
  \int_1^{\sqrt{1+a^2}} n (\sqrt{x^2-1})^{n-2} + (n+1) (\sqrt{x^2-1})^{n} dx= a^n \sqrt{1+a^2},
\end{equation}
yielding
$${\rm Area}(S)=(n+1) \mathcal{L}^{n+1} (C(V,S)),\quad \text{for all } a>0, \,n\geq 2.$$
Since, by construction, all the points in $S$ are at distance 1 from the origin $O$, we just showed that $S$ defined in \eqref{eq:defSSharp} achieves the equality in \eqref{E:isoper} for $V=\{O\}$.
\\This shows sharpness for $K=0$, $N\in \mathbb{N}, N\geq 2$. 

For $K\neq 0$, $N\in \mathbb{N}, N\geq 2$, up to scaling we can assume that $K=N-1$ (if $K>0$) or  $K=-(N-1)$  (if $K<0$). One can check that  equality in \eqref{E:isoperKN} is achieved by the following choices.  In the arguments above, replace the Minkowski space by the de Sitter space (in case $K=N-1$) or by the anti-de Sitter space (in case $K=-(N-1)$), the cone $X$ by the exponential of $\exp_p(X)$, the surface $S$ by  $\exp_p(S)$, the domain $W$ by $\exp_p(W)$ and set $V=\{p\}$.
\end{proof}

We next show that \cref{T:isop1} is also rigid. 
\begin{proposition}[Rigidity of \cref{T:isop1}]\label{prop:RigidityIsop}
The inequality \eqref{E:isoperKN} is rigid, in the following sense. In addition to the assumptions of  \cref{T:isop1}, assume that
\begin{enumerate}
    \item[(i)] $S$ is a smooth spacelike hypersurface;
    \item[(ii)] $C(V,S)\setminus V$ is isometric to a smooth Lorentzian manifold $(M^{n+1},g)$, incomplete along $V$ and with boundary $S$;
    \item[(iii)] Equality is achieved in \eqref{E:isoperKN}, namely
\begin{equation}\label{E:isoperKN=}
{\rm Vol}_g^{n}(S)\, {\mathfrak D}_{K,N}(\dist(V,S)) =  {\rm Vol}_g^{n+1}(C(V,S)),
\end{equation}
where ${\rm Vol}_g^{n+1}$ (resp. ${\rm Vol}_g^{n}$) denotes the $(n+1)$-dimensional volume measure associated to $g$ (resp.\;the $n$-dimensional volume measure associated to the restriction of $g$).
\end{enumerate}
Then
\begin{enumerate}
\item[(a)] $V=\{\bar x\}$ is a singleton; denote by $g_S:= \lambda^{-2} g\llcorner TS$ the normalised restriction of $g$ to $TS$, where $\lambda:=\dist(V,S)^{-2}$ is a normalization constant;
\item[(b)] Let 
\begin{equation}\label{eq:defC(S)}
{\rm C}(S):=[0,\dist(V,S)]\times S/ \sim , \text{ where } (0,x)\sim (0,y) \text{ for all } x,y\in S,
\end{equation}
be a (truncated) cone over $S$ and endow it with the Lorentzian metric (defined outside the tip $\{r=0\}$)
\begin{equation}\label{eq:defgC(S)}
    g_{{\rm C}(S)}:=-dr^2+ \fs_{K/n}(r)^2  g_S,
\end{equation}
where  we use the notation  $(r,x)\in [0,\dist(V,S)]\times S/\sim$, and  $\fs_{(\cdot)}(\cdot)$ is defined in \eqref{eq:deffsfc}.

 Then there exists an isometry $\Psi:  {\rm C}(S) \to  C(V,S) $ such that $\Psi(\{r=0\})=V$ is the tip of the cone and $\Psi(\{r= \dist(V,S)\})=S$. 
In particular, $S$ has constant mean curvature.
\end{enumerate} 

If $(ii)$ is replaced by the stronger
\begin{enumerate}
\item[(ii')] $C(V,S)$ is contained in a smooth Lorentzian manifold $(M^{n+1},g)$ complete and without boundary,
\end{enumerate}
then $(b)$ can be improved into
\begin{enumerate}
\item[(b')] $(S,g_S)$ is isometric to a subset of the $n$-dimensional hyperbolic space of constant curvature $-1$, $(\mathbb{H}^n, g_{\mathbb{H}^n})$, and $C(V,S)$ is isometric to a cone in the model space with metric $-dr^2+\fs_{K/n}(r)^2  g_{\mathbb{H}^n}$ (note that $K=0$ gives Minkowski, $K=n$  de Sitter, and $K=-n$ anti-de Sitter)   and with tip at $V=\{\bar{x}\}$. 
\end{enumerate}
\end{proposition}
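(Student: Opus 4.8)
The plan is to read off the equality case from the chain of inequalities in the proof of \cref{T:isop1}, obtaining rigidity of the one-dimensional conditional densities along $\qq$-a.e.\ transport ray of $\tau_V$, and then to upgrade it to smooth differential-geometric rigidity of $C(V,S)$ through a Raychaudhuri comparison along the $\tau_V$-maximizing geodesics. \emph{Step 1: rigidity along the rays.} Run the disintegration of $\mm\llcorner_{\T_V}$ induced by $\tau_V$ from \cref{T:disint}, write $\mm_\alpha=h(\alpha,\cdot)\,\L^1\llcorner_{X_\alpha}$ and identify $X_\alpha\cap C(V,S)$ with $[0,b_\alpha]$ via $\tau_V$, so that $\{z_\alpha\}=S\cap\overline{X_\alpha}$ and $b_\alpha=\tau_V(z_\alpha)\ge\dist(V,S)=:D$. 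The proof of \cref{T:isop1} bounds $\int_0^{b_\alpha}h(\alpha,s)\,ds\ge h(\alpha,b_\alpha)\,{\mathfrak D}_{K,N}(b_\alpha)\ge h(\alpha,b_\alpha)\,{\mathfrak D}_{K,N}(D)$ using \eqref{E:MCP0N1d} and the monotonicity of ${\mathfrak D}_{K,N}$, then combines with \eqref{E:inequality}. Since $t\mapsto{\mathfrak D}_{K,N}(t)$ is \emph{strictly} increasing and $h(\alpha,b_\alpha)>0$ (otherwise $h(\alpha,\cdot)\equiv 0$, contradicting the positivity in \cref{T:disint}), the equality \eqref{E:isoperKN=} forces, for $\qq$-a.e.\ $\alpha$: (i) $b_\alpha=D$, so every ray of the disintegration reaches $S$ exactly at $\tau_V$-level $D$, whence $S=V_D$ up to a $\qq$-negligible family of rays (using that $S$ is a smooth spacelike hypersurface); (ii) equality in \eqref{E:MCP0N1d}, i.e.\ $h(\alpha,s)=h(\alpha,D)\big(\fs_{K/n}(s)/\fs_{K/n}(D)\big)^{n}$ for all $s\in(0,D]$, where $n:=N-1$ (the endpoint by continuity of $h(\alpha,\cdot)$); (iii) equality in \eqref{E:inequality}.

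\emph{Step 2: smooth warped-product structure.} On $M=C(V,S)\setminus V$ the function $\tau_V$ is smooth, its gradient is the future timelike unit field tangent to the $\tau_V$-maximizing geodesics $\gamma^{x}$, $x\in S$, with $g(\nabla\tau_V,\nabla\tau_V)=-1$, and by the Lorentzian Gauss lemma the slices $V_r=\{\tau_V=r\}$, $r\in(0,D]$, are spacelike and $g$-orthogonal to $\nabla\tau_V$. Hence $\Psi(r,x):=\gamma^{x}_r$ is a diffeomorphism $(0,D]\times S\to M$ (injective by timelike non-branching, since $\tau_V\circ\Psi(r,\cdot)\equiv r$) with $\Psi^{*}g=-dr^{2}+g_r$, $g_r$ the metric induced on $V_r$. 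Matching the disintegration with this splitting (identifying $\qq$ with the induced area on $S=V_D$ and $\mm$ with $\vol_g$), condition (ii) becomes: the relative Jacobian $J(r,x)$ of $\Psi$ along $\gamma^{x}$, normalised by $J(D,x)=1$, equals $\big(\fs_{K/n}(r)/\fs_{K/n}(D)\big)^{n}$ for every $x$. Along each $\gamma^{x}$ put $u(r):=J(r,x)^{1/n}=\fs_{K/n}(r)/\fs_{K/n}(D)$, so $u''=-\tfrac Kn u$; on the other hand, writing $A_r$ for the shape operator of $V_r$, one has $u'/u=\tfrac1n\tr A_r$ and, by the Raychaudhuri equation together with Cauchy--Schwarz on the self-adjoint $A_r$,
\[
\frac{u''}{u}=\frac1n(\tr A_r)'+\frac1{n^{2}}(\tr A_r)^{2}\le\frac1n(\tr A_r)'+\frac1n\tr(A_r^{2})=-\frac1n\Ric(\nabla\tau_V,\nabla\tau_V)\le-\frac Kn,
\]
the last step being the timelike Ricci bound $\Ric(\nabla\tau_V,\nabla\tau_V)\ge K$ on $M$ (the infinitesimal content of $\TCD^{e}_{p}(K,N)$ with $N=n+1$ on the smooth region). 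Thus every inequality is an equality for all $r$: $\Ric(\nabla\tau_V,\nabla\tau_V)\equiv K$ and $A_r$ is totally umbilic, $A_r=\tfrac1n(\tr A_r)\,\id=\big(\fs_{K/n}'(r)/\fs_{K/n}(r)\big)\,\id$. Integrating the evolution $\partial_r g_r=2\big(\fs_{K/n}'(r)/\fs_{K/n}(r)\big)g_r$ from $r=D$ gives $g_r=\big(\fs_{K/n}(r)/\fs_{K/n}(D)\big)^{2}g_D$, i.e.\ $\Psi^{*}g=-dr^{2}+\fs_{K/n}(r)^{2}g_S$ with $g_S$ the normalised restriction of $g$ to $TS$ as in (a); this is exactly the cone metric \eqref{eq:defgC(S)}, so (b) follows once $\Psi$ is seen to extend continuously over the tip.

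\emph{Step 3: the tip and the model spaces.} From the warped product, $\operatorname{diam}_{g_r}(V_r)\to0$ as $r\to0^{+}$, and since on the spacelike slices $g_r$ controls $\sfd$, all the geodesics $\gamma^{x}$ share a common initial endpoint $\gamma^{x}_0=\bar x$ (treating connected components of $M$ separately if needed), which is then the unique point of $V$: this is (a), and $\Psi$ descends to the isometry ${\rm C}(S)\to C(V,S)$ sending the tip to $\bar x$ and $\{r=D\}$ to $S$. Under the stronger hypothesis (ii'), the metric $-dr^{2}+\fs_{K/n}(r)^{2}g_S$ is part of a smooth metric without boundary near $\bar x$; since near $r=0$ it is asymptotic to the metric cone over $(S,g_S)$, smoothness at the vertex forces $(S,g_S)$ to be an open subset of the unit round sphere $(\mathbb S^{n},g_{\mathbb S^{n}})$, whence $C(V,S)$ is isometric to the corresponding model cone --- Minkowski for $K=0$, de Sitter for $K=n$, anti-de Sitter for $K=-n$, and rescalings thereof in general --- which is (b').

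I expect the main obstacle to be the bridge in Step 2: matching the synthetic data with the smooth geometry ($h(\alpha,\cdot)$ with the Jacobian of the normal flow of $V$, $\qq$ with the induced area on $S$, and the $\qq$-negligible ``bad'' rays with a genuinely $g$-negligible subset of the smooth hypersurface $S$), keeping track of the sign conventions in the timelike Ricci bound and the Raychaudhuri equation (both of which leave the conclusion $u''\le-\tfrac Kn u$ unchanged), and controlling the behaviour of all quantities as $r\to0^{+}$. The remaining ingredients are comparatively standard comparison-geometry and metric-completion arguments.
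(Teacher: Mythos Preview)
Your proposal is correct and follows essentially the same strategy as the paper: trace equality back through the chain of inequalities in the proof of \cref{T:isop1} to force $b_\alpha=\dist(V,S)$ and the model density profile $h(\alpha,s)=h(\alpha,D)\big(\fs_{K/n}(s)/\fs_{K/n}(D)\big)^{n}$ on $\qq$-a.e.\ ray, then pass to the smooth setting to upgrade this to a warped-product isometry. The paper's own proof is only a sketch; where it appeals to ``standard Jacobi fields computations'' and ``the rigidity in Bishop--Gromov inequality'' (citing Ehrlich--S\'anchez and Chavel), you make the same step explicit via the Raychaudhuri equation and the umbilicity of the shape operator, which is the equivalent differential-geometric mechanism. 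Your honest flagging of the synthetic-to-smooth bridge (identifying $h(\alpha,\cdot)$ with the normal Jacobian, $\qq$ with the area measure on $S$, and handling the $\qq$-negligible set of bad rays) is exactly the point the paper glosses over.
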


\begin{proof}
For the sake of brevity we only sketch the proof.

Following the proof of \cref{T:isop1}, it is clear that  equality in \eqref{E:isoperKN} forces equality in \eqref{eq:LBhalpha} for $\qq$-a.e. $\alpha$, namely:
\begin{equation}\label{eq:LBhalpha=}
\frac{h(\alpha,s)} {h(\alpha,b_{\alpha})} =   \frac{\fs_{K/(N-1)} (s)^{N-1}}{ \fs_{K/(N-1)} (b_\alpha)^{N-1}}, \quad \text{for $\qq$-a.e. $\alpha$, for all $s\in [0,b_\alpha]$}.
\end{equation}
Moreover, the fact that $(0, T_{K,N})\ni t\mapsto {\mathfrak D}_{K,N}(t)$ is increasing forces 
\begin{equation}\label{eq:balpha=dist}
b_\alpha=\dist(V,S), \quad \text{for $\qq$-a.e. $\alpha$}.
\end{equation}
This means that (up to a set of $\qq$-measure zero) one can identify $S$ with the quotient set $Q$ and parametrize $C(V,S)$, up to a set of $\mm$-measure zero, by the ray map 
\begin{equation}\label{eq:defPsi}
\Psi:[0,\dist(V,S)]\times S\to C(V,S), \quad \Psi(s,\alpha):=X_\alpha(s),
\end{equation}
so that $\Psi$ is a Borel bijection (up to a set of $\mm$-measure zero). Notice that, by construction, $\Psi(\{0\}\times S)=V$ and $\Psi(\{\dist(V,S)\}\times S)=S$. Moreover,  \eqref{eq:LBhalpha=} yields that the co-dimension one volume of the $s$-section $\Psi(\{s\}\times S)$ tends to $0$ as $s\to 0$.
\\Now, using the smoothness assumption on $C(V,S)$ and standard Jacobi fields computations, one can mimic the proof of the rigidity in Bishop-Gromov inequality  (see for instance \cite{EhrlichSanchez, Chav06}) in order to infer that $\Psi$ defined in \eqref{eq:defPsi} passes to the quotient \eqref{eq:defC(S)} and defines an isometry between $(C(V,S), g)$ and $({\rm C}(S), g_{{\rm C}(S)})$. This shows $(a)$ and $(b)$. In order to prove $(b')$, it suffices to apply \cite[Thm.\;2.2]{EhrlichSanchez}.
\end{proof}

In the next remark, we discuss a direct application of the isoperimetric inequality \eqref{E:isoper}. 

\begin{remark}[An upper bound on the area of acausal hypersurfaces in a black hole interior]\label{rem:AreaBoundCauchyBlackHole}
Let $(M^{4},g)$ be a globally hyperbolic spacetime of finite volume  and satisfying the Strong Energy Condition (SEC for short, i.e., $\Ric(v,v)\geq 0$ for all $v\in TM$ timelike). $M$ shall be thought as a finite slab in the interior of a black hole.  Of course any black hole metric satisfying the vacuum Einstein equations $ \Ric \equiv 0$ (such as Schwarzschild or Kerr) also satisfies the SEC.

Assume there exists a subset  $\Sigma\subset M$ achronal and past complete. It is natural to expect that the ``singular set at the center of the black hole" satisfies such properties (when $M$ is the black hole interior), at least for a generic black hole.

Let $S\subset I^-(\Sigma)$ be
\begin{itemize}
    \item Either a compact and acausal smooth hypersurface, with $\partial M\cap S=\emptyset$;
    \item or a (possibly unbounded) smooth achronal hypersurface with $\partial^-_\Sigma S=\emptyset$; in particular this holds if $S$ is a Cauchy hypersurface.
\end{itemize}
The quantity $\dist(S, \Sigma)$ shall be thought as the Lorentzian distance from the hypersurface $S$ to the singular set $\Sigma$.

Applying Theorem \ref{T:isop1} (see also \cref{R:otherinequalities} and \cref{cor:IsopSmooth}) to the causally reversed structure (i.e., backward in time), we obtain 
\begin{equation}\label{E:isoperBH}
{\rm Vol}^3_{g} (S)\, \dist(S, \Sigma) \leq 4 \, {\rm Vol}^4_g(M),
\end{equation}
giving an upper bound on the area of the  hypersurface $S$ (with respect to the $3$-dimensional volume measure ${\rm Vol}^3_{g}$ associated to the restriction of $g$ to $S$) in terms of its time-distance from the singular set $\Sigma$ and the volume ${\rm Vol}^4_g(M)$ (of the slab) of the black hole interior $M$  (with respect to the $4$-dimensional volume measure ${\rm Vol}^4_{g}$ associated to $g$).
\end{remark}

\begin{example}[An upper bound on the area of acausal hypersurfaces in the Schwarzschild black hole interior]\label{Example:SchwInterior}
To fix the ideas by an explicit example, consider a finite slab $\{t\in [a,b]\}$  in the interior  of the Schwarzschild black hole
\begin{equation}\label{eq:defScwSlab}
M:=\{t\in [a,b]\} \cap \{r\leq 2m\} 
\end{equation}
endowed, in region  $ \{0<r< 2m\}$,  with the  metric
\begin{equation}\label{eq:gschw}
g:=-\left(1-\frac{2m}{r}\right) dt^2+ \left(1-\frac{2m}{r}\right)^{-1} dr^2 + r^2(d\theta^2 +\sin^2\theta \, d\varphi^2)
\end{equation}
and with time orientation so that $-\frac{\partial}{\partial r}$ is future oriented. It is well-known that the singularity at $\{r=2m\}$ is just apparent, in the sense that the metric $g$ is perfectly smooth after a suitable coordinate change; on the contrary, at $\{r=0\}$ the singularity is at the $C^0$-level (see \cite{Sbierski}). 
Note that in the black hole interior, $r$ is a timelike variable while $t$ is a spacelike variable.
It is clear that the singular set $\Sigma:=\{r=0, t\in [a,b]\}$ is achronal. 
\\ Let  $S \subset \{0<r< 2m\}$ be a smooth compact acausal  hypersurface; a (trivial) example of such a hypersurface is $\{r=r_0\}$, $r_0\in (0, 2m)$.

Repeating the proof of Theorem \ref{T:isop1} to the causally reversed structure (i.e., backward in time) as in  \eqref{E:isoperBH}, we obtain
\begin{equation}\label{E:isoperSch}
{\rm Vol}^3_{g} (S)\,   \inf_S \tau_{\Sigma}(r)  \leq 4 {\rm Vol}^4_{g}(\{r\leq 2m, t\in [a,b]\})= \frac{128}{3} \pi m^3 (b-a),
\end{equation}
where $\tau_{\Sigma}$ depends only on the $r$-coordinate and is given by the expression 
\begin{equation}\label{eq:tau(r)Schw}
\tau_{\Sigma}(r)=\pi m - \sqrt{2mr-r^2}- 2m \arctan \left( \sqrt \frac{2m-r}{r} \right).
\end{equation}
Notice that $\tau_{\Sigma}(r_0)$ is the maximal proper time that may lapse for a massive observer initially at $r=r_0\in (0,2m]$ before hitting the singularity $\Sigma$. 

While well-known to experts, let us briefly sketch the proof of the expression \eqref{eq:tau(r)Schw} for completeness of presentation.
Let $\gamma_\tau=(t(\tau), r(\tau), \theta(\tau), \varphi(\tau))$, $\tau\in [0,T]$, 
be a future directed timelike curve parametrized by proper time $\tau$:
\begin{equation*}
1= \left(\frac{2m}{r}-1\right)^{-1} \left(\frac{d r} {d \tau}\right)^2- \left(\frac{2m}{r}-1\right) \left( \frac{d t} {d \tau} \right)^2 -r^2 \left(\frac{d \theta} {d \tau} \right)^2 - r^2 \sin^2 \theta   \left(\frac{d \varphi} {d \tau} \right)^2.
\end{equation*}
 From the fact that $\gamma$ is future directed, we infer that $\tau\mapsto r(\tau)$ is strictly decreasing and 
\begin{equation}\label{eq:drdtaugeq}
\frac{d r} {d \tau} \leq - \sqrt{\frac{2m}{r}-1},
\end{equation}
with equality if and only if 
\begin{equation}\label{spacial=0}
\left(\frac{2m}{r}-1\right) \left( \frac{d t} {d \tau} \right)^2 +r^2 \left(\frac{d \theta} {d \tau} \right)^2 + r^2 \sin^2 \theta   \left(\frac{d \varphi} {d \tau} \right)^2=0.
\end{equation}
Recall that the aim here is, given $\gamma_0=(t_0, r_0, \theta_0, \varphi_0)\in M$, find (if it exists) the future timelike curve $(\gamma_\tau)_{\tau\in [0,T]}$ with $\gamma_{T}\in \Sigma$ (i.e., $r(\gamma_T)=0$) parametrized by proper time, and having the maximal $T$.  This amounts to find the future directed timelike curve having maximal $\frac{dr}{d \tau}$. From \eqref{eq:drdtaugeq} and \eqref{spacial=0}, it is clear that such a curve has to be radial, i.e., $\gamma_\tau=(t_0, r(\tau), \theta_0, \varphi_0)$, and such that
$$
T=\int_0^{r_0} \left(\frac{2m}{r}-1\right)^{-1/2} \,dr
= \pi m - \sqrt{2mr_0-r_0^2}- 2m \arctan \left( \sqrt{\frac{2m-r_0}{r_0}}  \right).
$$
This completes the proof of \eqref{eq:tau(r)Schw}.

One can deduce analogous bounds for de-Sitter Schwarzschild and anti de Sitter-Schwarz\-schild black holes, by using the more general \eqref{E:isoperKN}.
\end{example}

\begin{remark} [An upper bound on the area of acausal hypersurfaces in cosmological spacetimes]\label{rem:CauchyCosmological}
Another situation where Theorem \ref{T:isop1} seems to give some new geometric information,  is for cosmological spacetimes. In this case, the manifold is homeomorphic to a Lorentzian cone $C(\Sigma)$ over a manifold $\Sigma$, with coordinates $(x,t), t\geq 0, x\in \Sigma$ (and diffeomorphic on the open subset $\{t>0\}$), with $\Sigma\times\{t\}$ spacelike slices for any $t>0$ and with $\frac{\partial}{\partial t}$ timelike. In such a model, the point $\{t=0\}$ corresponds to the origin of the universe,  i.e., the  ``big-bang".

In Theorem  \ref{T:isop1} (see also \cref{R:otherinequalities}), we can choose $V=\{t=0\}$  and  $S\subset\{t>0\}$ such that
\begin{itemize}
    \item either $S$ is a compact and acausal smooth hypersurface;
    \item or $S$ is a (possibly unbounded) smooth achronal hypersurface with $\partial^+_V S=\emptyset$; in particular this holds if $S$ is a Cauchy hypersurface.
\end{itemize}

The quantity $\dist(V,S)$ could be loosely interpreted as a kind of  ``age" of $S$, while the lower bound $K$ on the timelike Ricci curvature is related to the cosmologiocal constant and the energy momentum tensor via the Einstein equations.
\\In previous literature  \cite[Thm.\;2 and Prop.\;3]{ACKW09}, area bounds on  achronal spacelike hypersurfaces were proved in Friedman-LeMa\^itre-Robertson-Walker spacetimes with non-negative  timelike Ricci curvature (see also \cite{Flaim} for related results). These are  cosmological spacetimes where the time-slices  $\Sigma\times\{t\}$ have constant sectional curvature for all $t>0$, i.e., are homogenous and isotropic. 
Although such symmetries are satisfied at a very good level of approximation at the scale of the universe,  recent observations detected some anomalies in the cosmic microwave background that are challenging such a model (see for instance \cite{ESA}). Since Theorem \ref{T:isop1} does not assume any symmetry and allows any $K\in \R$, it  gives an area bound on $S$ also in the case when the timelike Ricci curvature is bounded below by a negative constant $K$ (thus allowing more freedom to the cosmological constant and to the energy-momentum tensor), and the  time slices $\Sigma\times\{t\},\,  t>0,$ are not necessarily homogenous and isotropic. 
\end{remark}

\section{Further Localization results}
\label{S:Further}
For completeness, we include a brief discussion on a generalisation of Theorem \ref{T:local}.
The results of  \cref{Ss:transportrelation} and  \cref{Ss:disintegrationregularity}
are indeed valid for a wider class of functions then time separation functions from 
achronal timelike complete sets, namely for solutions of the dual Kantorovich problem for $p = 1$. 
In analogy with the metric theory, this class coincides with the class 
of timelike reverse 1-Lipschitz functions defined as follows: 
$$
u : X \to \R, \qquad u(y) - u(x) \geq \ell(x,y), \quad \forall \ x,y \in X. 
$$
For a  fixed timelike reverse 1-Lipschitz function $u$,
define the transport relation as 
$$
\Gamma_{u} : = \{(x,y) \in X_{\leq} \colon u(y) - u(x) = \tau(x,y) \}.
$$
One can check that $\Gamma_{u}$ is $\ell$-cyclically monotone:
for any $n\in \N$ and any family $(x_{1}, y_{1}), \ldots, (x_{n}, y_{n})$ of points in $\Gamma_{u}$: 
\begin{align*}
\sum_{i=1}^{n}\tau(x_{i}, y_{i})
=&~ \sum_{i=1}^{n} u(y_{i}) -u(x_{i})  \\
=&~ \sum_{i=1}^{n} u(y_{i +1 }) -u(x_{i})  \geq \sum_{i=1}^{N}\ell(x_{i+1}, y_{i}). 
\end{align*}

It is then natural to define $R_{u}, \T_{u}^{end}, \fa(\T_{u}^{end}), \fb(\T_{u}^{end})$ and $\T_{u}$ as in \eqref{E:transport}, \eqref{eq:defendpoints}, \eqref{E:nbtransport} 
with the replacement of $\Gamma_{V}$ by $\Gamma_{u}$.  
By the timelike non-branching property, $\mathcal{T}_{u}$ is partitioned by transport rays induced by $u$,
precisely as for $\tau_{V}$.
Then \cref{P:cpgeod}
can be applied to $\Gamma_{u}$  
to obtain, repeating verbatim the calculations done for $\tau_{V}$,  
the following disintegration result for $\mm$.

Notice that however we do not make any claim on the size of $\mathcal{T}_u$ which, possibly depending on the function $u$,  might be even empty.

\begin{theorem}\label{T:generalDisintegration}
Let  $(X,\sfd, \mm, \ll, \leq, \tau)$ be a timelike non-branching,  globally hyperbolic, Lorentzian geodesic space satisfying $\mathsf{TCD}^{e}_{p}(K,N)$ and assume that the causally-reversed structure satisfies the same conditions.\\ Let $u : X \to \R$ be a timelike reverse $1$-Lipschitz function. 
\\Then $\mm(\fa(\T_{u}^{end}))=\mm(\fb(\T_{u}^{end})=0$ and the following disintegration formula holds true: 
\begin{equation}\label{E:disintegrationLip}
\mm\llcorner_{\T^{end}_{u}} = 
\mm\llcorner_{\T_{u}} 
= \int_{Q} \mm_{\alpha}\, \qq(d\alpha)= \int_{Q} h(\alpha,\cdot) \, \L^{1}\llcorner_{X_{\alpha}}\, \qq(d\alpha),
\end{equation}
where
\begin{itemize}
\item $\qq$ is a probability measure over the Borel quotient set $Q \subset \T_{u}$;
\item  $h(\alpha,\cdot)\in L^{1}_{loc}(X_{\alpha}, \L^{1}\llcorner_{X_{\alpha}})$ for $\qq$-a.e. $\alpha\in Q$;
\item  the map 
$\alpha \mapsto \mm_{\alpha}(A)= h(\alpha,\cdot)\L^{1}\llcorner_{X_{\alpha}}(A)$ is 
$\qq$-measurable for every Borel set $A \subset \T_{V}$.
\item For $\qq$-a.e. $\alpha$ the one-dimensional metric measure space 
$(X_{\alpha},|\cdot|, \mm_{\alpha})$ satisfies the classical $\CD(K,N)$ condition, i.e.\;\eqref{eq:DiffIneqCDKN} holds.
\end{itemize}
\end{theorem}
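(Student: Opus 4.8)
The plan is to mirror, word for word, the chain of constructions and results that was carried out for the specific function $\tau_V$ in \cref{Ss:transportrelation}, \cref{Ss:disintegrationregularity} and \cref{S:localization}, observing that at no point did those arguments use anything about $\tau_V$ beyond the following two structural facts: (i) the associated ``transport set'' $\Gamma_{\tau_V}$ is $\ell$-cyclically monotone, which in turn forces the alignment-along-geodesics property of pairs in $\Gamma_{\tau_V}$; and (ii) the monotone-rearrangement plans built along rays (via \cref{P:cpgeod}) are $\ell^p$-cyclically monotone, hence $\ell_p$-optimal by \cref{prop:cicmon<->opt}. For a timelike reverse $1$-Lipschitz $u$, fact (i) is exactly the short computation already displayed in the excerpt just above the statement, and fact (ii) only requires that $\Delta\subset\Gamma_u$ satisfy the monotonicity condition \eqref{E:monotone}, which makes sense with $\tau_V$ replaced by $u$ verbatim. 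So the real content is a bookkeeping verification that every lemma invoked for $\tau_V$ (measurability of $\Gamma_u$, $R_u$, $\T_u^{end}$, $\T_u$; existence of an $\mathcal A$-measurable quotient map $\QQ$; \cref{lem:XalphaI}; \cref{L:initialpoint}) goes through for $u$.

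Concretely, the steps I would carry out, in order, are as follows. First, recall the timelike non-branching hypothesis to conclude that $R_u$ is an equivalence relation on $\T_u$ whose classes are rays, each ray isometric (via \eqref{eq:FIsometry}, with the same proof as \cref{lem:XalphaI}) to a real interval on which $u$ restricts to an affine function of slope $-1$; here $u$ plays the role $\tau_V$ played before, and the one-point intersection of $V$ with $\overline{X_\alpha}$ is replaced by the (possibly empty) set of endpoints of the ray. Second, redo the measurability discussion from \cref{Ss:transportrelation}: $\Gamma_u$ is Borel because $u$ is Borel (indeed continuous) and $\ell$ is upper semicontinuous, so $\T_u^{end}$ is analytic and $\T_u$ is analytic, and the selection-theorem argument of \cref{lem:Qlevelset} yields an $\mathcal A$-measurable quotient map $\QQ$ and quotient set $Q$. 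Third, invoke the Disintegration Theorem to write $\mm\llcorner_{\T_u}=\int_Q \mm_\alpha\,\qq(d\alpha)$ with strong consistency, exactly as in \cref{T:disint}; the proof that $\mm(\fa(\T_u^{end}))=\mm(\fb(\T_u^{end}))=0$ and that $\mm_\alpha\ll\L^1\llcorner_{X_\alpha}$ with locally Lipschitz, positive density $h(\alpha,\cdot)$ on the interior of $X_\alpha$ is the same argument (it uses $\TMCP^e(K,N)$ applied to normalized Lebesgue measures transported along rays, i.e.\ \cref{P:cpgeod} plus \cref{T:1}), with $\tau_V$-sublevel/superlevel sets replaced by $u$-sublevel/superlevel sets. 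Fourth, run the proof of \cref{T:local} line by line: take $\bar Q\subset Q$ compact of positive $\qq$-measure on which the rays are uniformly long and stay in a compact set, build $\mu_0,\mu_1$ as ray-wise normalized Lebesgue measures on intervals $[A_0,A_0+L_0]$, $[A_1,A_1+L_1]$ parametrized now by $u$ (whose level sets replace $\{\tau_V=a_i\}$), use \cref{P:cpgeod} and \cref{prop:cicmon<->opt} to get $\ell_p$-optimality, use \cref{P:summaryTCD} to get the pointwise concavity \eqref{E:pointwiseTCD}, specialize to $s=1/2$, optimize over $L_0,L_1$, apply \cite[Prop.\;5.5]{BS10}, and pass $K'\to K$ by stability of $\CD$ — obtaining that $(X_\alpha,|\cdot|,\mm_\alpha)$ satisfies $\CD(K,N)$, i.e.\ \eqref{eq:DiffIneqCDKN} holds for $h(\alpha,\cdot)$. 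Finally, assemble these into the four bullet points of the statement.

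The one point that genuinely requires care — and is the main obstacle — is the passage from the one-point picture ($V\cap\overline{X_\alpha}$ is a singleton identified with the parameter value $0$) to the general ray picture for $u$: a ray $X_\alpha$ of $\Gamma_u$ need not contain either of its endpoints, and the function $u$ only pins down an affine coordinate up to an additive constant on each ray, so one must choose a measurable normalization $\alpha\mapsto$ (base point / origin of the parameter) compatibly with the quotient map $\QQ$. This is handled exactly as in \cite[Sect.\;4.1--4.2]{CaMo:20}: one fixes the origin on $X_\alpha$ to be $\QQ(\alpha)$ itself, parametrizes $X_\alpha$ by signed $u$-difference from $\QQ(\alpha)$, and checks Borel/analytic measurability of the resulting ray map $Q\times\R\supset\{(\alpha,t)\colon \QQ(\alpha) \text{ flows to parameter } t\}\to X$ using the continuity of $\tau$ and of $u$ together with the non-branching property. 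Once this normalization is in place, every estimate in the $\tau_V$-proof is translation-invariant in the parameter and transfers without change; in particular the density estimates \eqref{E:MCP0N1d} and the differential inequality \eqref{eq:DiffIneqCDKN} are statements about $h(\alpha,\cdot)$ as a function on the interval $X_\alpha$ and do not see the choice of origin. I would therefore present the proof as: ``All the constructions and proofs of \cref{Ss:transportrelation}, \cref{Ss:disintegrationregularity} and \cref{S:localization} apply verbatim, replacing $\tau_V$ by $u$, $\Gamma_V$ by $\Gamma_u$, and the singleton $V\cap\overline{X_\alpha}$ by the origin $\QQ(\alpha)$ of the ray; the only modification is the measurable normalization of the ray parametrization just described.''
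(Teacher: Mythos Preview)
Your proposal is correct and takes essentially the same approach as the paper: the paper's proof is the single-paragraph observation (appearing just before the statement) that $\Gamma_u$ is $\ell$-cyclically monotone, that the sets $R_u, \T_u^{end}, \fa, \fb, \T_u$ are defined exactly as for $\tau_V$, and that \cref{P:cpgeod} applied to $\Gamma_u$ lets one repeat verbatim the calculations done for $\tau_V$. Your outline is simply a more detailed unpacking of the same verbatim transfer, and your remark on the measurable normalization of the ray parametrization via $\QQ(\alpha)$ is exactly the kind of bookkeeping the paper sweeps into the phrase ``repeating verbatim''.
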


Finally let us note notice that 
if one replaces the $\TCD^{e}_{p}(K,N)$ assumption by the weaker 
$\TMCP^{e}(K,N)$, then 
all the claims of Theorem \ref{T:generalDisintegration} remain valid except from the last point 
that has to be replaced by ``$(X_{\alpha},|\cdot|, \mm_{\alpha})$ satisfies the classical $\MCP(K,N)$, i.e.\;\eqref{E:MCP0N1d} holds.''

\subsection{Sharp Brunn-Minkowski inequality via localization}

The metric version of Theorem \ref{T:generalDisintegration} 
goes back to \cite{CM1} and, previously, the Riemannian version to \cite{klartag}. 
There, many geometric inequalities (in their sharp form) where obtained by applying  
the theorem to a special function $u$ associated to an optimal transport problem 
induced by a Borel function $f : X \to \R$ having zero mean $\int_{X}f \, \mm = 0$: 
the function $u$ was a Kantorovich potential for the $W_{1}$ optimal transport problem between the 
measures $\mu_{0} : = f^{+}\, \mm$ and $\mu_{1} : = f^{-}\,\mm$, where $f^{\pm}$ are the positive and the 
negative part of $f$.
In this case, the disintegration induced by $u$ localises also the zero mean property of $f$: 
for $\qq$-a.e. $\alpha \in Q$
$$
\int_{X_{\alpha}} f \, \mm_{\alpha} = 0.
$$

It would be therefore desirable to include in the list of properties of $\mm_{\alpha}$ of  
\cref{T:generalDisintegration}, in the case $u$ is the Kantorovich potential associated to $f$, 
also the localization of the zero mean condition. 

It will be obtained by directly adapting the argument of \cite{CM1} 
with the additional difficulty that we cannot rely on the existence of a solution 
to the dual Kantorovich potential.

For ease of notation, we will drop the normalisation factor by directly assuming that $\mu_{0}$ and $\mu_{1}$ are probability measures.

\begin{theorem}\label{T:localization}
Let  $(X,\sfd, \mm, \ll, \leq, \tau)$ be 
a timelike non-branching,  globally hyperbolic, Lorentzian geodesic space satisfying $\mathsf{TCD}^{e}_{p}(K,N)$ and assume that the causally-reversed structure satisfies the same conditions. 

Moreover assume that $\int f \, \mm = 0$ for some real valued Borel function $f$ and 
that the pair of probability measure $(\mu_{0} : = f^{+}\, \mm, \mu_{1} : = f^{-}\, \mm)$ is 
timelike $1$-dualisable. 
Then there exists an $\ell$-cyclically monotone set $\Gamma_{f}$  inducing the following decomposition of the space: $X = Z \cup \mathcal{T}$, with 
$\mm(Z \setminus \{ f = 0 \}) = 0$ and  $\mathcal{T}$ obtained 
as the disjoint union of a family of timelike geodesics $\{ X_{\alpha}\}_{\alpha \in Q}$
inducing the following disintegration formula: 
$$
\mm\llcorner_{\T} 
= \int_{Q} \mm_{\alpha}\, \qq(d\alpha)= \int_{Q} h(\alpha,\cdot) \, \L^{1}\llcorner_{X_{\alpha}}\, \qq(d\alpha), \qquad \qq \in \mathcal{P}(Q), \quad Q \subset \mathcal{T}.
$$
Moreover, for $\qq$-a.e. $\alpha\in Q$, the following hold:
\begin{itemize}

\item  $h(\alpha,\cdot)\in L^{1}_{loc}(X_{\alpha}, \L^{1}\llcorner_{X_{\alpha}})$; 
\item  $\int_{X_{\alpha}} f \,\mm_{\alpha} = 0$;
\item the one-dimensional m.m.s.
$(X_{\alpha},|\cdot|, \mm_{\alpha})$ satisfies the $\CD(K,N)$ condition, i.e.\;\eqref{eq:DiffIneqCDKN} holds.
\end{itemize}
\end{theorem}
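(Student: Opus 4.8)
The plan is to follow the by-now classical disintegration scheme of \cite{CM1, klartag}, transplanted to the Lorentzian setting, but working around the fact that we do not know a priori that a Kantorovich potential for the $W_1$ (i.e.\ $\ell_1$) problem between $\mu_0=f^+\mm$ and $\mu_1=f^-\mm$ exists as a genuine function. First, since $(\mu_0,\mu_1)$ is timelike $1$-dualisable, Proposition~\ref{prop:cicmon<->opt} furnishes an $\ell_1$-optimal plan $\pi$ concentrated on an $\ell$-cyclically monotone set; by the very definition of timelike $1$-dualisability and the discussion in \cite[Sec.\,2.4]{CaMo:20}, one extracts a measurable $\ell$-cyclically monotone set $\Gamma_f\subset X^2_{\ll}\cap(\supp\mu_0\times\supp\mu_1)$ carrying every optimal plan. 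I would then build the ``potential-free'' transport relation directly from $\Gamma_f$: take its $\ell$-monotone closure/chain relation
$$
R_f:=\Gamma_f\cup\Gamma_f^{-1}\cup\{(x,x):x\in X\}\cup\{\text{two-step chains through }\Gamma_f\},
$$
more precisely declare $x\sim y$ if there is a finite $\Gamma_f$-chain $x=z_0,z_1,\dots,z_k=y$ with consecutive pairs in $\Gamma_f\cup\Gamma_f^{-1}$ all lying on one timelike geodesic. This mimics the construction of $\T_u$ from $\Gamma_u$ in \cref{Ss:transportrelation}, the only difference being that the ``level function'' $\tau_V$ (or $u$) is replaced by an intrinsic arc-length parameter along each ray.

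\textbf{Key steps.} (1) \emph{Ray structure.} Using the timelike non-branching hypothesis exactly as in \cref{lem:XalphaI}, show that on the transport set $\mathcal T$ (the union of nontrivial equivalence classes, minus initial/final points) the relation $R_f$ is an equivalence relation whose classes $X_\alpha$ are isometric to real intervals, and that $\mm(Z\setminus\{f=0\})=0$ where $Z:=X\setminus\mathcal T$ — this last point is the Lorentzian analogue of ``the branching/degenerate set is $f$-null'', and it follows because any point not on a nontrivial ray cannot be moved by an optimal plan, so it carries no mass of $\mu_0+\mu_1=|f|\mm$. (2) \emph{Measurable selection and disintegration.} Reproduce \cref{lem:Qlevelset} and \cref{T:disint} verbatim: $\mathcal T$ is analytic, there is an $\mathcal A$-measurable quotient map, and the Disintegration Theorem gives $\mm\llcorner_{\mathcal T}=\int_Q\mm_\alpha\,\qq(d\alpha)$ with $\mm_\alpha$ concentrated on $X_\alpha$. (3) \emph{Absolute continuity and $\CD(K,N)$ on rays.} Apply \cref{P:cpgeod} and then repeat the computation in the proof of \cref{T:local} word for word — the argument there only used that $\Gamma_V\subset\Gamma_{(\cdot)}$ satisfies the monotonicity \eqref{E:monotone} along rays, which $\Gamma_f$ also does once rays are parametrized monotonically — to conclude $\mm_\alpha\ll\L^1\llcorner_{X_\alpha}$, with density $h(\alpha,\cdot)$ satisfying \eqref{eq:DiffIneqCDKN}. (4) \emph{Localization of the zero-mean condition.} This is the heart of the matter: show $\int_{X_\alpha}f\,\mm_\alpha=0$ for $\qq$-a.e.\ $\alpha$. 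The idea, following \cite{CM1}, is that the optimal plan $\pi$ itself disintegrates consistently with the ray decomposition — any optimal plan must move mass only \emph{within} equivalence classes (else one could improve cyclical monotonicity) — so writing $\pi=\int_Q\pi_\alpha\,\qq(d\alpha)$ with $\pi_\alpha$ optimal on $X_\alpha\times X_\alpha$ and noting $(P_1)_\sharp\pi_\alpha=f^+\mm_\alpha$, $(P_2)_\sharp\pi_\alpha=f^-\mm_\alpha$ (up to normalization), integrating gives $\int_{X_\alpha}f^+\mm_\alpha=\int_{X_\alpha}f^-\mm_\alpha$, i.e.\ $\int_{X_\alpha}f\,\mm_\alpha=0$. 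Making this rigorous requires a careful measurable-selection argument proving that the restriction of $\pi$ to pairs over a ray is, after normalization, an optimal plan between the restricted marginals, and that these marginals are exactly $f^\pm\mm_\alpha$.

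\textbf{Main obstacle.} The delicate point is precisely step (4) in the absence of a dual potential. In the metric case \cite{CM1} one disintegrates the Kantorovich potential's level structure and uses that $u$ is constant on the "transversal" directions; here I must argue purely from $\ell$-cyclical monotonicity of $\Gamma_f$ and the timelike non-branching property that \emph{every} optimal plan is supported on $R_f$ and compatible with the quotient map $\QQ$. Concretely, the obstacle is to show that the conditional plans $\pi_\alpha$ are themselves $\ell_1$-optimal between $f^+\mm_\alpha$ and $f^-\mm_\alpha$ (so that no mass leaks between rays and the marginal identities hold exactly), which needs a restriction/gluing lemma for $\ell_1$-optimal plans analogous to the ones used for $\ell_p$-optimal dynamical plans but at the level of static plans, together with the measurability of $\alpha\mapsto\pi_\alpha$. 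Once this compatibility is established, the zero-mean localization and hence the full statement follow by the same integration trick as in the Riemannian and metric settings. I would also need to check that $\supp\mu_0\times\supp\mu_1\subset X^2_{\ll}$ can be arranged (or at least that $\Gamma_f$ can be taken inside $X^2_{\ll}$), which the timelike $1$-dualisability hypothesis grants via \cref{lem:q-dualPcX}-type reasoning.
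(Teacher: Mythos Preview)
Your overall plan is sound and tracks the paper's argument closely through steps (1)--(3). Two points deserve comment.

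First, on the enlargement of $\Gamma_f$: rather than a chain relation, the paper takes the explicit ``geodesic filling''
\[
\Gamma := \bigl\{(x,y)\in X^2_{\ll} : \exists\,(z,w)\in\Gamma_f \text{ with } \tau(z,w)=\tau(z,x)+\tau(x,y)+\tau(y,w)\bigr\},
\]
i.e.\ all ordered pairs of intermediate points along geodesics witnessing $\Gamma_f$. A four-line computation with the reverse triangle inequality shows $\Gamma$ is again $\ell$-cyclically monotone, and by construction it is closed under taking sub-segments of geodesics; this is exactly what feeds into the machinery behind \cref{T:generalDisintegration}. Your chain construction would still need an extra argument to recover this geodesic closure before \cref{lem:XalphaI} can be invoked.

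Second, and more importantly, you are overcomplicating step~(4). No disintegration of $\pi$, no optimality of conditional plans $\pi_\alpha$, and no restriction/gluing lemma is required. Since $\pi$ is concentrated on $\Gamma$ and every pair in $\Gamma$ lies on a single ray, for any Borel $A\subset Q$ the saturated set $R(A)=\QQ^{-1}(A)$ satisfies
\[
\mu_0(R(A)) \;=\; \pi\bigl((R(A)\times X)\cap\Gamma\bigr) \;=\; \pi\bigl((X\times R(A))\cap\Gamma\bigr) \;=\; \mu_1(R(A)),
\]
the middle equality holding simply because $(x,y)\in\Gamma$ forces $x\in R(A)\Leftrightarrow y\in R(A)$. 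Writing $\mu_0=f^{+}\mm$, $\mu_1=f^{-}\mm$ and disintegrating $\mm$ over $Q$ turns this into
\[
\int_A\int_{X_\alpha} f^{+}\,\mm_\alpha\,\qq(d\alpha) \;=\; \int_A\int_{X_\alpha} f^{-}\,\mm_\alpha\,\qq(d\alpha)\qquad\text{for every Borel }A\subset Q,
\]
hence $\int_{X_\alpha} f\,\mm_\alpha=0$ for $\qq$-a.e.\ $\alpha$. The ``main obstacle'' you identify is therefore illusory: all that is used is that $\pi$ respects the partition into rays, not any finer structure of the conditionals $\pi_\alpha$.
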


\begin{proof}
By assumption, there exists an $\ell$-cyclically monotone set $\Gamma_{f} \subset X_{\ll}^{2}$ 
and 
$\pi \in \Pi_{\leq}(\mu_{0},\mu_{1})$ that is $\ell$-optimal 
and $\pi(\Gamma_{f}) = 1$. 

We now enlarge $\Gamma_{f}$ 
by filling its possible holes so to restore the regularity we would have if 
$\Gamma_f$ was included in the subdifferential of a Kantorovich potential. 
To this aim, define
$$
\Gamma : = \{ (x,y) \in X^{2}_{\ll} \colon  \tau(z,w) = \tau(z,x) + \tau(x,y)+ \tau(y,w), (z,w) \in \Gamma_{f}\}.
$$
We claim that $\Gamma$ is $\ell$-cyclically monotone; indeed, for $(x_{i},y_{i}) \in \Gamma$ 
\begin{align*}
\sum_{i} \tau(x_{i},y_{i}) 
= &~ \sum_{i} \tau(z_{i},w_{i}) - \tau(z_{i},x_{i}) - \tau(y_{i},w_{i}) \\
\geq &~ \sum_{i} \ell(z_{i+1},w_{i}) -  \tau(z_{i},x_{i}) - \tau(y_{i},w_{i}) \\
\geq &~ \sum_{i} \ell(z_{i+1},x_{i+1}) +\ell(x_{i+1},w_{i})  -  \tau(z_{i},x_{i}) - \tau(y_{i},w_{i}) \\
\geq &~ \sum_{i} \ell(z_{i+1},x_{i+1}) +\ell(x_{i+1},y_{i}) + 
\ell(y_{i},w_{i})   -  \tau(z_{i},x_{i}) - \tau(y_{i},w_{i}) \\
= &~ \sum_{i} \ell(x_{i+1},y_{i}).
\end{align*}
This implies that the pairs belonging to $\Gamma$ are aligned along geodesics: 
if $(x,y) \in \Gamma$ and  $\gamma \in \TGeo(x,y)$ then 
$(\gamma_{s},\gamma_{t}) \in \Gamma$  for all $0\leq s \leq t \leq 1$.
Then we can proceed by defining $R, \T^{end}, \fa(\T^{end}), \fb(\T^{end})$ and $\T$
like few lines before \cref{T:generalDisintegration}. 
Thanks to 
 \cref{T:generalDisintegration}
we obtain 
$$
\mm\llcorner_{\T^{end}} = 
\mm\llcorner_{\T} 
= \int_{Q} \mm_{\alpha}\, \qq(d\alpha)= \int_{Q} h(\alpha,\cdot) \, \L^{1}\llcorner_{X_{\alpha}}\, \qq(d\alpha),
$$
where $\qq$ is a probability measure over the Borel quotient set $Q \subset \T$, 
and for $\qq$-a.e.\;$\alpha$ the one-dimensional metric measure space 
$(X_{\alpha},|\cdot|, \mm_{\alpha})$ satisfies the $\CD(K,N)$ condition.
In particular $\mm_{\alpha}= h(\alpha,\cdot)\mathcal{L}^{1}\llcorner_{X_{\alpha}}$.
We are only left to prove that if $Z = X \setminus \mathcal{T}$, then $f = 0$ $\mm$-a.e.\;over $Z$, 
and that $\int_{Q} f \, \mm_{\alpha} = 0$, for $\qq$-a.e.\;$\alpha$.

Since $\mu_{0}\perp \mu_{1}$, then $\pi(\{ (x,x) \colon x \in X \}) = 0$.
Denoting by $\Delta : = \{ (x,x) \colon x \in X \}$, 
since $\pi(\Gamma) = 1$, we have
$$
\mu_{0}(\mathcal{T}) = \mu_{0} (P_{1}(\Gamma \setminus \Delta)) \geq \pi (\Gamma \setminus \Delta) = 1;
$$
in the same way $\mu_{1}(\mathcal{T}) = 1$.  
This implies that 
$\mu_0(Z) = \mu_1(Z) = 0$. 
Since $\mu_0 = f^+ \mm$ 
and $\mu_1 = f^- \mm$, 
then $Z$ is a subset of $\{ f= 0\}$,  up to a set of $\mm$-measure zero.

\smallskip
We are left to show the balance condition $\int_{Q} f \, \mm_{\alpha} = 0$, for $\qq$-a.e.\;$\alpha$. For any Borel subset $A \subset Q$, consider the saturated set of $A$, 
$R(A) : = P_{2}(A \times X \cap R)$ and compute:
$$
\mu_{0}(R(A)) = \pi ((R(A) \times X) \cap \Gamma ) =  
\pi ( (X \times  R(A)) \cap \Gamma ) = \mu_{1}(R(A)).
$$
Hence for any Borel subset $A \subset Q$ 
$$
\int_{A} \int_{X_{\alpha}} f^{+}\mm_{\alpha}\qq(d\alpha) =
\int_{A} \int_{X_{\alpha}} f^{-}\mm_{\alpha}\qq(d\alpha)
$$
implying that for any Borel subset $A \subset Q$, $\int_{A} \int_{X_{\alpha}} f\, \mm_{\alpha} \qq(d\alpha) = 0$.
The claim is therefore proved.
\end{proof}

Using Theorem \ref{T:localization} it is routine to derive several geometric inequalities in their sharp 
form (following for instance \cite{CM2}). Here we simply report the Brunn-Minkowski inequality.

\begin{proposition}[Sharp timelike Brunn-Minkowski  inequality]\label{prop:BrunnMnk}
Let $(X,\sfd, \mm, \ll, \leq, \tau)$ be a timelike non-branching measured Lorentzian pre-length space satisfying  $\mathsf{TCD}^{e}_{p}(K,N)$, for some $K\in \R, N\in [1,\infty), p\in (0,1)$.
\\Let $A_{0}, A_{1}\subset X$ be measurable subsets with $\mm(A_{0}), \mm(A_{1})\in (0,\infty)$. Calling 
$\mu_{i}:=1/\mm(A_{i}) \, \mm\llcorner_{A_{i}}$, $i=1,2$, assume that $(\mu_{0},\mu_{1})$ 
is  timelike $1$-dualisable. 

Then
$$
\mm(A_{t})^{1/N}\geq \tau^{(1-t)}_{K,N} (\theta)\, \mm(A_{0})^{1/N} 
+  \tau^{(t)}_{K,N} (\theta ) \, \mm(A_{1})^{1/N}
$$
where  $A_{t}:=\cI(A_{0},A_{1},t)$ defined in \eqref{eq:defI(A,B,t)} is the set of $t$-intermediate points of geodesics from $A_{0}$ to $A_{1}$, and $\Theta$ is the maximal/minimal time-separation between points in $A_{0}$ and $A_{1}$, i.e.:
\begin{equation*}
\theta:=
\begin{cases}
\sup\{\tau(x_{0}, x_{1}): x_{0}\in A_{0}, x_{1}\in A_{1}\} & \text{if } K< 0,\\
\inf\{\tau(x_{0}, x_{1}): x_{0}\in A_{0}, x_{1}\in A_{1}\} \quad &\text{if } K\geq 0 .
\end{cases}
\end{equation*}
Finally $\tau^{(t)}_{K/N} (\theta ) : = t^{\frac{1}{N}}\sigma_{K/N}^{(t)}(\theta)^{\frac{N-1}{N}}$, where the volume distortion coefficients $\sigma_{K/N}^{(t)}(\theta)$ are defined in \eqref{eq:sigmakappa}.
\end{proposition}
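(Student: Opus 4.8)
The plan is to reduce the $N$-dimensional inequality to a family of one-dimensional statements along the transport rays furnished by \cref{T:localization}, following the well-established localization paradigm (as in \cite{CM2, klartag}). First I would dispose of a trivial reduction: since the statement is about $\mm(A_0), \mm(A_1), \mm(A_t)$, and $A_t = \cI(A_0,A_1,t)$, I would note that any $\ell_p$-optimal dynamical plan $\eta$ from $\mu_0$ to $\mu_1$ is concentrated on $\TGeo(X)$ (because $(\mu_0,\mu_1)$ is timelike $1$-dualisable, hence the transport happens along timelike geodesics), and the intermediate measure $(\ee_t)_\sharp\eta$ is concentrated on $A_t$; this gives the ``soft'' containment that will let the one-dimensional estimates sum up correctly. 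Then I would apply \cref{T:localization} with the choice $f := \frac{1}{\mm(A_0)}\ind_{A_0} - \frac{1}{\mm(A_1)}\ind_{A_1}$, which has zero $\mm$-mean, and whose positive and negative parts give exactly $\mu_0$ and $\mu_1$. This produces the decomposition $X = Z\cup\mathcal{T}$ with $\mm(Z\setminus\{f=0\})=0$, the disintegration $\mm\llcorner_{\mathcal{T}} = \int_Q \mm_\alpha\,\qq(d\alpha)$ with $\mm_\alpha = h(\alpha,\cdot)\L^1\llcorner_{X_\alpha}$, the balance condition $\int_{X_\alpha} f\,\mm_\alpha = 0$ for $\qq$-a.e.\ $\alpha$, and the crucial fact that each $(X_\alpha,|\cdot|,\mm_\alpha)$ satisfies the one-dimensional $\CD(K,N)$ condition \eqref{eq:DiffIneqCDKN}.

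Next I would run the one-dimensional argument ray by ray. Fix a ray $X_\alpha$ on which the balance condition and $\CD(K,N)$ both hold. The balance condition says $\mm_\alpha(A_0\cap X_\alpha)/\mm(A_0) = \mm_\alpha(A_1\cap X_\alpha)/\mm(A_1)$; call this common value $t_\alpha \ge 0$. Since $X_\alpha$ is (isometric to) an interval of $\R$ and $A_0\cap X_\alpha$, $A_1\cap X_\alpha$ lie on it with $A_0\cap X_\alpha$ entirely to the past of $A_1\cap X_\alpha$ (they are joined by timelike geodesics contained in the ray), the set of $t$-intermediate points $\cI(A_0\cap X_\alpha, A_1\cap X_\alpha, t)$ is again a subset of $X_\alpha$, and its $\mm_\alpha$-measure is controlled below by the one-dimensional $\CD(K,N)$ Brunn--Minkowski inequality for the weighted interval $(X_\alpha, |\cdot|, \mm_\alpha)$ — this is the classical consequence of \eqref{eq:DiffIneqCDKN}, i.e.\ the density $h(\alpha,\cdot)^{1/(N-1)}$ is $\fs_{K/(N-1)}$-concave, which upon integration yields
\begin{equation*}
\mm_\alpha\bigl(\cI(A_0\cap X_\alpha, A_1\cap X_\alpha, t)\bigr)^{1/N} \geq \tau^{(1-t)}_{K/N}(\theta_\alpha)\,\mm_\alpha(A_0\cap X_\alpha)^{1/N} + \tau^{(t)}_{K/N}(\theta_\alpha)\,\mm_\alpha(A_1\cap X_\alpha)^{1/N},
\end{equation*}
where $\theta_\alpha$ is the relevant (sup or inf, depending on $\sgn K$) time-separation within this ray, which is bounded by $\theta$ in the direction that makes $\sigma_{K/N}^{(\cdot)}$ monotone the right way. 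Using $\mm_\alpha(A_i\cap X_\alpha) = t_\alpha\,\mm(A_i)$ and $\cI(A_0\cap X_\alpha,A_1\cap X_\alpha,t)\subset A_t\cap X_\alpha$, this rearranges to $\mm_\alpha(A_t\cap X_\alpha) \ge t_\alpha^{1/N}\,\bigl[\tau^{(1-t)}_{K/N}(\theta)\mm(A_0)^{1/N}+\tau^{(t)}_{K/N}(\theta)\mm(A_1)^{1/N}\bigr]^{?}$ — more precisely, taking the $N$-th power, $\mm_\alpha(A_t\cap X_\alpha) \ge t_\alpha \cdot C(t)^N$ where $C(t)$ is the desired right-hand side.

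Finally I would integrate over $\alpha\in Q$ against $\qq$. Since $\mm(A_t) \ge \mm(A_t\cap\mathcal{T}) = \int_Q \mm_\alpha(A_t\cap X_\alpha)\,\qq(d\alpha)$, and $\int_Q t_\alpha\,\qq(d\alpha) = \int_Q \mm_\alpha(A_0\cap X_\alpha)/\mm(A_0)\,\qq(d\alpha) = \mm(A_0\cap\mathcal{T})/\mm(A_0) = 1$ (using $\mm(Z\setminus\{f=0\})=0$, so $A_0\subset\mathcal{T}$ up to $\mm$-null sets), we get $\mm(A_t) \ge C(t)^N = \bigl[\tau^{(1-t)}_{K/N}(\theta)\mm(A_0)^{1/N}+\tau^{(t)}_{K/N}(\theta)\mm(A_1)^{1/N}\bigr]^N$, which is the claim after taking $N$-th roots. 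I expect the main obstacle to be the careful bookkeeping in the one-dimensional step: one must verify that for $\qq$-a.e.\ ray the sets $A_0\cap X_\alpha$ and $A_1\cap X_\alpha$ are genuinely ``ordered'' along the ray and that their $t$-intermediate set within the ray is contained in the global $A_t$ — this uses the non-branching hypothesis and the structure of the $\ell$-cyclically monotone set $\Gamma_f$ built in \cref{T:localization} (pairs in $\Gamma_f$ are aligned along geodesics), together with the orientation/monotonicity of $\sigma_{K/N}^{(\cdot)}$ in $\theta$ that dictates the correct choice of sup vs.\ inf in the definition of $\theta$.
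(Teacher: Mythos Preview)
Your proposal is correct and is precisely the approach the paper intends: the paper's own proof consists of the single sentence that, once \cref{T:localization} is available, the argument follows verbatim \cite[Thm.\;3.1]{CM2}, and your outline (choose $f=\mm(A_0)^{-1}\ind_{A_0}-\mm(A_1)^{-1}\ind_{A_1}$, disintegrate, use the balance condition to get $\mm_\alpha(A_i\cap X_\alpha)=t_\alpha\,\mm(A_i)$, apply the one-dimensional $\CD(K,N)$ Brunn--Minkowski on each ray, and integrate using $\int_Q t_\alpha\,\qq(d\alpha)=1$) is exactly that proof. Your identification of the only delicate point---that $\cI(A_0\cap X_\alpha,A_1\cap X_\alpha,t)\subset A_t\cap X_\alpha$ because each $X_\alpha$ is itself a timelike geodesic of $X$, together with the monotonicity of $\sigma_{K/(N-1)}^{(\cdot)}(\theta)$ in $\theta$ to pass from $\theta_\alpha$ to the global $\theta$---is also the right one.
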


Once \cref{T:localization} is at disposal,  \cref{prop:BrunnMnk} can be proved following verbatim the proof of \cite[Thm.\;3.1]{CM2}.

\footnotesize{
\bibliographystyle{plain}
\bibliography{literature.bib}

\begin{thebibliography}{10}

\bibitem{ACKW09}
Farhan Abedin, Justin Corvino, Shelvean Kapita, and Haotian Wu.
\newblock On isoperimetric surfaces in general relativity. {II}.
\newblock {\em J. Geom. Phys.}, 59(11):1453--1460, 2009.

\bibitem{AFM-Inv}
Virginia Agostiniani, Mattia Fogagnolo, and Lorenzo Mazzieri.
\newblock Sharp geometric inequalities for closed hypersurfaces in manifolds
  with nonnegative {Ricci} curvature.
\newblock {\em Invent. Math.}, 222(3):1033--1101, 2020.

\bibitem{Anderson}
Michael~T. Anderson.
\newblock On long-time evolution in general relativity and geometrization of
  3-manifolds.
\newblock {\em Comm. Math. Phys.}, 222(3):533--567, 2001.

\bibitem{APPS-MathAnn}
Gioacchino Antonelli, Enrico Pasqualetto, Marco Pozzetta, and Daniele Semola.
\newblock Asymptotic isoperimetry on non collapsed spaces with lower {Ricci}
  bounds.
\newblock {\em Math. Ann.}, 389(2):1677--1730, 2024.

\bibitem{BS10}
Kathrin Bacher and Karl-Theodor Sturm.
\newblock Localization and tensorization properties of the curvature-dimension
  condition for metric measure spaces.
\newblock {\em J. Funct. Anal.}, 259(1):28--56, 2010.

\bibitem{Bahn99}
Hyoungsick Bahn.
\newblock Isoperimetric inequalities and conjugate points on {L}orentzian
  surfaces.
\newblock {\em J. Geom.}, 65(1-2):31--49, 1999.

\bibitem{BE99}
Hyoungsick Bahn and Paul Ehrlich.
\newblock A {B}runn-{M}inkowski type theorem on the {M}inkowski spacetime.
\newblock {\em Canad. J. Math.}, 51(3):449--469, 1999.

\bibitem{BK-MathAnn}
Zolt{\'a}n~M. Balogh and Alexandru Krist{\'a}ly.
\newblock Sharp isoperimetric and {Sobolev} inequalities in spaces with
  nonnegative {Ricci} curvature.
\newblock {\em Math. Ann.}, 385(3-4):1747--1773, 2023.

\bibitem{Bartnik-ACTA}
Robert Bartnik.
\newblock Regularity of variational maximal surfaces.
\newblock {\em Acta Math.}, 161(3-4):145--181, 1988.

\bibitem{Braun}
Mathias Braun.
\newblock R\'{e}nyi's entropy on {L}orentzian spaces. {T}imelike
  curvature-dimension conditions.
\newblock {\em J. Math. Pures Appl. (9)}, 177:46--128, 2023.

\bibitem{BraunMcCann}
Mathias Braun and Robert~J. McCann.
\newblock Causal convergence conditions through variable timelike ricci
  curvature bounds, 2024.

\bibitem{Bray:PenroseIneq}
Hubert~L. Bray.
\newblock Proof of the {R}iemannian {P}enrose inequality using the positive
  mass theorem.
\newblock {\em J. Differential Geom.}, 59(2):177--267, 2001.

\bibitem{Brendle-CPAM}
Simon Brendle.
\newblock Sobolev inequalities in manifolds with nonnegative curvature.
\newblock {\em Commun. Pure Appl. Math.}, 76(9):2192--2218, 2023.

\bibitem{cava:decomposition}
Fabio Cavalletti.
\newblock Decomposition of geodesics in the {W}asserstein space and the
  globalization problem.
\newblock {\em Geom. Funct. Anal.}, 24(2):493--551, 2014.

\bibitem{CaMaMo}
Fabio Cavalletti, Francesco Maggi, and Andrea Mondino.
\newblock Quantitative isoperimetry {\`a} la {Levy}-{Gromov}.
\newblock {\em Commun. Pure Appl. Math.}, 72(8):1631--1677, 2019.

\bibitem{CaMa-JEMS}
Fabio Cavalletti and Davide Manini.
\newblock Rigidities of isoperimetric inequality under nonnegative ricci
  curvature, 2024.

\bibitem{CMi}
Fabio Cavalletti and Emanuel Milman.
\newblock The globalization theorem for the curvature-dimension condition.
\newblock {\em Invent. Math.}, 226(1):1--137, 2021.

\bibitem{CM1}
Fabio Cavalletti and Andrea Mondino.
\newblock Sharp and rigid isoperimetric inequalities in metric-measure spaces
  with lower {R}icci curvature bounds.
\newblock {\em Invent. Math.}, 208(3):803--849, 2017.

\bibitem{CM2}
Fabio Cavalletti and Andrea Mondino.
\newblock Sharp geometric and functional inequalities in metric measure spaces
  with lower {R}icci curvature bounds.
\newblock {\em Geom. Topol.}, 21(1):603--645, 2017.

\bibitem{CaMo:20}
Fabio Cavalletti and Andrea Mondino.
\newblock Optimal transport in {L}orentzian synthetic spaces, synthetic
  timelike {R}icci curvature lower bounds and applications.
\newblock {\em Cambridge J. Math. (in press); Preprint arXiv:2004.08934}, 2020.

\bibitem{CaMo:22}
Fabio Cavalletti and Andrea Mondino.
\newblock A review of {L}orentzian synthetic theory of timelike {R}icci
  curvature bounds.
\newblock {\em Gen. Relativity Gravitation}, 54(11):Paper No. 137, 39, 2022.

\bibitem{Chav06}
Isaac Chavel.
\newblock {\em Riemannian geometry}, volume~98 of {\em Cambridge Studies in
  Advanced Mathematics}.
\newblock Cambridge University Press, Cambridge, second edition, 2006.
\newblock A modern introduction.

\bibitem{CG}
Piotr~T. Chru\'{s}ciel and James D.~E. Grant.
\newblock On {L}orentzian causality with continuous metrics.
\newblock {\em Classical Quantum Gravity}, 29(14):145001, 32, 2012.

\bibitem{CiLe}
Marco Cicalese and Gian~Paolo Leonardi.
\newblock A selection principle for the sharp quantitative isoperimetric
  inequality.
\newblock {\em Arch. Ration. Mech. Anal.}, 206(2):617--643, 2012.

\bibitem{DeGiorgi}
Ennio De~Giorgi.
\newblock Sulla propriet\'a isoperimetrica dell’ipersfera, nella classe degli
  insiemi aventi frontiera orientata di misura finita.
\newblock {\em Atti Accad. Naz. Lincei Mem. Cl. Sci. Fis. Mat. Nat., Sez. I,},
  8:33--44, 1958.

\bibitem{DeGiorgi2}
Ennio De~Giorgi.
\newblock Selected papers, ({L}. {A}mbrosio, {G}. {D}al {M}aso, {M}. {F}orti
  and {S}. {S}pagnolo eds.,).
\newblock {\em Springe-Verlag, New York}, 2005.

\bibitem{EhrlichSanchez}
Paul Ehrlich and Miguel S\'anchez.
\newblock Some semi-{R}iemannian volume comparison theorems.
\newblock {\em Tohoku Math. J.}, 52:331–348, 2000.

\bibitem{EKS}
Matthias Erbar, Kazumasa Kuwada, and Karl-Theodor Sturm.
\newblock On the equivalence of the entropic curvature-dimension condition and
  {B}ochner's inequality on metric measure spaces.
\newblock {\em Invent. Math.}, 201(3):993--1071, 2015.

\bibitem{ESA}
{European Space Agency, Science and Tecnology}.
\newblock Simple but challenging: the universe according to {P}lanck.
\newblock {\em
  https://sci.esa.int/web/planck/-/51551-simple-but-challenging-the-universe-according-to-planck},
  2023.

\bibitem{FiMP}
Alessio Figalli, Francesco Maggi, and Aldo Pratelli.
\newblock A mass transportation approach to quantitative isoperimetric
  inequalities.
\newblock {\em Invent. Math.}, 182(1):167--211, 2010.

\bibitem{FischerMoncrief}
Arthur~E. Fischer and Vincent Moncrief.
\newblock Hamiltonian reduction and perturbations of continuously self-similar
  {$(n+1)$}-dimensional {E}instein vacuum spacetimes.
\newblock {\em Classical Quantum Gravity}, 19(21):5557--5589, 2002.

\bibitem{Flaim}
Marco Flaim.
\newblock An isoperimetric-type inequality in {L}orentzian space-times using
  geometric flows.
\newblock {\em Master Thesis at Universit\"at T\"ubingen and Universit\'a di
  Trento}, 2022.

\bibitem{FuMP}
Nicola Fusco, Francesco Maggi, and Aldo Pratelli.
\newblock The sharp quantitative isoperimetric inequality.
\newblock {\em Ann. Math. (2)}, 168(3):941--980, 2008.

\bibitem{Ga}
Gregory~J. Galloway.
\newblock Curvature, causality and completeness in space-times with causally
  complete spacelike slices.
\newblock {\em Math. Proc. Cambridge Philos. Soc.}, 99(2):367--375, 1986.

\bibitem{GrafSormani}
Melanie Graf and Christina Sormani.
\newblock Lorentzian area and volume estimates for integral mean curvature
  bounds.
\newblock In {\em Developments in Lorentzian Geometry}, pages 105--128, Cham,
  2022. Springer International Publishing.

\bibitem{Gromov}
Misha Gromov.
\newblock {\em Metric structures for {R}iemannian and non-{R}iemannian spaces}.
\newblock Modern Birkh\"{a}user Classics. Birkh\"{a}user Boston, Inc., Boston,
  MA, english edition, 2007.
\newblock Based on the 1981 French original.

\bibitem{HeintzeKarcher}
Ernst Heintze and Hermann Karcher.
\newblock A general comparison theorem with applications to volume estimates
  for submanifolds.
\newblock {\em Ann. Sci. \'Ecole Norm. Sup.}, 11(4):451–470, 1978.

\bibitem{Huisken}
Gerhard Huisken.
\newblock An isoperimetric concept for mass and quasilocal mass.
\newblock {\em Report 2/2006 – Mathematical aspects of general relativity,
  Oberwolfach Rep. 3, European Mathematical Society, Z\"urich}, pages 87--88,
  2006.

\bibitem{HuiskenVideo}
Gerhard Huisken.
\newblock An isoperimetric concept for the mass in general relativity.
\newblock {\em https://video.ias.edu/node/234}, 2009.

\bibitem{Huisken-Ilmanen}
Gerhard Huisken and Tom Ilmanen.
\newblock The inverse mean curvature flow and the {R}iemannian {P}enrose
  inequality.
\newblock {\em J. Differential Geom.}, 59(3):353--437, 2001.

\bibitem{klartag}
Bo'az Klartag.
\newblock Needle decompositions in {R}iemannian geometry.
\newblock {\em Mem. Amer. Math. Soc.}, 249(1180):v+77, 2017.

\bibitem{CausalSpace}
E.~H. Kronheimer and R.~Penrose.
\newblock On the structure of causal spaces.
\newblock {\em Proc. Cambridge Philos. Soc.}, 63:481--501, 1967.

\bibitem{KS}
Michael Kunzinger and Clemens S\"{a}mann.
\newblock Lorentzian length spaces.
\newblock {\em Ann. Global Anal. Geom.}, 54(3):399--447, 2018.

\bibitem{LambSche}
Ben Lambert and Julian Scheuer.
\newblock Isoperimetric problems for spacelike domains in generalized
  {R}obertson-{W}alker spaces.
\newblock {\em J. Evol. Equ.}, 21(1):377--389, 2021.

\bibitem{Levy}
Paul L\'evy.
\newblock {\em Probl\`ems concretes d'analyse fonctionnelle}.
\newblock Paris, 1951.

\bibitem{LottCollapsing}
John Lott.
\newblock Collapsing in the {E}instein flow.
\newblock {\em Ann. Henri Poincar\'{e}}, 19(8):2245--2296, 2018.

\bibitem{LottInitial}
John Lott.
\newblock On the initial geometry of a vacuum cosmological spacetime.
\newblock {\em Classical Quantum Gravity}, 37(8):085017, 29, 2020.

\bibitem{lottvillani}
John Lott and C\'{e}dric Villani.
\newblock Ricci curvature for metric-measure spaces via optimal transport.
\newblock {\em Ann. of Math. (2)}, 169(3):903--991, 2009.

\bibitem{McCann}
Robert~J. McCann.
\newblock Displacement convexity of {B}oltzmann's entropy characterizes the
  strong energy condition from general relativity.
\newblock {\em Camb. J. Math.}, 8(3):609--681, 2020.

\bibitem{MilmanJEMS}
Emanuel Milman.
\newblock Sharp isoperimetric inequalities and model spaces for
  curvature-dimension diameter condition.
\newblock {\em J. Eur. Math. Soc.}, 17(5):1041--1078, 2015.

\bibitem{Minguzzi:23}
Ettore Minguzzi.
\newblock Further observations on the definition of global hyperbolicity under
  low regularity.
\newblock {\em Classical Quantum Gravity}, 40(18):Paper No. 185001, 9, 2023.

\bibitem{MondinoSpadaro}
Andrea Mondino and Emanuele Spadaro.
\newblock On an isoperimetric-isodiametric inequality.
\newblock {\em Analysis \& PDE}, 10(1):95–126, 2017.

\bibitem{MoSu}
Andrea Mondino and Stefan Suhr.
\newblock An optimal transport formulation of the {E}instein equations of
  general relativity.
\newblock {\em J. Eur. Math. Soc. (JEMS)}, 25(3):933--994, 2023.

\bibitem{Ohta1}
Shin-ichi Ohta.
\newblock On the measure contraction property of metric measure spaces.
\newblock {\em Comment. Math. Helv.}, 82(4):805--828, 2007.

\bibitem{Osserman}
Robert Osserman.
\newblock The isoperimetric inequality.
\newblock {\em Bull. Am. Math. Soc.}, 84(6):1182–1238, 1978.

\bibitem{Ros}
Antonio Ros.
\newblock The isoperimetric problem.
\newblock {\em Lecture series at the Clay Mathematics Institute, Summer School
  on the Global Theory of Minimal Surfaces, MSRI, Berkeley, California}, 2001.

\bibitem{SaC0}
Clemens S\"{a}mann.
\newblock Global hyperbolicity for spacetimes with continuous metrics.
\newblock {\em Ann. Henri Poincar\'{e}}, 17(6):1429--1455, 2016.

\bibitem{Sbierski}
Jan Sbierski.
\newblock The {{\(C^0\)}}-inextendibility of the {Schwarzschild} spacetime and
  the spacelike diameter in {Lorentzian} geometry.
\newblock {\em J. Differ. Geom.}, 108(2):319--378, 2018.

\bibitem{Srivastava}
S.~M. Srivastava.
\newblock {\em A course on {B}orel sets}, volume 180 of {\em Graduate Texts in
  Mathematics}.
\newblock Springer-Verlag, New York, 1998.

\bibitem{Steiner}
Jacob Steiner.
\newblock Einfacher beweis der isoperimetrischen haupts\"atze.
\newblock {\em J. reine angew Math.}, 18:281--296, 1838.

\bibitem{sturm:II}
Karl-Theodor Sturm.
\newblock On the geometry of metric measure spaces. {II}.
\newblock {\em Acta Math.}, 196(1):133--177, 2006.

\bibitem{TreudeGrant}
Jan-Hendrik Treude and James D.~E. Grant.
\newblock Volume comparison for hypersurfaces in {L}orentzian manifolds and
  singularity theorems.
\newblock {\em Ann. Global Anal. Geom.}, 43(3):233--251, 2013.

\bibitem{TsaiWang}
Chung-Jun Tsai and Kai-Hsiang Wang.
\newblock An isoperimetric-type inequality for spacelike submanifold in the
  {M}inkowski space.
\newblock {\em Int. Math. Res. Not. IMRN}, (1):128--139, 2022.

\bibitem{villani:oldandnew}
C\'{e}dric Villani.
\newblock {\em Optimal transport}, volume 338 of {\em Grundlehren der
  mathematischen Wissenschaften [Fundamental Principles of Mathematical
  Sciences]}.
\newblock Springer-Verlag, Berlin, 2009.
\newblock Old and new.

\bibitem{Wald}
Robert~M. Wald.
\newblock {\em General relativity}.
\newblock University of Chicago Press, Chicago, IL, 1984.

\end{thebibliography}
}
\end{document}